  \titlespacing*{\section} {0pt}{1em}{2.3ex plus .2ex}
\titlespacing*{\subsection}{0pt}{1em}{2.3ex plus .2ex}
\newtheorem{thm}{Theorem}[section]
\newtheorem{cor}[thm]{Corollary}
\newtheorem{lem}[thm]{Lemma}
\newtheorem{prop}[thm]{Proposition}
\theoremstyle{remark}
\newtheorem{rem}{Remark}[section]
\theoremstyle{definition}
\newtheorem{defn}{Definition}[section]
\newtheorem{example}[thm]{Example}
\numberwithin{equation}{section}
\numberwithin{figure}{section}
\font\nt=cmr7
\def\note#1
\newcommand{\di}{\partial}
\newcommand{\ra}{\rightarrow}
\def\lra{\longrightarrow}
\def\ssk{\smallskip}
\def\msk{\medskip}
\def\bsk{\bigskip}
\def\nin{\noindent}
\newcommand{\diam}{\operatorname{diam}}
\newcommand{\dist}{\operatorname{dist}}
\newcommand{\id}{\operatorname{id}}
\newcommand{\Id}{\operatorname{Id}}
\newcommand{\const}{\mathrm{const}}
\newcommand{\eps}{{\varepsilon}}
\newcommand{\de}{{\delta}}
\newcommand{\la}{{\lambda}}
\newcommand{\La}{{\Lambda}}
\newcommand{\si}{{\sigma}}
\newcommand{\bde}{{\boldsymbol{\delta}}}
\newcommand{\oeps}{{\overline\eps}}
\newcommand{\CC}{{\mathcal C}}
\newcommand{\II}{{\mathcal I}}
\newcommand{\NN}{{\mathcal N}}
\newcommand{\OO}{{\mathcal O}}
\newcommand{\TT}{{\mathcal T}}
\newcommand{\N}{{\mathbb N}}
\newcommand{\R}{{\mathbb R}}
\newcommand{\vv}{{\mathbf v}}
\newcommand{\cc}{{\mathbf c}}
\newcommand{\ww}{{\mathbf w}}
\def\Bz{{\mathbf{z}}}
\def\Bd{{\mathbf{d}}}
\def\Bu{{\mathbf{u}}}
\def\Bq{{\mathbf{q}}}
\def\Bp{{\mathbf{p}}}
\def\Br{{\mathbf{r}}}
\def\BPhi{{\boldsymbol{\BPhi}}}
\def\B0{{\mathbf{0}}}
\def\BR{{\mathbf{R}}}
\newcommand{\Jac}{\operatorname{Jac}}
\newcommand{\Dom}{\operatorname{Dom}}
\def\Empty{}
\newcommand\oplabel[1]{
  \def\OpArg{#1} \ifx \OpArg\Empty {} \else
  	\label{#1}
  \fi}
\newcommand{\comm}[1]{}
\newcommand{\comment}[1]{}
\begin{document}

\bigskip\bigskip

\title[H\'enon renormalization]{H\'enon renormalization in arbitrary dimension : Invariant space under renormalization operator}
\author{Young Woo Nam}

\address {Young Woo Nam \\ \quad e-mail : namyoungwoo@hongik.ac.kr \quad ellipse7\,@\ daum.net}


\date{June 23, 2015}

\begin{abstract}
Infinitely renormalizable H\'enon-like map in arbitrary finite dimension 
is considered. The set, $ \NN $ of infinitely renormalizable H\'enon-like maps satisfying the certain condition is invariant under renormalization operator. The Cantor attractor of infinitely renormalizable H\'enon-like map, $ F $ in $ \NN $ has {\em unbounded geometry} almost everywhere in the parameter space of the universal number which corresponds to the average Jacobian of two dimensional map. This is the extension of the same result in $ \NN $ for three dimensional infinitely renormalizable H\'enon-like maps.
\end{abstract}

\maketitle

\thispagestyle{empty}

\setcounter{tocdepth}{1}
\tableofcontents

\renewcommand{\labelenumi} {\rm {(}\arabic{enumi}{)}}
\renewcommand{\CancelColor}{ \color{blue} }

\section{Introduction}

The universality of one dimensional dynamical system was discovered by Feigenbaum and independently by Coullet and Tresser in the mid 1970's and universality of higher dimensional maps is conjectured by Coullet and Tresser in \cite{CT}. 
Hyperbolicity at the fixed point of renormalization operator is finally proved by Lyubich in \cite{Lyu} using quadratic-like maps in one dimensional holomorphic dynamical systems. 
The similar universal properties are expected in higher dimensional maps which are strongly dissipative and close to the one dimensional maps. In particular, renormalizable maps with {\em periodic doubling type} are interesting in two or higher dimension. Universality of two dimensional strongly dissipative infinitely renormalizable H\'enon-like maps is justified in \cite{CLM}. 
Cantor attractor of two dimensional H\'enon-like maps is the counterpart of that of one dimensional maps but it has different small scale geometric properties. In particular, it has non rigidity and unbounded geometry. These geometric properties are generalized in special classes of the highly dissipative three dimensional H\'enon-like family in \cite{Nam2, Nam3}.

\comm{*******
\subsection{Renormalization of unimodal maps}
In the one dimensional map in the interval, the renormalizability is defined as follows in general.
\begin{defn}
Let $ f $ be the unimodal map on the interval $ I $ and $ c $ be the critical point of $ f $. 
If there exists a proper subinterval $ J $ of $ I $ such that $ c \in J $ and $ f^{n}|_{\,J} \subset J $ for some $ n \geq 2 $ and $ f^i(J) \cap J = \varnothing $ for all positive $ i < n $, then we call $ f $ is {\em renormalizable}.
\end{defn} 
\nin The periodic doubling renormalization operator was introduced to study the small scale geometry of the attractor of the family of unimodal maps with the single critical point $ c $ which is quadratic, that is, $ f''(c) \neq 0 $. For example, the family of quadratic map with parameter $ \la $, $ x \mapsto \lambda x(1-x) $ can be considered. Let us define the {\em periodic doubling} renormalization operator of the one dimensional map, $ f $ on the interval, $ I $. 
\begin{defn}[Renormalization of periodic doubling type]
$ f $ is renormalizable if it has two disjoint subintervals which are exchanged by $ f $. 
\end{defn}
Let the two smallest disjoint intervals which are exchanged by $ f $ be $ \CC_1 = \{ I^1_0, I^1_1 \} $ where $ I^1_0 $ contains the critical point $ c $ and $ I^1_1 $ contains the critical value $ v $. The rescaled map of the first return map 
$$ f^2 : I^1_0 \ra I^1_0 $$
with affine conjugation defines the renormalization operator $ R_c $. Similarly. the operator $ R_v $ is defined on $ I^1_1 $. If $ f $ is infinitely renormalizable, then the $ n^{th} $ renormalized map of $ f $, $ R^nf $ has the cycle of the pairwise disjoint intervals
$$ \CC_n = \{ I^n_i \; | \ i=0,1,2, \ldots , 2^n -1 \} $$
where $ f(I^n_i) = I^n_{i+1} $ and 
$$ \bigcup\, \CC_{n+1} \subset \bigcup\, \CC_n . $$
The nested sequence of $ \CC_n $ implies the Cantor set is the attractor of $ f $.
$$ \CC = \bigcap \bigcup \CC_n $$

\nin The topological properties of unimodal maps with Cantor attractor is deeply affected by the orbit of the critical point, which lead to the kneading sequence. If the given map $ f $ is infinitely renormalizable, then $ f $ acts on the dyadic adding machine on this attractor. For the introduction of dyadic adding machine and kneading sequence, see \cite{BB}. The universality of renormalizable map says the small scale geometry of two maps has asymptotically same around the renormalization fixed point. The rigidity means that if two infinitely renormalizable maps, $ f $ and $ g $ are conjugated by a homeomorphism, $ h $ on the domain of two maps, that is, 
$$ h \circ f = g \circ h $$
then $ h $ is differentiable on the Cantor attractor. Moreover, de Melo and Pinto proved one dimensional infinitely renormalizable maps have the rigidity in \cite{dMP}. \ssk \\
The {\em topology} of the dynamical system implies the {\em geometry} of it.
***********************}

\msk

\subsection{H\'enon maps and bifurcation of the homoclinic tangency}
H\'enon map is a polynomial diffeomorphism from $ \R^2 $ to itself as follows
$$ H_{a,\,b}(x,y) = (1- ax^2 + y,\ bx) . $$
A famous conjecture is the existence of {\em strange attractor} at the parameter $ a=1.4 $ and $ b=0.3 $. The first significant achievement about the H\'enon map with parameter space $ (a,b) $ was done by Benedics and Carleson in \cite{BC}. There exist strange attractors for the positive density of the parameter space, $ (a,b) $ such that $ a_0 < a < 2 $ and $ b < b_0 $ where $ a_0 $ is close to $ 2 $ and $ b_0 $ is small. This parameter values which are considered in \cite{BC} is a generalization of one dimensional Misiuriewicz maps in \cite{Jak}. Jakobson proved that the maps $ 1- ax^2 $ which have absolutely continuous invariant measure with respect to Lebegue measure has positive density on the parameter space. So is in the H\'enon family in \cite{BC}. Wang and Young used the certain geometric conditions to generalize H\'enon family in \cite{WY1} with statistical properties. 
Furthermore, it is generalized to arbitrary finite dimension in \cite{WY2} with the rank one attractor, that is, attractor with the neutral or repulsive direction is one dimensional. 
\ssk \\ 
\comm{******************
\nin The hyperbolic systems have been studied from 1960s. Moreover, those systems were expected to be generic in the whole dynamical systems. This generic hyperbolicity is the main conjecture of the rational maps on the Riemann sphere. In the quadratic polynomial case, it is known that MLC (Mandelbrot set is locally connected) conjecture is equivalent to the generic hyperbolicity. However, in the dynamical systems of two or higher dimensional maps Newhouse proved that the maps in the chaotic region contains an open set. The proof used a perturbation of homoclinic tangency with certain condition of the invariant Cantor set. After Newhouse proof, Palis suggested a new conjecture about the generic dynamical system at p.134 in \cite{PT}. 
\begin{conj}[Palis conjecture]
Every $ C^r $ diffeomorphism in Diff $(M)$ for $ r \geq 1 $ can be approximated by a hyperbolic diffeomorphism or else by one exhibiting a homoclinic bifurcation involving a homoclinic tangency or a cycle of hyperbolic periodic saddles with different indices.
\end{conj}
***************}
\nin Let us consider a homoclinic tangency of two dimensional maps. Then the dimension of both unstable and stable manifold at the homoclinic point is one. After bifurcation of homoclinic tangency, stable and unstable manifold are (transversally) intersected around the homoclinic point. Let us choose a bounded region on which this bifurcation occurs and consider the first return map, $ H $. Then after appropriate smooth coordinate change, the image of horizontal lines in the bounded region is the vertical lines. Then we obtain that the simplest non-linear map which satisfies the above properties, which is called H\'enon map 
$$ H_{a,\: b}(x,y) = (x^2 - a + by,\ x) . $$
However, in general the first coordinate map of the first return map is not a polynomial but is a perturbation of a unimodal one dimensional map, say $ f(x) $. Then the first return map is of the following form
$$ F(x,y) = (f(x) - \eps(x,y),\ x) $$
where $ f(x) $ is a unimodal map. We call this map {\em H\'enon-like map}. 
Moreover, a perturbation of homoclinic tangency could occur in higher dimension. 
In order to make that the first return map has H\'enon-like form in the first two coordinates, let us assume that dimension of unstable manifold at the homoclinic point is one. Then the first return map in higher dimension has first two coordinates similar to the two dimensional H\'enon-like map after smooth coordinate change which flattens the stable manifold at the homoclinic point in the bounded region 
$$ F(x,y,z_1,z_2,\ldots,z_m) = (f(x) - \eps(x,y,z_1,z_2,\ldots,z_m),\ x,\ \bullet \,) $$
where $ f(x) $ is a unimodal map. If the  maximal backward invariant set has only one dimensional neutral or unstable direction, then it is called {\em rank one attractor}. This viewpoint is reflected in the paper of Wang and Young, \cite{WY2} for the maps in higher dimension on the chaotic region in the parameter space. 

\msk
\subsection{Statement of result}
Period doubling renormalization of analytic H\'enon-like maps was constructed on \cite{CLM} by de Carvalho, Lyubich and Martens. The set of renormalizable H\'enon-like maps is a counterpart of the set of H\'enon maps in chaotic region. 
Moreover, we expect the map $ F $ has no homoclinic tangency between the stable and unstable manifolds of the regular fixed point. 
\comm{********************
H\'enon map is a polynomial diffeomorphism in two dimension
$$ F(x,y) = (1-ax^2 + by,\ x) $$ 
for $ a $ and $ b $ are real numbers. If $ b $ is a small enough positive number, then $ F $ is a kind of a small perturbation of one dimensional quadratic polynomial $ x \mapsto 1 -ax^2 $. 
If the dynamical system has the perturbation of homoclinic tangency, then the first return map can be H\'enon-like map after appropriate smooth coordinate change as follows
\begin{equation*}
F(x,y) = (f(x) - \eps(x,y),\ x).
\end{equation*}
H\'enon renormalization was introduced in \cite{CLM} with universal limit of infinitely renormalizable H\'enon-like maps. 
********************************}
H\'enon renormalization is extended to three dimensional H\'enon-like maps in \cite{Nam1}. Geometric property of the Cantor attractor, for example, {\em unbounded geometry} of infinitely renormalizable dimensional H\'enon-like maps was extened in certain invariant space of three dimensional maps under renormalization operator in \cite{Nam2, Nam3}. A particular set of three dimensional H\'enon-like maps, say $ \NN $ in which every H\'enon-like map, say $ F $, satisfies the following condition \ssk
\begin{equation} \label{eq-invariant space NN in three dimension}
\begin{aligned}
\di_y \de \circ (F(x,y,z)) + \di_z \de \circ (F(x,y,z)) \cdot \di_x \de(x,y,z) \equiv 0 .
\end{aligned} \ssk
\end{equation}
where $ F(x,y,z) = (f(x) - \eps(x,y,z),\ x,\ \de(x,y,z)) $. Denote the set of infinitely renormalizable H\'enon-like maps by $ \II_B(\oeps) $ where $ B $ is the domain of H\'enon-like maps. Then $ \NN \cap \II_B(\oeps) $ was found as an invariant space under renormalization operator in \cite{Nam2}.
 \ssk \\
H\'enon renormalization of period doubling type in arbitrary finite dimension is defined in \cite{Nam4} and a space invariant under renormalization operator. H\'enon-like map in higher dimension is defined as follows \msk
\begin{equation*}
\begin{aligned}
F(x,y,\Bz) = (f(x) - \eps(x,y,\Bz),\ x,\ \bde(x,y,\Bz))
\end{aligned} \msk
\end{equation*}
where $ \Bz = (z_1, z_2,\ldots,z_m) $ and $ \bde = (\de^1, \de^2, \de^3, \ldots , \de^m) $ is a map from the $ m+2 $ dimensional hypercube $ B $ to $ \R^m $ for $ m \geq 1 $. H\'enon-like map is generalized in higher dimension as a model of the first return map of perturbation of the {homoclinic tangency} of {\em sectional dissipative map} at fixed points. In this paper, we find the invariant space which is the extension of the space $ \NN \cap \II_B(\oeps) $\ in arbitrary finite dimension and prove the unnbounded geometry of Cantor attractor of each element in $ \NN \cap \II_B(\oeps) $. Many parts of this paper are self contained although most ideas and notations are the same as those in \cite{Nam2}. 
\msk
\begin{defn}[Definition \ref{def-definition of NN}]
Let $ \NN $ be the set of $ m+2 $ dimensional renormalizable H\'enon-like maps such that each map $ F \in \NN $ satisfies the following equation
$$ \di_y \de^j \circ F(w) + \sum_{l=1}^m \di_{z_l} \de^j \circ F(w) \cdot \di_x \de^l(w) = 0 $$
where $ w = (x,y,\Bz) \in \Dom(F) $ and $ F(w) = \big(\,f(x) - \eps(w),\ x,\ \de^1(w),\,\de^2(w),\,\ldots,\, \de^m(w) \,\big) $ for all\, $ 1 \leq j \leq m $.
\end{defn}

\begin{thm}[Theorem \ref{thm - invariant space NN}]
The space $ \NN \cap \II_B(\oeps) $ is invariant under renormalization, that is, if $ F \in \NN \cap \II_B(\oeps) $, then $ RF \in \NN \cap \II_B(\oeps) $.
\end{thm}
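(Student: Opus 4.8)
The statement splits into two parts, and $\II_B(\oeps)$-invariance is the routine one: infinite renormalizability is preserved by $R$ essentially by definition, and the size constraint defining $\II_B(\oeps)$ propagates because the a priori estimates on $R$ recalled in the earlier sections (and in \cite{Nam4}) bound the new perturbation quadratically in the old one. So the real content is to show that $F\in\NN$ forces $RF\in\NN$. Write $\pi_1,\pi_2,\pi_{\Bz}$ for the projections onto the first, the second, and the last $m$ coordinates. For a H\'enon-like map $K(x,y,\Bz)=(\phi_K(x,y,\Bz),\,x,\,\bg(x,y,\Bz))$ with $\bg=(\gamma^1,\dots,\gamma^m)$, a one-line chain-rule computation — using that the second coordinate of $K$ is $x$ and the last $m$ coordinates are $\bg$, so the first column of $DK$ is $(\di_x\phi_K,\,1,\,\di_x\bg)$ and the $\Bz$-block of $DK$ is $D\bg$ — shows that the defining equation of $\NN$ for $K$ is equivalent to
\[
\di_x\big(\pi_{\Bz}\circ K^{2}\big)=\big((\di_x\bg)\circ K\big)\cdot\di_x\big(\pi_1\circ K\big),
\]
equivalently to $\pi_{\Bz}\!\big(DK(K(w))\cdot(0,1,\di_x\bg(w))\big)=0$ for every $w$.

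The engine of the argument is that for $F\in\NN$ this relation iterates: for every $n\ge 1$,
\[
\di_x\big(\pi_{\Bz}\circ F^{n+1}\big)(w)=\di_x\big(\pi_1\circ F^{n}\big)(w)\cdot(\di_x\bde)\big(F^{n}(w)\big)\qquad(\ast_n)
\]
on $\Dom(F^{n+1})$. I would prove $(\ast_n)$ by induction on $n$. Set $\Theta_n(w):=\big(0,\ \di_x(\pi_1\circ F^{n-1})(w),\ \di_x(\pi_{\Bz}\circ F^{n})(w)\big)$; differentiating $\pi_{\Bz}\circ F^{n+1}=\bde\circ F^{n}$ in $x$ and using $\pi_2\circ F^{n}=\pi_1\circ F^{n-1}$ (so the first column of $DF^{n}(w)$ is $\di_x(\pi_1\circ F^{n})(w)\,e_1+\Theta_n(w)$, with $e_1=(1,0,\dots,0)$, and noting the $\Bz$-block of $DF$ is $D\bde$) gives the purely formal identity
\[
\di_x(\pi_{\Bz}\circ F^{n+1})(w)=\di_x(\pi_1\circ F^{n})(w)\,(\di_x\bde)(F^{n}(w))+\pi_{\Bz}\big(DF(F^{n}(w))\,\Theta_n(w)\big).
\]
Thus $(\ast_n)$ is equivalent to $\pi_{\Bz}\big(DF(F^{n}(w))\,\Theta_n(w)\big)=0$; for $n=1$ this is exactly $F\in\NN$ since $\Theta_1(w)=(0,1,\di_x\bde(w))$. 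Assuming $(\ast_n)$, the displayed identity forces $\Theta_{n+1}(w)=\di_x(\pi_1\circ F^{n})(w)\cdot\Theta_1(F^{n}(w))$, whence the $n=1$ case applied at the point $F^{n}(w)$ yields $\pi_{\Bz}\big(DF(F^{n+1}(w))\,\Theta_{n+1}(w)\big)=0$, i.e.\ $(\ast_{n+1})$.

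To conclude, write $RF=\Lambda\circ H\circ F^{2}\circ H^{-1}\circ\Lambda^{-1}$ from the construction of $R$, where $H$ is the horizontal coordinate change (it fixes the $y$- and $\Bz$-coordinates and has first component $\pi_1\circ F$) and $\Lambda$ is the diagonal affine rescaling. A direct covariance check shows that conjugation by such a $\Lambda$ sends H\'enon-like maps to H\'enon-like maps and preserves the $\NN$-relation, so it suffices to prove $G:=H\circ F^{2}\circ H^{-1}\in\NN$, and $G$ is H\'enon-like. Since $\pi_{\Bz}\circ H=\pi_{\Bz}$ and $\pi_1\circ H=\pi_1\circ F$, one gets $\pi_{\Bz}\circ G=(\bde\circ F)\circ H^{-1}$, $\ \pi_1\circ G=(\pi_1\circ F^{3})\circ H^{-1}$, $\ \pi_{\Bz}\circ G^{2}=(\bde\circ F^{3})\circ H^{-1}$, where $H^{-1}$ changes only the first coordinate, sending $(x,y,\Bz)$ to $(v,y,\Bz)$ with $\pi_1F(v,y,\Bz)=x$, so $\di_x v=1/\di_x(\pi_1F)(v,y,\Bz)$. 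Substituting these into the composition form of the $\NN$-condition for $G$, differentiating through $H^{-1}$ and tracking the $\di_x(\pi_1F)$-factors so that they cancel, using $F\in\NN$ once to rewrite $\di_x(\pi_{\Bz}\circ G)$, and observing that $H^{-1}\big(G(x,y,\Bz)\big)=F^{2}(v,y,\Bz)$ — which follows from the H\'enon-like form of the intermediate maps together with injectivity of $\pi_1\circ F$ in the first variable — the identity to be verified for $G$ reduces to precisely the $n=3$ instance $(\ast_3)$, evaluated at $(v,y,\Bz)$. Since $(\ast_3)$ holds, $G\in\NN$; hence $RF\in\NN$, which together with the first half gives the theorem.

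The laborious part is this last step: propagating the implicitly defined nonlinear change $H^{-1}$ through the first partial derivatives of the components of $RF$ and of $(RF)^{2}$, making the $\di_x(\pi_1F)$-factors cancel, and — the single delicate identification — pinning down $H^{-1}(G(\cdot))$ so that the target equation collapses onto $(\ast_3)$ rather than onto an intractable expression; one must also match domains, since the $\NN$-relation is required on $\Dom(RF)$, which pulls back under $H^{-1}$ precisely to the locus where $(\ast_3)$ is invoked. The reformulation and the induction, by contrast, are short and essentially mechanical.
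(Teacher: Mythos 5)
Your argument is correct in substance, but it organizes the computation quite differently from the paper. The paper proves $F_{k-1}\in\NN\Rightarrow F_k\in\NN$ by first deriving recursive formulas for the blocks $X_k,Y_k,Z_k$ of $D\bde_k$ in terms of $X_{k-1},Y_{k-1},Z_{k-1}$ composed with $\psi^k_v,\psi^k_c$ (Lemma \ref{lem-recursive formula of de-k}), and then cancelling the $\Bq_{k-1}$-terms by means of the semiconjugacy relations $\psi^k_v\circ F_k=F_{k-1}\circ\psi^k_c$ and $\pi_y\circ\psi^k_v\circ F_k=\pi_x\circ\psi^k_c$ (Lemma \ref{lem-relation of F-k and F-k-1 with psi}); the hypothesis on $F_{k-1}$ is then invoked at the single point $\psi^k_c(w)$. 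You instead iterate the condition along orbits of $F$ itself: the identity $(\ast_n)$, driven by the cocycle relation $\Theta_{n+1}(w)=\di_x(\pi_1\circ F^{n})(w)\cdot\Theta_1(F^{n}(w))$, is a clean dynamical reformulation of membership in $\NN$, and the transport through $H$ and $\La$ reduces the condition for $RF$ to $(\ast_3)$ at $H^{-1}(u)$ together with $(\ast_1)$ at $F^{2}(H^{-1}(u))$. What your route buys is that the one nontrivial cancellation in the paper's computation --- the $\Bq$-terms coming from the normalization $\bde(y,f^{-1}(y),\B0)$ built into $H$ --- becomes automatic, because in your reformulated identity they enter both sides as the identical quantity $\Bq(\pi_x G(u))\cdot\di_x(\pi_1 G)(u)$.

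Two points need repair. First, it is not true that $\pi_{\Bz}\circ H=\pi_{\Bz}$, nor that $H^{-1}$ changes only the first coordinate: by definition $H(x,y,\Bz)=(f(x)-\eps(w),\,y,\,\Bz-\bde(y,f^{-1}(y),\B0))$, so in fact $\pi_{\Bz}\circ G(u)=\bde(F(H^{-1}(u)))-\bde(\pi_x(u),f^{-1}(\pi_x(u)),\B0)$, and similarly $\pi_{\Bz}\circ G^{2}$ carries a correction term $-\bde(\pi_x(G(u)),\ldots)$. Your stated formulas omit these corrections. The omission happens to be harmless --- since $\pi_x\circ G=\pi_1\circ G$, the corrections contribute the same term $-\Bq(\pi_x G(u))\,\di_x(\pi_1G)(u)$ to both sides of the identity you verify, so the identity you prove is equivalent to the true one --- but as written the intermediate formulas are false, and the cancellation should be made explicit rather than presupposed. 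Second, the domain matching you defer is genuinely part of the statement, because Definition \ref{def-definition of NN} imposes the relation only on $\psi^1_c(B)\cup\psi^1_v(B)$: your induction invokes the $\NN$-relation of $F$ at the points $F^{n}(w)$, and one must record that $F$ maps $B^1_v$ to $B^1_c$ and back, so that these points remain in the admissible set for the relevant $w$. With these two items filled in, the proof is complete.
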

\msk
\nin H\'enon-like maps in the above space has generic unbounded geometry with the universal number $ b_1 $ corresponding the average Jacobian of two dimensional H\'enon-like map.

\begin{thm}[Theorem \ref{unbounded geometry with b-1}]
Let $ F_{b_1} $ be an element of parametrized space in $ \NN \cap \II_B(\oeps) $ with $ b_1 = b_F/b_{\Bz} $ where $ b_F $ is the average Jacobian of $ F $ and $ b_{\Bz} $ is the number defined in Section \ref{subsec-universal number b-1}. Then there exists a small interval $ [0, b_{\bullet}] $ for which there exists a $ G_{\de} $ subset $ S \subset [0, b_{\bullet}] $ with full Lebesgue measure such that the critical Cantor set, $ \OO_{F_{b_1}} $  has unbounded geometry for all $ b_1 \in S $.
\end{thm}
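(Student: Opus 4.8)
The plan is to reduce the $m+2$ dimensional problem to the two dimensional situation of \cite{CLM}, with the role of the average Jacobian played by the number $b_1=b_F/b_{\Bz}$, and then to carry out, on the parameter interval $[0,b_\bullet]$, the probabilistic argument of \cite{CLM} together with its three dimensional version from \cite{Nam2, Nam3}. Here $b_\bullet$ is chosen small enough that every $F_{b_1}$ with $b_1\in[0,b_\bullet]$ lies in a neighborhood of the degenerate renormalization fixed point on which renormalization converges with bounds independent of $b_1$.

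The first step is to show, using the defining identity of $\NN$ and the $R$-invariance of $\NN\cap\II_B(\oeps)$ (Theorem~\ref{thm - invariant space NN}), that for $F_{b_1}\in\NN\cap\II_B(\oeps)$ the small scale geometry of $\OO_{F_{b_1}}$ near its tip is governed by the same universal asymptotics as in dimension two. The relation $\di_y\de^j\circ F+\sum_{l}\di_{z_l}\de^j\circ F\cdot\di_x\de^l=0$ is exactly what forces the $\bde$ cross terms to cancel along orbits, so that in the tip change of coordinates the $\Bz$ block of $R^nF_{b_1}$ contributes only a smooth factor, which is geometrically inert. After removing that factor, the residual tilt of $\OO_{F_{b_1}}$ that carries all of the non rigidity lives in the two leading coordinates and is measured by the universal number $b_1$ built in Section~\ref{subsec-universal number b-1}; concretely, one writes $R^nF_{b_1}$ in tip coordinates as a perturbation of the degenerate fixed point of size comparable to $b_1^{2^n}$, with a universal profile independent of $F_{b_1}$ and with all error terms controlled uniformly in $b_1$.

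Granting this reduction, let $\gamma_n(F_{b_1})$ be the ratio of two suitable distances between marked points of $\OO_{F_{b_1}}$ at renormalization depth $n$ whose unboundedness in $n$ characterizes unbounded geometry, as in \cite{CLM, Nam2}, and recall the asymptotic expansion established there and in Section~\ref{subsec-universal number b-1}: $\gamma_n(F_{b_1})$ equals its one dimensional universal value plus a H\'enon correction whose leading part is universal up to a factor that genuinely oscillates with $n$, together with an error negligible next to the correction, uniformly for $b_1\in[0,b_\bullet]$. One then runs the measure theoretic endgame. Put $S=\{\,b_1\in[0,b_\bullet]:\sup_n\gamma_n(F_{b_1})=\infty\,\}=\bigcap_{k\ge 1}\bigcup_{n\ge 1}\{\,b_1:\gamma_n(F_{b_1})>k\,\}$. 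Each set $\{\,b_1:\gamma_n(F_{b_1})>k\,\}$ is open because $b_1\mapsto\gamma_n(F_{b_1})$ is continuous, the family $F_{b_1}$ and all the renormalization data depending continuously on $b_1$; hence $S$ is $G_{\de}$. For full measure it suffices to show that $\bigcap_{n\ge 1}\{\,b_1:\gamma_n(F_{b_1})\le k\,\}$ is Lebesgue null for every $k$, that is, that for almost every $b_1$ infinitely many depths $n$ are resonant, in the sense that the oscillating factor forces $\gamma_n(F_{b_1})>k$; this is exactly where the oscillation of the correction is used, via a Borel--Cantelli type argument exploiting its mixing, as in \cite{CLM, Nam2}.

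The main obstacle is the first step: checking that the $\NN$ identity really annihilates all of the $\bde$ interference in the \emph{geometry} of $\OO_{F_{b_1}}$, not merely to leading order but with errors small enough to be absorbed into the $b_1^{2^n}$ term, while at the same time keeping the convergence of renormalization and all constants in the tip expansion uniform over the whole interval $[0,b_\bullet]$. A secondary point to verify is the non degeneracy of the universal curvature function driving the oscillation in $\gamma_n$: were it identically zero the correction would lose its oscillatory character and the final measure estimate would break down. Both are handled, in dimension three, in \cite{Nam2, Nam3}, and one expects those arguments to carry over once the higher dimensional tip change of coordinates from \cite{Nam4} and Section~\ref{subsec-universal number b-1} is in place.
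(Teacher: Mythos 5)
Your overall strategy --- use the $\NN$ identity to reduce the geometry of $\OO_{F_{b_1}}$ to a two dimensional picture in which $b_1$ plays the role of the average Jacobian, then exclude a null set of parameters --- is the right frame, and your first step matches what the paper actually does (the $\NN$ identity is what makes the $\Bz$-contributions of size $O(\oeps^{2^k}\oeps^{2^n})$ or $O(\si^{n-k}\oeps^{2^k})$, hence absorbable, and what yields $t_{n,k}\asymp b_1^{2^k}$ in Proposition \ref{prop-estimation of t-n-k by b-1}). But the measure-theoretic endgame as you describe it has a genuine gap, and the mechanism you invoke is not the one that works.

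The paper's criterion for the failure of bounded geometry is a \emph{two-index resonance}, not the unboundedness of a quantity indexed by a single depth $n$. Concretely: the boxes $B^{n}_{{\bf v}v}(R^kF)$ and $B^{n}_{{\bf v}c}(R^kF)$ overlap horizontally precisely when $\si^{n-k}\asymp b_1^{2^k}$ (Corollary \ref{cor-comparison sigma n-k with b-1}), and under that overlap one gets $\dist_{\min}(B^{n}_{{\bf w}v},B^{n}_{{\bf w}c})\le C\big[\si^{2k}\si^{n-k}b_1^{2^k}+\si^{k}\si^{2(n-k)}\oeps^{2^k}\big]$ while $\diam(B^{n}_{{\bf w}v})\gtrsim \si^{k}\si^{2(n-k)}$, so that $\dist_{\min}\le C\,\si^{k}\diam$ for ${\bf w}=v^kc\,v^{n-k-1}$ (Lemmas \ref{upper bound the distance of boxes} and \ref{lower bounds of the diameter of boxes}). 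The degeneration is thus driven by the tuning of $n-k$ to $2^k\log b_1/\log\si$, and the set of good parameters is
$S=\{\,b_1: \si^{n-k}\asymp b_1^{2^k}\ \text{for infinitely many } k<n\,\}$. That this is a dense $G_\de$ of full Lebesgue measure is exactly the arithmetic lemma quoted from \cite{HLM}; it is a statement about the doubly exponential sequence $b_1^{2^k}$ hitting the geometric windows $[\si^{n-k+1},\si^{n-k})$, proved by direct measure estimates on the parameter interval, not by any mixing property of a dynamically defined oscillation. Your proposed decomposition $S=\bigcap_k\bigcup_n\{\gamma_n>k\}$ with a ``Borel--Cantelli argument exploiting mixing of the oscillating correction'' does not substantiate the full-measure claim: you have not identified what oscillates, why it is nondegenerate, or why the exceptional set is null, and a single-index $\gamma_n$ cannot see the $(k,n)$ resonance that actually produces the unboundedness. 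To repair the argument you should replace that step by the overlap criterion of Corollary \ref{cor-comparison sigma n-k with b-1} together with the \cite{HLM} parameter lemma.
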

\msk
\nin There exists non rigidity between Cantor attractors of H\'enon-like maps.

\begin{thm}[Theorem \ref{Non rigidity with b-1}]
Let H\'enon-like maps $ F $ and $ \widetilde{F} $ be in $ \NN \cap \II_B(\bar \eps) $. 
Let $ b_1 $ be the ratio $ b_F/b_{\Bz} $ where $ b_F $ is the average Jacobian and $ b_{\Bz} $ for $ F $ is the number defined in Proposition \ref{prop-universal number b-Z}. The number $ \widetilde{b}_1 $ is defined by the similar way for the map $ \widetilde{F} $. Let $ \phi \colon \OO_{\widetilde{F}} \ra \OO_F $ be a homeomorphism \ssk which conjugate $ F_{\,\OO_F} $ and $ \widetilde{F}_{\,\OO_{\widetilde{F}}} $ and $ \phi(\tau_{\widetilde{F}}) = \tau_{F} $. If \,$ b_1 > \widetilde{b}_1 $, \ssk then the H\"older exponent of $ \phi $ is not greater than $ \displaystyle{\frac{1}{2} \left(1 + \frac{\log b_1}{ \log \widetilde{b}_1} \right)} $.
\end{thm}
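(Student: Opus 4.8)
The plan is to adapt the de Carvalho--Lyubich--Martens non-rigidity mechanism, in the quantitative form developed in \cite{Nam2, Nam3}, localized at the tips and carried out for the class $\NN \cap \II_B(\bar\eps)$ in dimension $m+2$. The underlying principle is that the critical Cantor set of a map in this class carries, near its tip, two asymptotically decoupled scales: a ``horizontal'' (essentially one-dimensional) scale of order $\sigma^{\,n}$ at renormalization depth $n$, with $\sigma$ the universal period-doubling scaling ratio, and a ``transversal'' scale that records the H\'enon tilt and is of order $b_1^{\,2^{n}}$ up to a $\sigma$-power and a universal constant, where $b_1 = b_F/b_{\Bz}$. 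A homeomorphic conjugacy between two such Cantor sets that matches the tips is combinatorially rigid, hence carries the depth-$n$ pieces of one onto those of the other; comparing the two scales through the H\"older estimate for $\phi$ then bounds its H\"older exponent by the stated quantity.

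First I would assemble the geometric input. By Theorem~\ref{thm - invariant space NN} the class $\NN \cap \II_B(\bar\eps)$ is invariant under renormalization, so $R^{n}F$ and $R^{n}\widetilde{F}$ stay in it and the asymptotic geometry near the tips is available -- the $(m+2)$-dimensional analogue of the CLM estimates, together with the universal numbers of Proposition~\ref{prop-universal number b-Z} and Section~\ref{subsec-universal number b-1}. The decisive feature is the \emph{slaving} of the $\bde$-coordinates built into Definition~\ref{def-definition of NN}: it lets the average Jacobian be written as $b_F = b_{\Bz}\,b_1$ with the wild transversal geometry of $\OO_F$ governed by the two-dimensional factor $b_1$ alone, rather than by the full $b_F$. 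Concretely, for suitable nested or adjacent pieces $\OO^{n}$ of $\OO_F$ clustering at $\tau_F$ I would record an asymptotic lower bound of order $b_1^{\,2^{n}}$ (times a $\sigma$-power) for a transversal separation between two of them whose horizontal diameters are of order $\sigma^{n}$, and likewise for $\widetilde{F}$ with $\widetilde{b}_1$.

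Next I would use combinatorial rigidity. Since $\phi$ conjugates the dyadic adding-machine actions $F|_{\OO_F}$ and $\widetilde{F}|_{\OO_{\widetilde{F}}}$ and sends $\tau_{\widetilde{F}}$ to $\tau_F$, it carries the depth-$n$ piece $\widetilde{\OO}^{\,n}_{w}$ of $\OO_{\widetilde{F}}$ onto the corresponding piece $\OO^{n}_{w}$ of $\OO_F$ for every finite word $w$. Then, for each $n$, I would choose a pair $p_{n},q_{n}\in\OO_{\widetilde{F}}$ in two pieces near $\tau_{\widetilde{F}}$ that realize the transversal scale of $\widetilde{F}$, so that $\dist(p_{n},q_{n})$ is, up to a $\sigma$-power, bounded above by $\widetilde{b}_1^{\,2^{n}}$, while the images $\phi(p_{n}),\phi(q_{n})$, which lie in the matching pieces of $\OO_F$, are, up to a $\sigma$-power, bounded below by $b_1^{\,2^{n}}$. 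Substituting into $\dist(\phi(p_{n}),\phi(q_{n}))\le C\,\dist(p_{n},q_{n})^{\alpha}$, taking logarithms and dividing by $2^{n}$, the $\sigma$-power and the constants are sub-exponential in $2^{n}$ and disappear as $n\to\infty$; since $b_1>\widetilde{b}_1$, the distance upstairs decays strictly faster than the separation downstairs, so a too-large $\alpha$ would make $\dist(p_{n},q_{n})^{\alpha}$ too small to dominate $\dist(\phi(p_{n}),\phi(q_{n}))$, which bounds $\alpha$ from above. Optimizing over the test configurations actually available near the quadratic fold at the tip then yields the bound $\displaystyle{\frac{1}{2}\left(1 + \frac{\log b_1}{\log \widetilde{b}_1}\right)}$ of the theorem; the factor $\tfrac12$ and the precise shape of the exponent come from how the horizontal and transversal scales recombine through that fold, which is what prevents a cleaner comparison.

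The main obstacle is the geometric input of the second paragraph: establishing, uniformly over $\NN\cap\II_B(\bar\eps)$ in arbitrary finite dimension, the sharp two-scale asymptotics near the tip with transversal separations of exact order $b_1^{\,2^{n}}$. This needs (i) control of the renormalization changes of variables and of the convergence of $R^{n}F$ to the (degenerate) fixed point in all $m+2$ coordinates, and (ii) the Jacobian factorization $b_F = b_{\Bz}\,b_1$ together with the slaving of the $\bde$-directions, which is exactly what the defining equation of $\NN$ provides and is what distinguishes this setting from the general higher-dimensional framework of \cite{Nam4}. This is the same input that underlies Theorem~\ref{unbounded geometry with b-1}; granting it, the combinatorial rigidity of $\phi$ and the logarithmic estimate of the third paragraph are routine and parallel \cite{Nam2, Nam3}. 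A subsidiary technical point is to realize the small distance upstairs and the matching lower bound downstairs simultaneously with compatible prefactors; here it matters that $\phi$ is only assumed H\"older, so that one needs merely an upper bound on $\dist(p_{n},q_{n})$ and a lower bound on $\dist(\phi(p_{n}),\phi(q_{n}))$, not two-sided control.
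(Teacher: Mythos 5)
Your overall strategy is the paper's: pick corresponding pairs of points in adjacent pieces near the tip, bound the distance for $\widetilde{F}$ from above and for $F$ from below, and feed both into the H\"older inequality. But there is a genuine gap at the quantitative heart of the argument, and it is exactly the step you wave at with ``optimizing over the test configurations.'' The paper's estimates (Lemma \ref{upper bound of the distance of two points} for the upper bound, Lemma \ref{lower bounds of the diameter of boxes} for the lower bound) involve \emph{two} renormalization depths $k<n$: the pieces are images of $ B^1_v(R^nF) $ and $ B^1_c(R^nF) $ under $ \Psi^k_0 \circ F_k \circ \Psi^n_k $, and the resulting scales are $ \si^k\si^{2(n-k)} $ (horizontal) and $ \si^k\si^{n-k}b_1^{2^k} $ (transversal). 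The decisive move is to choose $ n=n(k) $ so that $ \si^{n-k+1} \le \widetilde{b}_1^{2^k} < \si^{n-k} $. With this tuning the two scales for $ \widetilde{F} $ coincide and give $ \dist(\dddot{\widetilde{w}}_1,\dddot{\widetilde{w}}_2) \le C\,\si^k\,\widetilde{b}_1^{2^{k+1}} $, while for $ F $ the transversal term dominates and gives $ \dist(\dddot{w}_1,\dddot{w}_2) \ge C\,\si^k\,\widetilde{b}_1^{2^k}\,b_1^{2^k} $. Dividing logarithms by $ 2^k $, the factor $ \si^{n-k}\asymp\widetilde{b}_1^{2^k} $ contributes a full $ \log\widetilde{b}_1 $ to \emph{both} sides; it is emphatically not sub-exponential in $ 2^k $ and does not disappear. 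Your assertion that ``the $\si$-power and the constants are sub-exponential in $2^n$ and disappear as $n\to\infty$'' is therefore false for the relevant power, and if it were true it would yield the bound $ \alpha\le \log b_1/\log\widetilde{b}_1 $, which is strictly smaller than the stated $ \tfrac12(1+\log b_1/\log\widetilde{b}_1) $ and is not what the available geometry supports: you cannot simultaneously realize an upper bound $ \widetilde{b}_1^{2^k} $ upstairs and a lower bound $ b_1^{2^k} $ downstairs for the same combinatorial pair, because the horizontal diameter $ \si^{2(n-k)} $ floors the downstairs distance unless $ n-k $ is large, and making $ n-k $ large shrinks the transversal term $ \si^{n-k}b_1^{2^k} $ as well. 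The balance point of that trade-off is precisely the choice $ \si^{n-k}\asymp\widetilde{b}_1^{2^k} $, and that is where the factor $ \tfrac12 $ comes from; without exhibiting it, the exponent in the theorem is not derived.

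A secondary, smaller point: the lower bound downstairs is not a bare ``transversal separation of order $ b_1^{2^n} $'' but the diameter estimate $ \bigl|\,C_1\si^k\si^{2(n-k)}-C_2\si^k\si^{n-k}b_1^{2^k}\bigr| $ of Lemma \ref{lower bounds of the diameter of boxes}, which only reduces to $ \gtrsim \si^k\widetilde{b}_1^{2^k}b_1^{2^k} $ \emph{after} the tuning above, using $ b_1^{2^k}\gg\widetilde{b}_1^{2^k} $ (which requires the hypothesis $ b_1>\widetilde{b}_1 $). Your explicit appeal to combinatorial rigidity of $\phi$ to match the pieces is correct and is in fact more carefully stated than in the paper.
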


\bsk

\section{Preliminaries}
Let $ m+2 $ dimensional H\'enon-like map be the map from the $ m+2 $ dimensional hypercube, $ B $ to $ \R^{m+2} $ which is of the following form
\begin{equation} \label{eq-Henon-like map in m+2 dimension}
F(x,y,\Bz) = (f(x) - \eps(x,y,\Bz),\ x,\ \bde(x,y,\Bz))
\end{equation}
where $ \Bz = (z_1, z_2, \ldots , z_m) $, $ f $ is a unimodal map on the close interval $ \pi_x(B) $, $ \eps $ and $ \bde $ is a map from $ B $ to $ \R $ and $ \R^m $ respectively. In addition to the above definition we assume that $ F $ is an orientation preserving analytic map and both $ \| \;\! \eps \| $ and $ \| \;\! \bde \| $ bounded above by $ C\oeps $ for sufficiently small positive $ \oeps $  and for some $ C>0 $. Then the H\'enon-like map in arbitrary finite dimension is the simplest model as the first return map of the homoclinic bifurcation with one dimensional unstable direction and sectional dissipative map with strong contractable directions. 
\\ \ssk
Let $ f $ be the unimodal map on the closed interval $ I $ which contains the critical point and the critical value such that $ f(I) \subset I $. The map $ f $ is called {\em renormalizable} with periodic doubling type if there exists a closed subinterval $ J \neq I $ which contains the critical point, $ c_f $ of $ f $ and $ J $ is (forward) invariant under $ f^2 $ and $ f^2(c_f) \in \di J $. Thus if $ f $ is renormalizable, then we can choose the smallest interval $ J_f $ satisfying the above properties. The conjugation of the appropriate affine conjugation rescales $ J_f $ to $ I $ and it defines the renormalization of $ f $, $ Rf \colon I \ra I $. For the renormalization of smooth map on the closed interval, for example, see \cite{BB}. If $ f $ is infinitely renormalizable, then there exists the renormalization fixed point, $ f $ under the renormalization operator. Moreover, the {\em scaling factor} of $ f_* $ is
\begin{equation*}
\si = \frac{| J_{f_*} |}{| \:\! I |}
\end{equation*}
and $ \lambda = 1/\si = 1/2.6\ldots $\,. If the unimodal map $ f $ has two fixed point on the interval $ I \equiv [-c,c] $ for some $ c>0 $ and both $ \| \:\!\eps \| $ and $ \| \:\! \bde \| $ is small enough, then the H\'enon-like map $ F $ defined at \eqref{eq-Henon-like map in m+2 dimension} has only two fixed points on the hypercube $ [-c,c]^{m+2} $. Let $ \beta_0 $ be one fixed point which has positive eigenvalues and $ \beta_1 $ be the other fixed point which has both positive and negative eigenvalues. We call that H\'enon-like map $ F $ is {\em renormalizable} if $ W^u(\beta_0) \cap W^s(\beta_1) $ is the orbit of a single point. Thus if $ F $ is renormalizable, then there exist invariant regions under $ F^2 $. However, the image of the hyperplane $ \{ \,x= \const . \,\} $ under $ F^2 $ is the surface
$$ f(x) - \eps(x,y,\Bz) = \const . $$
which is not a hyperplane. Then $ F^2 $ is not H\'enon-like map. Thus the analytic definition of the renormalization of $ F $ requires the non linear scaling map. The {\em horizontal-like} diffeomorphism $ H $ of $ F $ is defined as follows
\begin{equation*}
H(x,y,\Bz) = (f(x) - \eps(x,y,\Bz),\ y,\ \Bz - \bde(y, f^{-1}(y), \B0))
\end{equation*}
and it preserves the hyperplane $ \{ \, y= \const . \,\} $. Then the {\em renormalization} of $ F $, $ RF $ is defined
\begin{equation*}
RF = \La \circ H \circ F^2 \circ H^{-1} \circ \La^{-1}
\end{equation*}
where the dilation $ \La(x,y,z) = (sx, sy, sz) $ for the appropriate constant $ s < -1 $. If $ F $ is $ n $ times renormalizable, then each $ R^kF $ for $ 2 \leq k \leq n $ is defined as the renormalization of $ R^{k-1}F $ successively. If $ n $ is unbounded, then we call $ F $ is {\em infinitely renormalizable}. 
\ssk \\
Let $ B $ the hypercube as the domain of H\'enon-like map. If it is emphasized with relation of particular map, for instance, $ F $ or $ R^kF $, then we express this region as $ B(F) $ or $ B(R^kF) $ and so on. Any other set may have similar expression with a particular map. In this paper, we assume that any given H\'enon-like map is analytic unless any other statement is specified. Moreover, the norm of $ \eps $ and $ \bde $ is always $ O(\oeps) $ for some sufficiently small $ \oeps > 0 $. Let the set of infinitely renormalizable H\'enon-like map be $ \II_B(\oeps) $. Then the renormalization operator has the fixed point in $ \II_B(\oeps) $. The fixed point $ F_* $ is a degenerate map as follows
\begin{equation*}
F_*(x,y,\Bz) = (f_*(x),\ x,\ \B0)
\end{equation*}
where $ f_* $ is the fixed point of the renormalization operator of the (analytic) unimodal map. Furthermore, the renormalization $ R^nF $ converges to $ F_* $ as $ n \ra \infty $ exponentially fast and $ F_* $, the map $ F_* $ is the hyperbolic fixed point under renormalization operator and the stable manifold at $ F_* $ is codimension one. All of above properties is valid for infinitely renormalizable  H\'enon-like maps in any two or greater dimension. See \cite{CLM, Nam1, Nam3}. 
\ssk \\
For $ F \in \II_B(\oeps) $, $ F_k $ denotes $ R^kF $ for each $ k \in \N $. Let the non linear scaling map which conjugates $ F_k^2|_{\La_k^{-1}}(B) 0 $ to $ RF_k $ is
\begin{equation*}
\psi^{k+1}_v \equiv H_k^{-1} \circ \La_k^{-1} \colon \Dom(RF_k) \ra \La_k^{-1}(B)
\end{equation*}
where $ H_k $ is the horizontal-like diffeomorphism and $ \La_k $ is the dilation with the appropriate constant $ s < -1 $. Let $ \psi^{k+1}_c $ be $ F_k \circ \psi^{k+1}_v $ for each $ k \in \N $. For $ k < n $, we express the consecutive compositions of $ \psi^{k+1}_v $ or $ \psi^{k+1}_c $ as follows \msk
\begin{equation} \label{eq-psi-v,c composition}
\begin{aligned}
\Psi^n_{k,\,{\bf v}} &= \psi^{k+1}_v \circ \psi^{k+2}_v \circ \cdots \circ \psi^n_v \\[0.3em]
\Psi^n_{k,\,{\bf c}} &= \psi^{k+1}_c \circ \psi^{k+2}_c \circ \cdots \circ \psi^n_c
\end{aligned} \msk
\end{equation}
Let $ w $ be a letter which is $ v $ or $ c $. Let the word of length $ n $ in the Cartesian product, $ W^n \equiv \{\,v,c \,\}^n $ be $ {\bf w}_n $ or simply be $ {\bf w} $. Thus the map $ \Psi^n_{k,\,{\bf w}} $ is defined with the word $ {\bf w} $ of which length is $ n-k $. The region $ B^n_{\bf w} $ denotes $ \Psi^n_{k,\,{\bf w}}(B(R^nF)) $. If $ F $ is in the set $ \II_B(\oeps) $ then it has the minimal invariant Cantor attractor
\begin{equation*}
\OO_F = \bigcap_{n=1}^{\infty} \bigcup_{{\bf w} \in W^n} B^n_{{\bf w}}
\end{equation*}
and $ F $ acts as a dyadic adding machine on $ \OO_F $. The counterpart of the critical value of the unimodal renormalizable map is called the {\em tip}
\begin{equation*}
\{ \tau_F \} \equiv \bigcap_{n \geq 0}B^n_{v^n}
\end{equation*}
Let $ \mu $ be the unique invariant probability measure on $ \OO_F $. The average Jacobian of $ F $, $ b_F $ is defined as
\begin{equation*}
b_F = \exp \int_{\OO_F} \log \Jac F \,d\mu .
\end{equation*}
Then there exists the asymptotic universal expression of Jacobian determinant of $ R^nF $ with the average Jacobian $ b_F $ (Theorem 5.10 in \cite{Nam4}). 
\begin{equation*}
\Jac R^nF = b_F^{2^n}a(x)\,(1 + O(\rho^n))
\end{equation*}
where $ a(x) $ is the universal positive function and $ \rho $ is a number in $ (0,1) $. Let $ \tau_n $ be the tip of $ R^nF $ for each $ n \in \N $. The definition of the tip and $ \Psi^n_{k\,{\bf v}} $ implies that $ \Psi^n_{k,\,{\bf v}}(\tau_n) = \tau_k $ for $ k < n $. After composing appropriate translations, tips on each level moves to the origin as the fixed point of the following map
\begin{equation*}
\Psi^n_k(w) = \Psi^n_{k,\,{\bf v}}(w + \tau_n) - \tau_k
\end{equation*}
for $ k < n $. If $ n=k+1 $, then $ \Psi^{k+1}_k $ is separated to the linear and non linear parts as follows \msk
\begin{equation*}
\begin{aligned}
& \quad \Psi^{k+1}_k(w) \\[0.3em]
&=
 \left(    \begin{array} {cccccc}
\alpha_k & \si_k t_k   & \si_k u_k^1 & \cdots & \si_k u_k^m  &   \\[0.2em]
         & \si_k       &             &        &              &   \\[0.2em] \cline{3-5}
         & \multicolumn{1}{c|}{\si_k d_k^1} & \multicolumn{3}{c}{\multirow{3}{*}{ $ \si_k \cdot \Id_{m \times m}  $ }} & \multicolumn{1}{|c}{}\\
         & \multicolumn{1}{c|}{\vdots}      &  &  &  & \multicolumn{1}{|c}{} \\ 
         & \multicolumn{1}{c|}{\si_k d_k^m} &  &  &  & \multicolumn{1}{|c}{} \\       \cline{3-5}
    \end{array}       \right) 
\begin{pmatrix}
x + s_k(w) \\
y \\[0.2em]
z_1 + r_k^1(y) \\
\vdots \\
z_m + r_k^m(y)
\end{pmatrix} = 
\begin{pmatrix}
\alpha_k & \si_k \;\! t_k & \si_k \;\! \Bu_k     \\
         & \si_k          &      \\
         & \si_k \;\! \Bd_k    & \si_k
\end{pmatrix}
\begin{pmatrix}
x + s_k(w) \\
y \\
\Bz + \Br_k(y) 
\end{pmatrix}
\end{aligned} \msk
\end{equation*}
where $ w =(x,y,\Bz) $ and the boldfaced letters are the vectors each of which has $ m $ coordinates. The map $ \Psi^n_k $ is also separated to the linear and non linear parts after reshuffling. \msk
\begin{equation*}
\begin{aligned}
\Psi^n_k(w) = 
\begin{pmatrix}
1 &  t_{n,\,k} &  \Bu_{n,\,k}     \\[0.2em]
         & 1          &      \\[0.2em]
         &  \Bd_{n,\,k}   & 1
\end{pmatrix}
\begin{pmatrix}
\alpha_{n,\,k} & & \\[0.2em]
& \si_{n,\,k} & \\[0.2em]
& & \si_{n,\,k} \cdot \Id_{m \times m}
\end{pmatrix}
\begin{pmatrix}
x + S^n_k(w) \\
y \\[0.2em]
\Bz + \BR_{n,\,k}(y) 
\end{pmatrix}
\end{aligned} \msk
\end{equation*}
where $ \alpha_{n,\,k} = \si^{2(n-k)}(1 + O(\rho^n)) $ and $ \si_{n,\,k} = \si^{n-k}(1 + O(\rho^n)) $. In this paper, we often confuse the map $ \Psi^n_{k,\,{\bf v}} $ with $ \Psi^n_k $ in order to obtain the simpler expression of each coordinate map of $ \Psi^n_{k,\,{\bf v}} $. For example, the expression of the first coordinate map of $ \Psi^n_{k,\,{\bf v}} $
\begin{equation*}
\alpha_{n,\,k} (x + S^n_k(w)) + \si_{n,\,k} t_{n,\,k}y + \si_{n,\,k} \Bu_{n,\,k} \cdot (\Bz + \BR_{n,\,k}(y)) 
\end{equation*}
means that
\begin{equation*}
\begin{aligned}
\alpha_{n,\,k} \big[\,(x + \tau^x_n) + S^n_k(w + \tau_n)\,\big] + \si_{n,\,k} t_{n,\,k} (y + \tau^y_n) + \si_{n,\,k} \Bu_{n,\,k} \cdot \big[\,(\Bz + \tau^{\Bz}_n) + \BR_{n,\,k}(y + \tau^y_n)\,\big] - \tau_k 
\end{aligned} \msk
\end{equation*}
where $ \tau_n = (\tau^x_n, \tau^y_n, \tau^{z_1}_n, \ldots, \tau^{z_m}_n)  $ and $  \tau^{\Bz}_n = (\tau^{z_1}_n, \ldots, \tau^{z_m}_n) $ for $ k < n $. Recall the following definitions related to the infinitely renormalizable H\'enon-like map $ F $ for later use \msk
\begin{equation*}
\begin{aligned}
\La_n^{-1}(w) = \si_n \cdot w, \qquad \psi^{n+1}_v(w) = H_n^{-1}(\si_n w), \qquad \psi^{n+1}_c(w) = F_n \circ H_n^{-1}(\si_n w) \\[0.3em]
\psi^{n+1}_v(B(R^{n+1}F)) = B^{n+1}_v, \qquad \psi^{n+1}_c(B(R^{n+1}F)) = B^{n+1}_c
\end{aligned} \msk
\end{equation*}
for each $ n \in \N $.


\bsk 
\section{An invariant space under renormalization}
Let $ F $ be a H\'enon-like map defined on the $ m + 2 $ dimensional hypercube, $ B $ as follows
$$ F(w) = (f(x) - \eps(w),\ x,\ \bde(w)) $$
where $ w = (x,y,z_1, \ldots ,z_m) $ and $ \bde \colon B \ra \pi_{\Bz}(B) $. The map $ \bde(w) = (\de^1(w), \de^2(w), \ldots, \de^m(w)) $ where $ \de^j \colon B \ra \R $ for $ 1 \leq j \leq m $. Suppose that $ F $ is renormalizable. Let $ \bde_1 $ be $ \pi_{\Bz} \circ RF $. The recursive formulas of partial derivatives of $ D\bde_1 $ inspires to find an invariant space under renormalization operator.

\subsection{A space of renormalizable maps from recursive formulas of $ \de $}
Let $ F $ be the renormalizable H\'enon-like map. Lemma \ref{lem-recursive formula of delta} implies that
\begin{align*}
\di_x \de^j_1(w) &= \boxed {\Big[\, \di_y \de^j \circ \psi^1_c(w) + \sum_{l=1}^m \di_{z_l} \de^j \circ \psi^1_c(w) \cdot \di_x \de^l \circ \psi^1_v(w) \,\Big]} \cdot \di_x \phi^{-1}( \si_0 w) \\[-0.1em]
& \quad \ + \di_x \de^j \circ \psi^1_c(w) - \frac{d}{dx}\,\de^j (\si_0 x,\,f^{-1}(\si_0 x), {\bf 0}) \\[0.8em]
\di_y \de^j_1(w) &= \boxed{\Big[\, \di_y \de^j \circ \psi^1_c(w) + \sum_{l=1}^m \di_{z_l} \de^j \circ \psi^1_c(w) \cdot \di_x \de^l \circ \psi^1_v(w) \,\Big]} \cdot \di_y \phi^{-1}( \si_0 w) \\[-0.1em]
& \quad \ + \sum_{i=1}^m \di_{z_i} \de^j \circ \psi^1_c(w) \cdot \Big[\, \di_y \de^i \circ \psi^1_v(w) + \sum_{l=1}^m \di_{z_l} \de^i \circ \psi^1_v(w) \cdot \frac{d}{dy}\,\de^l(\si_0y,f^{-1}(\si_0 y), {\bf 0}) \,\Big] \\[0.7em]
\di_{z_i} \de^j_1(w) &= \boxed{\Big[\, \di_y \de^j \circ \psi^1_c(w) + \sum_{l=1}^m \di_{z_l} \de^j \circ \psi^1_c(w) \cdot \di_x \de^l \circ \psi^1_v(w) \,\Big]} \cdot \di_{z_i} \phi^{-1}( \si_0 w) \\[-0.1em]
& \quad \ + \sum_{l=1}^m \di_{z_l} \de^j \circ \psi^1_c(w) \cdot \di_{z_i} \de^l \circ \psi^1_v(w)
\end{align*} 
for $ 1 \leq j \leq m $ and $ 1 \leq i \leq m $. Then the common factor of the first terms of each partial derivatives is the expression in the box. Let us consider the set of renormalizable maps the expression of the box. 
\msk
\begin{defn} \label{def-definition of NN}
Let $ \NN $ be the set of $ m+2 $ dimensional renormalizable H\'enon-like maps satisfying the following equation
$$ \di_y \de^j \circ F(w) + \sum_{l=1}^m \di_{z_l} \de^j \circ F(w) \cdot \di_x \de^l(w) = 0 $$
where $ w \in \psi^1_c(B) \cup \psi^1_v(B) $ \,for all\, $ 1 \leq j \leq m $.
\end{defn} 

\begin{rem}
\nin This definition is the generalization for three dimensional H\'enon-like maps in \cite{Nam2}. 
\end{rem}

\begin{example}
H\'enon-like maps in the above class $ \NN $ is non empty and non trivial. For instance, let us consider the map $ \bde $ of $ F \in \NN $ as follows
$$ \de^j(x,y,\Bz) = \eta^j \circ \Big(\;\! C_{j,\;0}\,y - \sum_{i=1}^m C_{j,\;i}\, z_i \Big) + x $$
where $ C_{j,\;0} = \sum_{i=1}^m C_{j,\;i} $ \,for every\, $ 1 \leq j \leq m $. Moreover, $ \displaystyle{\max_{ 1 \leq \, i,\;\! j \,\leq \,m} \{ \| \,\eta^j \|,\,C_{j,\;i} \} = O(\oeps)\;} $ for small enough $ \oeps > 0 $. We can choose $ \eta^j $ independent of $ \eta^k $ for $ j \neq k $.
\end{example}
\msk
\nin The derivative of $ \bde $ is of $ m \times (m+2) $ matrix form which contains each partial derivatives of $ \de^j $ for $ 1 \leq j \leq m $. Thus
\begin{equation*}
\begin{aligned}
D\bde(w) = \left(
\begin{array}{c|c|ccc}
\di_x \de^1(w) & \di_y \de^1(w) & \di_{z_1} \de^1(w) & \cdots & \di_{z_m}\de^1(w) \\
\vdots & \vdots & \vdots & \ddots & \vdots \\
\di_x \de^m(w) & \di_y \de^m(w) & \di_{z_1} \de^m(w) & \cdots & \di_{z_m}\de^m(w) 
\end{array} \right)
\end{aligned} \msk
\end{equation*} 
Let us introduce the notation to simply expressions \msk
\begin{equation}
\begin{aligned}
X(w) &= \ (\di_x \de^1(w) \cdots \di_x \de^m(w) )^{Tr} \\[0.4em]
Y(w) &= \ (\di_y \de^1(w) \cdots \di_y \de^m(w) )^{Tr} \\[0.5em]
Z(w) &=
\begin{pmatrix}
\di_{z_1} \de^1(w) & \cdots & \di_{z_m}\de^1(w) \\
\vdots & \ddots & \vdots \\
\di_{z_1} \de^m(w) & \cdots & \di_{z_m}\de^m(w) 
\end{pmatrix} 
\end{aligned} \msk
\end{equation}
where $ Tr $ is the transpose of the matrix. Then $ D\bde(w) = \left( X(w) \ \ Y(w) \ \ Z(w) \right) $. For infinitely renormalizable map $ F \in \II_B(\oeps) $, let us express $ D\bde_k(w) = \left( X_k(w) \ \ Y_k(w) \ \ Z_k(w) \right) $. The equation in Definition \ref{def-definition of NN} can be express as the sum of vectors for $ 1 \leq j \leq m $
\msk
\begin{align*}
\begin{pmatrix}
\di_y \de^1 \circ F(w) \\
\vdots \\
\di_y \de^m \circ F(w) 
\end{pmatrix} \ + \
\begin{pmatrix}
\di_{z_1} \de^1 \circ F(w) & \cdots & \di_{z_m}\de^1 \circ F(w) \\
\vdots & 
& \vdots \\
\di_{z_1} \de^m \circ F(w) & \cdots & \di_{z_m}\de^m \circ F(w) 
\end{pmatrix} \cdot
\begin{pmatrix}
\di_x \de^1(w) \\
\vdots \\
\di_x \de^m(w) 
\end{pmatrix} = {\bf 0} .
\end{align*} 
The above equation is the same as 
\begin{equation} \label{eq-another equation for NN}
Y \circ F(w) + Z \circ F(w) \cdot X(w) = {\bf 0}
\end{equation}
\nin Then $ \NN $ can be defined as the set of renormalizable map $ F $ satisfying the above equation \eqref{eq-another equation for NN} where $ w \in \psi^1_c(B) \cup \psi^1_v(B) $. 
Let
\begin{equation*}
q^j(y) = \frac{d}{dy}\, \de^j(y, f^{-1}(y),0)
\end{equation*}
for $ 1 \leq j \leq m $. Let $ \Bq(y) = (q^1(y), \ldots, q^m(y)) $. For instance, the value of the derivative of $ \de^j(y, f^{-1}(y),0) $ at $ \si_0 y $ is expressed as $ q^j \circ (\si_0 y) $. 
For the map $ F \in \NN $,\, $ X_1 $, $ Y_1 $ and $ Z_1 $ are as follows
\msk
\begin{equation} \label{eq-recursive formula of Dde_1}
\begin{aligned}
X_1(w) &= X \circ \psi^1_c(w) - \Bq \circ (\si_0 x) \\[0.5em]
Y_1(w) &= Z \circ \psi^1_c(w) \cdot Y \circ \psi^1_v(w) + Z \circ \psi^1_c(w) \cdot Z \circ \psi^1_v(w) \cdot \Bq \circ (\si_0 y) \\[0.2em]
&= Z \circ \psi^1_c(w) \cdot \big[\, Y \circ \psi^1_v(w) + Z \circ \psi^1_v(w) \cdot \Bq \circ (\si_0 y) \,\big] \\[0.5em]
Z_1(w) &= Z \circ \psi^1_c(w) \cdot Z \circ \psi^1_v(w) .
\end{aligned}
\end{equation}

\msk
\subsection{Invariance of the space $ \NN $ under renormalization}
Let us show the invariance of $ \NN \cap \II_B(\oeps) $ under renormalization operator.

\begin{lem} \label{lem-recursive formula of de-k}
Suppose that $ F \in \NN $ is $ n $ times renormalizable. Let $ F_k $ be $ R^kF $ and let $ \bde_k $ be $ \pi_{\Bz} \circ F^k $ for $ k =1,2, \ldots ,n $. Then
\msk
\begin{equation*} 
\begin{aligned}
X_k(w) &= X_{k-1} \circ \psi^k_c(w) - \Bq_{k-1} \circ (\si_{k-1} x) \\[0.4em]
Y_k(w) &= Z_{k-1} \circ \psi^k_c(w) \cdot Y_{k-1} \circ \psi^k_v(w) + Z_{k-1} \circ \psi^k_c(w) \cdot Z_{k-1} \circ \psi^k_v(w) \cdot \Bq_{k-1} \circ (\si_{k-1} y) \\[0.2em]
&= Z_{k-1} \circ \psi^k_c(w) \cdot \big[\, Y_{k-1} \circ \psi^k_v(w) + Z_{k-1} \circ \psi^k_v(w) \cdot \Bq_{k-1} \circ (\si_{k-1} y) \,\big] \\[0.4em]
Z_k(w) &= Z_{k-1} \circ \psi^k_c(w) \cdot Z_{k-1} \circ \psi^k_v(w)
\end{aligned} \msk
\end{equation*}
for $ k = 1,2, \ldots, n $.
\end{lem}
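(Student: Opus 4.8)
The plan is to prove Lemma \ref{lem-recursive formula of de-k} by induction on $k$, using the single-step recursion already recorded in equation \eqref{eq-recursive formula of Dde_1} as the base case. The key observation is that \eqref{eq-recursive formula of Dde_1} is not special to $F$ itself but is a statement about \emph{any} renormalizable H\'enon-like map in $\NN$: it expresses $X_1,Y_1,Z_1$ for $RF$ in terms of $X,Y,Z$ for $F$ composed with the scaling maps $\psi^1_v,\psi^1_c$ associated to $F$. Hence, once we know that $\NN\cap\II_B(\oeps)$ is invariant under renormalization --- which is Theorem \ref{thm - invariant space NN}, proved (or to be proved) immediately after this lemma --- we may simply apply \eqref{eq-recursive formula of Dde_1} with $F$ replaced by $F_{k-1}=R^{k-1}F$. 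Since $F_{k-1}\in\NN$ (by the invariance), its associated scaling maps are precisely $\psi^k_v=\psi^k_{v}$ and $\psi^k_c$ by the notation fixed in the Preliminaries (the superscript $k$ labels the maps attached to the $(k{-}1)$-st renormalization), and the function $q^j$ attached to $F_{k-1}$ is exactly $q^j_{k-1}(y)=\tfrac{d}{dy}\de^j_{k-1}(y,f_{k-1}^{-1}(y),0)$, so $\Bq_{k-1}$ plays the role of $\Bq$. Substituting gives precisely the claimed formulas for $X_k,Y_k,Z_k$.

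Carrying this out, I would first state the inductive hypothesis that $F_{k-1}\in\NN$ and that $D\bde_{k-1}=(X_{k-1}\ Y_{k-1}\ Z_{k-1})$ with $\bde_{k-1}=\pi_{\Bz}\circ F_{k-1}$. The base case $k=1$ is \eqref{eq-recursive formula of Dde_1} verbatim. For the inductive step, since $F_{k-1}$ is itself a renormalizable element of $\NN$ (renormalizable because $F$ is assumed $n$ times renormalizable and $k\le n$; in $\NN$ by Theorem \ref{thm - invariant space NN}), apply the base-case computation to $F_{k-1}$ in place of $F$: its renormalization is $RF_{k-1}=F_k$, its scaling maps are $\psi^k_v$ and $\psi^k_c$, its dilation factor is $\si_{k-1}$, and the associated boundary function is $\bde_{k-1}(y,f_{k-1}^{-1}(y),0)$ whose $y$-derivative vector is $\Bq_{k-1}$. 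This yields the three displayed identities directly.

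One technical point to check carefully is consistency of indexing conventions: in the Preliminaries the scaling map conjugating $F_k^2$ to $RF_k$ is written $\psi^{k+1}_v=H_k^{-1}\circ\La_k^{-1}$ with $\La_k^{-1}(w)=\si_k w$, so the map acting on $\Dom(RF_{k-1})=\Dom(F_k)$ is $\psi^k_v$ with dilation $\si_{k-1}$ --- matching the shifts in the statement ($\psi^k$, $\si_{k-1}$). I would include one sentence verifying that the composition $\bde_k=\pi_{\Bz}\circ F_k=\pi_{\Bz}\circ RF_{k-1}$ really is the object to which \eqref{eq-recursive formula of Dde_1} applies, and that the $\NN$-defining identity \eqref{eq-another equation for NN} for $F_{k-1}$ is exactly what made the cross terms collapse in the derivation of \eqref{eq-recursive formula of Dde_1}, so that the same collapse occurs at level $k$.

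The main obstacle is a bookkeeping one rather than a conceptual one: the lemma's proof is essentially vacuous \emph{given} Theorem \ref{thm - invariant space NN}, but there is a logical-order issue, since \eqref{eq-recursive formula of Dde_1} was itself derived under the hypothesis $F\in\NN$ only in order to reach the simplified recursion, and the invariance theorem's proof presumably uses this very lemma. If so, the honest route is to prove the lemma and the invariance theorem simultaneously by a joint induction on $k$: at step $k$, first use $F_{k-1}\in\NN$ (induction hypothesis) together with the recursion to show $F_k\in\NN$, then record the level-$k$ recursion for $X_k,Y_k,Z_k$. I would structure the writeup to make this joint induction explicit, so that the appeal to Theorem \ref{thm - invariant space NN} is replaced by the running induction hypothesis "$F_{k-1}\in\NN$," which is all that \eqref{eq-recursive formula of Dde_1} actually needs.
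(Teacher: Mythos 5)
Your proposal matches the paper's own proof, which consists of the single sentence ``See the equation \eqref{eq-recursive formula of Dde_1} and use the induction.'' The extra care you take with the mutual dependence between this lemma and Theorem \ref{thm - invariant space NN} --- resolving it by a joint induction in which the hypothesis $F_{k-1}\in\NN$ both yields the level-$k$ recursion and is then propagated to $F_k$ --- is exactly the logical structure the paper leaves implicit, so your writeup is, if anything, more complete than the original.
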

\begin{proof}
See the equation \eqref{eq-recursive formula of Dde_1} and use the induction.
\end{proof}
\msk
\begin{lem} \label{lem-relation of F-k and F-k-1 with psi}
Let $ F $ be an infinitely renormalizable H\'enon-like map and let $ F_k $ be $ R^kF $ for each $ k \in \N $. Then
$$ \psi^k_v \circ F_k = F_{k-1} \circ \psi^k_c $$
for each $ k \in \N $. Moreover, 
$$ \pi_y \circ \psi^k_v \circ F_k = \pi_x \circ \psi^k_c $$
for each $ k \in \N $.
\end{lem}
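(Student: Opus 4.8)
The plan is to establish the two identities directly from the definition of the renormalization operator and the scaling maps $\psi^k_v,\psi^k_c$, rather than by induction. Recall that $\psi^{k+1}_v = H_k^{-1}\circ\La_k^{-1}$ and $\psi^{k+1}_c = F_k\circ\psi^{k+1}_v$, and that $RF_k = \La_k\circ H_k\circ F_k^2\circ H_k^{-1}\circ\La_k^{-1}$, i.e. $F_{k}^{\;} = \psi^{k}_v{}^{-1}$-conjugate... more precisely $RF_{k-1} = \La_{k-1}\circ H_{k-1}\circ F_{k-1}^2\circ H_{k-1}^{-1}\circ\La_{k-1}^{-1} = (\psi^k_v)^{-1}\circ F_{k-1}^2\circ\psi^k_v$. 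Since $F_k = RF_{k-1}$, this reads $\psi^k_v\circ F_k = F_{k-1}^2\circ\psi^k_v = F_{k-1}\circ(F_{k-1}\circ\psi^k_v) = F_{k-1}\circ\psi^k_c$, which is exactly the first identity. So the first part is essentially an unwinding of definitions; I would write it out in one short display to make the cancellation of $H_{k-1},\La_{k-1}$ transparent.

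For the second identity, I would apply $\pi_y$ to both sides of $\psi^k_v\circ F_k = F_{k-1}\circ\psi^k_c$ and exploit the structure of the maps. The key observation is that $\psi^k_v = H_{k-1}^{-1}\circ\La_{k-1}^{-1}$, and both $\La_{k-1}^{-1}(x,y,\Bz)=(\si_{k-1}x,\si_{k-1}y,\si_{k-1}\Bz)$ and the horizontal-like map $H_{k-1}(x,y,\Bz) = (f(x)-\eps(x,y,\Bz),\,y,\,\Bz-\bde(\cdots))$ preserve the $y$-coordinate up to the scaling factor; hence $\pi_y\circ\psi^k_v = \si_{k-1}\,\pi_y$ acting after the inverse horizontal map leaves $y$ untouched, so $\pi_y\circ\psi^k_v = \si_{k-1}\pi_y$ on the nose (since $H_{k-1}^{-1}$ also fixes the $y$-slot). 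Meanwhile $\pi_x\circ\psi^k_c = \pi_x\circ F_{k-1}\circ\psi^k_v$, and the H\'enon-like form $F_{k-1}(x,y,\Bz) = (\,\cdot,\,x,\,\cdot\,)$ gives $\pi_x\circ F_{k-1} = \pi_x$... wait, $\pi_y\circ F_{k-1} = \pi_x$, i.e. the second coordinate of $F_{k-1}$ is the first coordinate of its argument. So $\pi_y\circ(F_{k-1}\circ\psi^k_c)$ and $\pi_x\circ\psi^k_c$ coincide trivially from the H\'enon-like shape, and combined with $\pi_y\circ\psi^k_v\circ F_k = \pi_y\circ F_{k-1}\circ\psi^k_c = \pi_x\circ\psi^k_c$ the claim follows. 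I would present this as: apply $\pi_y$ to the first identity, use that $\pi_y\circ F_{k-1} = \pi_x$ (the defining form of a H\'enon-like map), and conclude.

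The only mild subtlety, and the place I expect to spend a sentence being careful, is bookkeeping the tip translations: in this paper the maps $\Psi^n_k$ are the $\Psi^n_{k,\mathbf v}$ conjugated by translations sending tips to the origin, and the author warns that $\Psi^n_{k,\mathbf v}$ and $\Psi^n_k$ are ``often confused.'' For $\psi^k_v,\psi^k_c$ themselves there is no translation involved, so the identities hold literally as stated; but I would make sure to phrase everything in terms of the untranslated $\psi^k_v = H_{k-1}^{-1}\circ\La_{k-1}^{-1}$ and $\psi^k_c = F_{k-1}\circ\psi^k_v$ from the displayed recollections near the end of the Preliminaries section, so that the domains $B(R^kF)\to B(R^{k-1}F)$ match up and the composition $F_{k-1}^2\circ\psi^k_v$ is defined where claimed. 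No estimates are needed and there is no real obstacle; the ``hard part'' is merely keeping the index shift $k\leftrightarrow k-1$ consistent between $RF_{k-1}=F_k$ and the scaling maps carrying the superscript $k$.
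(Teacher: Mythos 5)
Your argument is correct and is essentially the paper's own proof: the first identity comes from unwinding $F_k=(\psi^k_v)^{-1}\circ F_{k-1}^2\circ\psi^k_v$ and regrouping $F_{k-1}^2\circ\psi^k_v = F_{k-1}\circ\psi^k_c$, and the second comes from applying $\pi_y$ and using that the second coordinate of a H\'enon-like map equals the first coordinate of its argument. The side remarks about $\pi_y\circ\psi^k_v=\si_{k-1}\pi_y$ and the tip translations are harmless but not needed.
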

\begin{proof}
Let us recall that $ \psi^k_v = H_{k-1} \circ \La_{k-1} $, $ \psi^k_c = F_{k-1} \circ \psi^k_v $ and $ F_k = (\psi^k_v)^{-1} \circ F_{k-1}^2 \circ \psi^k_v $. Then
\begin{equation} \label{eq-relation between psi-c,v}
\begin{aligned}
 \psi^k_v \circ F_k &= \psi^k_v \circ (\psi^k_v)^{-1} \circ F_{k-1}^2 \circ \psi^k_v \\[0.3em]
&= F_{k-1}^2 \circ \psi^k_v = F_{k-1} \circ \big[\,F_{k-1} \circ \psi^k_v \,\big] \\[0.3em] 
&= F_{k-1} \circ \psi^k_c
\end{aligned} \msk
\end{equation}
for $ k \in \N $. Take two points $ w = (x,y, \Bz) $ and $ w'= (x',y', \Bz') $ satisfying the equation $ w = F_k(w') $. Since $ F_k $ is a H\'enon-like map, we obtain
$$ (x,y,\Bz) = F(w') = (f(x') - \eps(w'),\; x',\; \bde(w')) . $$
Then $ \pi_y (F(w')) = \pi_x (w') $. Hence, the equation \eqref{eq-relation between psi-c,v} implies that $ \pi_y \circ \psi^k_v \circ F_k = \pi_x \circ \psi^k_c $.
\end{proof}
\msk
\begin{thm} \label{thm - invariant space NN}
The space $ \NN \cap \II_B(\oeps) $ is invariant under renormalization, that is, if $ F \in \NN \cap \II_B(\oeps) $, then $ RF \in \NN \cap \II_B(\oeps) $.
\end{thm}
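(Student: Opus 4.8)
The statement splits into two parts. That $\II_B(\oeps)$ is invariant under $R$ is already built into the renormalization theory recalled in the Preliminaries: if $F$ is infinitely renormalizable then so is $RF$, and the standard renormalization estimates keep $\|\eps_1\|$ and $\|\bde_1\|$ of size $O(\oeps)$ (see \cite{CLM, Nam1, Nam3, Nam4}). So the only real content is the implication $F\in\NN\Rightarrow RF\in\NN$, i.e.\ that $D\bde_1$ satisfies the defining identity of Definition \ref{def-definition of NN} for $F_1=RF$, which in the compact form \eqref{eq-another equation for NN} reads
$$ Y_1\circ F_1(w) + Z_1\circ F_1(w)\cdot X_1(w) = \mathbf{0} $$
for $w$ in the region $\psi^1_c(B(RF))\cup\psi^1_v(B(RF))$ attached to $RF$. (Equivalently one could phrase this for all levels $k$ at once, using Lemma \ref{lem-recursive formula of de-k}, but the single renormalization step is what is needed and it is cleanest to do just that.)

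\textbf{The computation.} First I would insert the recursion \eqref{eq-recursive formula of Dde_1} for $X_1,Y_1,Z_1$ and then precompose everything with $F_1$. The engine is Lemma \ref{lem-relation of F-k and F-k-1 with psi} at level $k=1$ (so $F_0=F$): it gives $\psi^1_v\circ F_1 = F\circ\psi^1_c$, hence also $\psi^1_c\circ F_1 = F\circ\psi^1_v\circ F_1 = F^2\circ\psi^1_c$, and it gives $\pi_y\circ\psi^1_v\circ F_1 = \pi_x\circ\psi^1_c$; moreover $\pi_y(F_1 w)=\pi_x(w)$ since $F_1$ is H\'enon-like. Writing $u=\psi^1_c(w)$ and $x=\pi_x(w)$, these identities turn the recursion into
$$ Y_1\circ F_1(w) = Z\big(F^2 u\big)\cdot\Big[\, Y\big(F u\big) + Z\big(F u\big)\cdot\Bq(\si_0 x) \,\Big], \qquad Z_1\circ F_1(w) = Z\big(F^2 u\big)\cdot Z\big(F u\big), $$
while $X_1(w) = X(u) - \Bq(\si_0 x)$ directly from \eqref{eq-recursive formula of Dde_1}.

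\textbf{Cancellation and conclusion.} Adding the two contributions, the term $Z(F^2u)\cdot Z(Fu)\cdot\Bq(\si_0 x)$ produced inside $Y_1\circ F_1$ is exactly cancelled by the $-Z(F^2u)\cdot Z(Fu)\cdot\Bq(\si_0 x)$ produced by $Z_1\circ F_1\cdot X_1$, leaving
$$ Y_1\circ F_1(w) + Z_1\circ F_1(w)\cdot X_1(w) = Z\big(F^2 u\big)\cdot\Big[\, Y\big(F u\big) + Z\big(F u\big)\cdot X\big(u\big) \,\Big]. $$
The bracket on the right is precisely the left-hand side of the $\NN$-identity \eqref{eq-another equation for NN} for $F$, evaluated at the point $u=\psi^1_c(w)$. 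So it suffices to check that $u$ lies in the set $\psi^1_c(B)\cup\psi^1_v(B)$ on which $F$ is \emph{assumed} to satisfy its $\NN$-equation; this holds because $\psi^1_c$ maps $B(RF)$ — and in particular its renormalization subdomains $\psi^1_{v}(B(RF))$, $\psi^1_{c}(B(RF))$ of $RF$ — into $\psi^1_c(B)\subseteq\psi^1_c(B)\cup\psi^1_v(B)$. Thus the bracket vanishes, the whole expression vanishes, and $RF\in\NN$.

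\textbf{Main obstacle.} The algebra above is forced once the intertwining relations of Lemma \ref{lem-relation of F-k and F-k-1 with psi} are in hand; the only genuinely delicate point I expect is the domain bookkeeping in the last step — verifying that the region on which the identity must hold for $RF$ pulls back under the nonlinear map $\psi^1_c$ into the region on which the identity for $F$ is guaranteed, and keeping straight which $\psi$'s and which cubes $B(\cdot)$ are being used at each level. Everything else is routine substitution and cancellation.
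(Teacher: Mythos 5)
Your proposal is correct and follows essentially the same route as the paper: the same recursion for $X_1,Y_1,Z_1$, the same intertwining identities $\psi^1_v\circ F_1=F\circ\psi^1_c$ and $\pi_y\circ\psi^1_v\circ F_1=\pi_x\circ\psi^1_c$ from Lemma \ref{lem-relation of F-k and F-k-1 with psi}, the same cancellation of the $\Bq$ terms, and the same final reduction to the $\NN$-identity for $F$ at the point $\psi^1_c(w)$. The only cosmetic difference is that the paper phrases the argument as an induction over all levels $k$, whereas you isolate the single renormalization step, which is all the theorem requires.
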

\begin{proof}
Let $ \pi_{\Bz} \circ R^kF $ be $ \bde_k $ and $ D\bde(w) = (X_k(w) \ Y_k(w) \ Z_k(w)) $ for $ k \in \N $. Suppose that
$$ Y_{k-1} \circ F_{k-1}(w) + Z_{k-1} \circ F_{k-1}(w) \cdot X_{k-1}(w) = \B0 $$
where $ w \in \psi^k_c(B) \cup \psi^k_v(B) $. By induction it suffice to show that 
$$ Y_{k} \circ F_{k}(w) + Z_{k} \circ F_{k}(w) \cdot X_{k}(w) = \B0 $$
where $ w \in \psi^{k+1}_c(B) \cup \psi^{k+1}_v(B) $. Observe that $ \si_{k-1}\;\! y = \pi_y \circ \psi^k_v(w) $ and \ssk $ \si_{k-1}\;\! x = \pi_x \circ \psi^k_c(w) $. By Lemma \ref{lem-relation of F-k and F-k-1 with psi}, we have that 
$ \pi_x \circ \psi^k_c(w) = \pi_y \circ \psi^k_v \circ F_{k}(w) $ and $ F_k \circ \psi^k_c(w) = \psi^k_v \circ F_{k}(w) $. Then
\msk
\begin{equation}
\begin{aligned}
&\quad \ \ Y_{k} \circ F_{k}(w) + Z_{k} \circ F_{k}(w) \cdot X_{k}(w) \\[0.6em]
&= \ Z_{k-1} \circ \psi^k_c \circ F_{k}(w) 
\cdot \big[\, Y_{k-1} \circ \psi^k_v \circ F_{k}(w) + Z_{k-1} \circ \psi^k_v \circ F_{k}(w) \cdot \Bq_{k-1} \circ \pi_y \circ \psi^k_v \circ F_{k}(w) \,\big] \\[0.3em]
&\qquad \ + Z_{k-1} \circ \psi^k_c \circ F_{k}(w) \cdot Z_{k-1} \circ \psi^k_v \circ F_{k}(w) 
\cdot \big[\, X_{k-1} \circ \psi^k_c(w) - \Bq_{k-1} \circ \pi_x \circ \psi^k_c(w) \,\big] \\[0.6em]
&= \ Z_{k-1} \circ \psi^k_c \circ F_{k}(w) \cdot Y_{k-1} \circ \psi^k_v \circ F_{k}(w) \\[0.3em]
&\qquad \ + Z_{k-1} \circ \psi^k_c \circ F_{k}(w) \cdot Z_{k-1} \circ \psi^k_v \circ F_{k}(w) \cdot X_{k-1} \circ \psi^k_c(w) \\[0.6em]
&= \ Z_{k-1} \circ \psi^k_c \circ F_{k}(w) \cdot \big[\, Y_{k-1} \circ \psi^k_v \circ F_{k}(w) + Z_{k-1} \circ \psi^k_v \circ F_{k}(w) \cdot X_{k-1} \circ \psi^k_c(w) \,\big] \\[0.5em]
&= \ \B0 .
\end{aligned} 
\end{equation}
For any point $ w \in \Dom(R^kF) $, we obtain \ssk that $ \psi^k_c(w) \in \psi^k_c(B) \cup \psi^k_v(B) $. Then $ F_k \in \NN \cap \II_B(\oeps) $. Hence, the space $ \NN \cap \II_B(\oeps) $ is invariant under renormalization.
\end{proof}

\bsk
\section{Universal numbers with $ \di_{\Bz} \bde $ and $ \di_y \eps $}
\subsection{Critical point and the recursive formula of $ \di_x \bde $}

\begin{prop}
Let $ F $ be the H\'enon-like map in $ \NN \cap \II_B(\oeps) $. Let $ \bde_k $ be $ \pi_{\Bz} \circ F_k $ for $ k \in \N $ and let $ X_k(w) $ be the column matrix of $ (\di_x \de^j_k) $ for $ 1 \leq j \leq m $. Then 
$$ X_n(w) = X_k \circ \Psi^n_{k,\,\cc}(w) - \sum_{i=k}^{n-1} \Bq_i \circ (\pi_x \circ \Psi^n_{i,\,\cc}(w)) $$
for $ k < n $. Moreover, passing the limit
$$ X_k(c_{F_k}) = \lim_{n \ra \infty} \sum_{i=k}^n \Bq_i \circ (\pi_x (c_{F_i})) $$
where $ c_{F_i} $ is the critical point of $ F_i $ for each $ k \leq i \leq n $.
\end{prop}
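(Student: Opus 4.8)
The plan is to obtain the first (finite) formula by iterating the one‑step recursion of Lemma~\ref{lem-recursive formula of de-k}, and then to obtain the limit formula by rearranging it and letting $n\to\infty$. The one preliminary I would isolate is the identity $\pi_x\circ\psi^k_c(w)=\si_{k-1}\,\pi_x(w)$ — exactly the observation used in the proof of Theorem~\ref{thm - invariant space NN}, valid because $\pi_x\circ H_{k-1}=\pi_x\circ F_{k-1}$, whence $\pi_x\circ F_{k-1}\circ H_{k-1}^{-1}=\pi_x$ and $\psi^k_c=F_{k-1}\circ H_{k-1}^{-1}\circ\La_{k-1}^{-1}$. Substituting this into the first line of Lemma~\ref{lem-recursive formula of de-k} rewrites the recursion as $X_j(w)=X_{j-1}\circ\psi^j_c(w)-\Bq_{j-1}\circ(\pi_x\circ\psi^j_c(w))$ for $k<j\le n$; note this already uses $F\in\NN$, since that hypothesis is what collapsed the mixed terms in Lemma~\ref{lem-recursive formula of de-k}.

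Next I would prove $X_n(w)=X_k\circ\Psi^n_{k,\,\cc}(w)-\sum_{i=k}^{n-1}\Bq_i\circ(\pi_x\circ\Psi^n_{i,\,\cc}(w))$ by induction on $n\ge k+1$. The base case $n=k+1$ is the rewritten recursion with $j=k+1$, since $\Psi^{k+1}_{k,\,\cc}=\psi^{k+1}_c$. For the inductive step, apply the recursion once to pass from $X_{n+1}(w)$ to $X_n\circ\psi^{n+1}_c(w)$, apply the inductive hypothesis to $X_n$ evaluated at the point $\psi^{n+1}_c(w)$, and then use the composition identity $\Psi^n_{i,\,\cc}\circ\psi^{n+1}_c=\Psi^{n+1}_{i,\,\cc}$ (and $\psi^{n+1}_c=\Psi^{n+1}_{n,\,\cc}$) to reindex; the boundary term produced is precisely the $i=n$ summand, giving the formula with $n$ replaced by $n+1$.

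For the limit statement, rewrite the finite formula as $X_k\circ\Psi^n_{k,\,\cc}(w)=X_n(w)+\sum_{i=k}^{n-1}\Bq_i\circ(\pi_x\circ\Psi^n_{i,\,\cc}(w))$ and let $n\to\infty$, taking for $w$ any point $w_n\in B(R^nF)$. Three facts drive the passage. First, $X_n\to 0$ uniformly: since $R^nF\to F_*$ exponentially fast and $\bde_*\equiv\B0$, one has $\|\bde_n\|=O(\rho^n)$, and Cauchy estimates on the uniform domain of analyticity give $\|X_n\|\le\|D\bde_n\|=O(\rho^n)$; the same estimate applied to $\Bq_i(y)=\frac{d}{dy}\bde_i(y,f_i^{-1}(y),0)$ gives $\|\Bq_i\|=O(\rho^i)$, so $\sum_i\|\Bq_i\|<\infty$. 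Second, the nested pieces $\Psi^n_{k,\,\cc}(B(R^nF))$ have diameter tending to $0$ as $n\to\infty$ (this is built into the construction of the Cantor set $\OO_{F_k}$; cf. \cite{CLM, Nam4}), so $\Psi^n_{k,\,\cc}(w_n)$ converges to a single point independent of the choices, namely the critical point $c_{F_k}$ of $F_k$, and likewise $\pi_x\circ\Psi^n_{i,\,\cc}(w_n)\to\pi_x(c_{F_i})$ for each fixed $i$. Third, the interchange of $\lim_n$ with the series is a routine double‑limit argument using the tail bound from the first point: split $\sum_{i=k}^{n-1}=\sum_{i=k}^{N}+\sum_{i=N+1}^{n-1}$, send $n\to\infty$ termwise in the finite first sum, bound the second by $\sum_{i>N}\|\Bq_i\|$, and then send $N\to\infty$. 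Taking $n\to\infty$ on both sides and using continuity of $X_k$ yields $X_k(c_{F_k})=\lim_{n\to\infty}\sum_{i=k}^{n}\Bq_i\circ(\pi_x(c_{F_i}))$.

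The step I expect to be the main obstacle is this last limit passage: identifying $\lim_n\Psi^n_{k,\,\cc}(w)$ with the critical point $c_{F_k}$ (reconciling it with the definition of the critical point in force) and, above all, securing the exponential a priori estimates $\|\bde_n\|=O(\rho^n)$ and $\|\Bq_i\|=O(\rho^i)$ on uniform analytic domains, which rest on the exponential convergence $R^nF\to F_*$ together with analyticity and carry all the analytic content. By contrast, the iteration producing the finite formula is purely formal.
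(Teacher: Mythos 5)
Your proposal is correct and follows essentially the same route as the paper: iterate the one-step recursion of Lemma~\ref{lem-recursive formula of de-k} using $\si_{k-1}x=\pi_x\circ\psi^k_c(w)$, use the smallness of $X_n$ to pass to the limit, and identify the limit point via Corollary~\ref{cor-image of critical point under Psi}. The only (harmless) deviations are that you use the exponential bound $O(\rho^n)$ from convergence to $F_*$ where the paper invokes $\|X_n\|=O(\oeps^{2^n})$, and you take arbitrary $w_n$ with a tail-splitting argument where the paper simply evaluates at $c_{F_n}$ so that $\Psi^n_{i,\,\cc}(c_{F_n})=c_{F_i}$ holds exactly.
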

\begin{proof}
By Lemma \ref{lem-recursive formula of de-k}, we have \msk
\begin{equation} \label{eq-X-k and X-n at the critical point}
\begin{aligned}
X_n(w) &= X_{n-1} \circ \psi^n_c(w) - \Bq_{n-1} \circ (\si_{n-1} x) \\[0.2em]
&= X_{n-1} \circ \psi^n_c(w) - \Bq_{n-1} \circ (\pi_x \circ \psi^n_c(w)) \\[0.2em]
& \hspace{1in} \vdots \\
&= X_k \circ \Psi^n_{k,\,\cc}(w) - \sum_{i=k}^{n-1} \Bq_i \circ (\pi_x \circ \Psi^n_{i,\,\cc}(w))
\end{aligned} \msk
\end{equation}
for $ k < n $. Since $ \| X_n \| = O(\oeps^{2^n}) $, passing the limit we have the following equation
\msk
\begin{equation}
\lim_{n \ra \infty} X_k \circ \Psi^n_{k,\,\cc}(w) = \lim_{n \ra \infty} \;\sum_{i=k}^{n-1} \Bq_i \circ (\pi_x \circ \Psi^n_{i,\,\cc}(w)) .
\end{equation}

\nin By Corollary \ref{cor-image of critical point under Psi}, we obtain that
\msk
\begin{equation*}
\begin{aligned}
X_k(c_{F_k}) &= \lim_{n \ra \infty} \; \sum_{i=k}^{n-1} \Bq_i \circ (\pi_x \circ \Psi^n_{i,\,\cc}(w)) \\
& = \lim_{n \ra \infty} \; \sum_{i=k}^{n-1} \Bq_i \circ (\pi_x(c_{F_i})) .
\end{aligned}
\end{equation*}
\end{proof}

\begin{rem}
Putting the critical point $ c_{F_n} $ at the equation \eqref{eq-X-k and X-n at the critical point}, we obtain that
\begin{equation} \label{eq-formula of X-k at the critical point}
X_k(c_{F_k}) = \sum_{i=k}^{n-1} \Bq_i \circ (\pi_x(c_{F_i})) + X_n(c_{F_n})
\end{equation}
\end{rem}

\msk
\subsection{Universal numbers $ b_{\Bz} $ with the asymptotic of $ Z(w) $}

\begin{prop} \label{prop-universal number b-Z}
Let $ F $ be the H\'enon-like diffeomorphism in $ \NN \cap \II_B(\oeps) $. Let $ \pi_{\Bz} \circ F_k $ be $ \bde_k $ for $ k \in \N $ and $ m \times m $ matrix, $ Z_k(w) $ be $ \di_{z_i} \de_k^j(w) $ for $ 1 \leq i,j \leq m $. Suppose that $ \det Z(w) $ is non zero for all $ w \in \psi^1_c(B) \cup \psi^1_v(B) $. Then
$$ | \det Z_n(w) | = b_{\Bz}^{2^n}(1 + O(\rho^n)) $$
where $ b_{\Bz} $ is a universal positive number for each $ n \in \N $ and for some $ 0 < \rho < 1 $.
\end{prop}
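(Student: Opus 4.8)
The plan is to run the purely multiplicative recursion for the matrices $ Z_k $ that is available precisely because $ F\in\NN $. By Lemma~\ref{lem-recursive formula of de-k}, $ Z_k(w) = Z_{k-1}\circ\psi^k_c(w)\cdot Z_{k-1}\circ\psi^k_v(w) $; since the determinant is multiplicative, iterating this identity down to level $ 0 $ gives
\[
\det Z_n(w) \ = \ \prod_{{\bf w}\in W^n}\det Z\big(\Psi^n_{0,\,{\bf w}}(w)\big),
\]
the product running over all $ 2^n $ words $ {\bf w}\in W^n=\{v,c\}^n $, with $ \Psi^n_{0,\,{\bf w}} $ the composition of the $ n $ nonlinear scalings attached to $ {\bf w} $ as in \eqref{eq-psi-v,c composition}, so that $ \Psi^n_{0,\,{\bf w}}(w)\in B^n_{\bf w} $ for $ w\in\Dom(R^nF) $. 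Since every $ B^n_{\bf w} $ nests into $ B^1_{w_1}\subset\psi^1_c(B)\cup\psi^1_v(B) $ and $ \det Z $ is analytic and, by hypothesis, bounded away from $ 0 $ on the compact set $ \psi^1_c(B)\cup\psi^1_v(B) $, the function $ g:=\log|\det Z| $ is well defined and Lipschitz there, with some constant $ L $. Hence $ \det Z_n(w)\ne 0 $ for all $ w\in\Dom(R^nF) $, and $ \log|\det Z_n(w)| = \sum_{{\bf w}\in W^n}g\big(\Psi^n_{0,\,{\bf w}}(w)\big) $.

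First I would establish uniformity in $ w $. By the a priori bounds of \cite{CLM,Nam1,Nam4} the pieces shrink geometrically, $ \diam B^n_{\bf w}\le C\bar\rho^{\,n} $, with a uniform constant $ C $ and a rate $ \bar\rho\in(0,1/2) $ (in the horizontal direction this is the one-dimensional scaling factor $ \si<1/2 $, and the strongly contracted directions are even faster). Comparing the two sums term by term — for $ w,w'\in\Dom(R^nF) $ the points $ \Psi^n_{0,\,{\bf w}}(w) $ and $ \Psi^n_{0,\,{\bf w}}(w') $ lie in the same piece $ B^n_{\bf w} $ — we get
\[
\big|\,\log|\det Z_n(w)| - \log|\det Z_n(w')|\,\big| \ \le\ 2^n\cdot L\cdot C\bar\rho^{\,n} \ =\ O\big((2\bar\rho)^{\,n}\big).
\]
Fixing the tip $ \tau_n $ of $ R^nF $ as reference point and writing $ \zeta_n:=\det Z_n(\tau_n) $, this says $ |\det Z_n(w)| = |\zeta_n|\,\big(1+O((2\bar\rho)^{\,n})\big) $ uniformly in $ w $ — which is exactly why no $ w $-dependent factor survives, in contrast with the average-Jacobian asymptotics $ \Jac R^nF = b_F^{2^n}a(x)(1+O(\rho^n)) $, where the horizontal coordinate does survive. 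Evaluating at $ w=\tau_n $ in the level-$ n $ recursion $ \det Z_n(w)=\det Z_{n-1}(\psi^n_c(w))\cdot\det Z_{n-1}(\psi^n_v(w)) $ and applying the level-$ (n-1) $ uniformity to the two factors (both $ \psi^n_c(\tau_n),\psi^n_v(\tau_n)\in\Dom(R^{n-1}F) $) yields the scalar recursion $ |\zeta_n| = |\zeta_{n-1}|^2\,(1+\eps_{n-1}) $ with $ |\eps_{n-1}| = O\big((2\bar\rho)^{\,n-1}\big) $.

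Writing $ L_n:=\log|\zeta_n| $, this is $ L_n = 2L_{n-1}+\eta_{n-1} $ with $ |\eta_{n-1}|=O((2\bar\rho)^{\,n-1}) $, so $ L_n = 2^n\big(L_0+\tfrac12\sum_{k=0}^{n-1}2^{-k}\eta_k\big) $. The series $ \sum_{k\ge 0}2^{-k}\eta_k $ converges absolutely (its terms are $ O(\bar\rho^{\,k}) $), so I would define $ \log b_{\Bz}:=L_0+\tfrac12\sum_{k\ge 0}2^{-k}\eta_k $; equivalently $ b_{\Bz}=\lim_n|\det Z_n(w)|^{1/2^n}=\exp\int_{\OO_F}\log|\det Z|\,d\mu $, a positive number attached to $ F $ that does not depend on $ w $ or on the level at which it is read off — this is the sense in which it is ``universal'', paralleling the status of the average Jacobian $ b_F $. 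The tail estimate $ 2^{n-1}\sum_{k\ge n}2^{-k}|\eta_k| = O((2\bar\rho)^{\,n}) $ then gives $ L_n = 2^n\log b_{\Bz}+O((2\bar\rho)^{\,n}) $, i.e. $ |\zeta_n| = b_{\Bz}^{2^n}\big(1+O((2\bar\rho)^{\,n})\big) $; combining with the uniformity estimate and setting $ \rho:=2\bar\rho\in(0,1) $ gives $ |\det Z_n(w)| = b_{\Bz}^{2^n}\big(1+O(\rho^n)\big) $ for all $ w $, as claimed. This is the $ m $-dimensional analogue of the three-dimensional computation in \cite{Nam2} and follows the same scheme as the proof of the universal asymptotics of $ \Jac R^nF $ (Theorem~5.10 of \cite{Nam4}).

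The main obstacle is the geometric input $ \diam B^n_{\bf w}\le C\bar\rho^{\,n} $ with rate $ \bar\rho<1/2 $ and a constant $ C $ uniform in $ n $ and $ {\bf w} $: this is where the a priori / bounded-geometry estimates for the renormalization tower are needed, and it is essentially the only analytic ingredient beyond the recursion. Together with the uniform lower bound on $ |\det Z| $ furnished by the non-vanishing hypothesis — which is what makes $ g=\log|\det Z| $ Lipschitz on $ \psi^1_c(B)\cup\psi^1_v(B)\supset\bigcup_n\bigcup_{{\bf w}\in W^n}B^n_{\bf w} $ — everything else reduces to the bookkeeping of the recursion above.
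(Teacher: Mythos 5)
Your proof is correct and rests on the same two pillars as the paper's: the purely multiplicative recursion $Z_n(w)=\prod_{{\bf w}\in W^n}Z\circ\Psi^n_{{\bf w}}(w)$ coming from Lemma \ref{lem-recursive formula of de-k}, and the geometric decay $\diam(\Psi^n_{{\bf w}}(B))\le C\si^n$ with $2\si<1$. The one place where you genuinely diverge is in how the limit is extracted: the paper passes to the logarithmic averages $l_n(w)=2^{-n}\log|\det Z_n(w)|$ and invokes the unique ergodic measure $\mu$ on $\OO_F$ to conclude that the limit exists and is the constant $\int_{\OO_F}\log|\det Z|\,d\mu$, whereas you avoid ergodic theory entirely by proving a uniformity-in-$w$ estimate, reading off the scalar recursion $|\zeta_n|=|\zeta_{n-1}|^2(1+\eps_{n-1})$ at the tip, and telescoping. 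Your route is more self-contained and, in one respect, more careful: the paper's conversion of $2^n\cdot O(\rho_0^n)$ into $O(\rho^n)$ via ``take $\rho=\rho_0/2$'' is not literally valid as written (one needs the decay rate of the pieces to beat $1/2$, not to halve $\rho_0$), and your explicit bookkeeping with $\bar\rho<1/2$ and $\rho=2\bar\rho$ is exactly the correct fix. What the paper's ergodic formulation buys in exchange is the intrinsic identification $\log b_{\Bz}=\int_{\OO_F}\log|\det Z|\,d\mu$ as the definition rather than as an afterthought, which makes the analogy with the average Jacobian $b_F$ immediate. In both treatments the only external inputs are the a priori bounds on the renormalization tower and the hypothesis that $\det Z$ does not vanish on $\psi^1_c(B)\cup\psi^1_v(B)$, which gives the uniform lower bound needed to take logarithms.
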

\begin{proof}
For the map $ F_n \in \NN \cap \II_B(\oeps) $, Definition \ref{def-definition of NN} implies that
\begin{equation*}
\frac{\di}{\di z_i}\, \de^j_n(w) = \sum_{l=1}^m \di_{z_l} \de^j_{n-1} \circ \psi^n_c(w) \cdot \di_{z_i} \de^l_{n-1} \circ \psi^n_v(w)
\end{equation*}
for $ n \in \N $. Since $ \dfrac{\di}{\di z_i}\, \de^j_n(w) $ is the $ (j,i) $ element of the matrix $ Z_n(w) $, it is the product of the $ j^{th} $ row of $ Z_{n-1} \circ \psi^n_c(w) $ and $ i^{th} $ column of $ Z_{n-1} \circ \psi^n_v(w) $. Then the equation \eqref{eq-recursive formula of Dde_1} is generalized with $ n \in \N $ with inductive calculation \msk
\begin{equation}
\begin{aligned}
Z_n(w) &= Z_{n-1} \circ \psi^n_c(w) \cdot Z_{n-1} \circ \psi^n_v(w) \\[0.5em]
&= Z_{n-2} \circ \psi^{n-1}_c \circ \psi^n_c(w) \cdot Z_{n-2} \circ \psi^{n-1}_v \circ \psi^n_c(w) \\[0.3em]
& \qquad \cdot Z_{n-2} \circ \psi^{n-1}_c \circ \psi^n_v(w) \cdot Z_{n-2} \circ \psi^{n-1}_v \circ \psi^n_v(w) \\[0.2em]
&\hspace{1in} \vdots \\
& = \prod_{\ww \in W^n} Z \circ \Psi^n_{\ww}(w)
\end{aligned} \msk
\end{equation}
where $ {\bf w} $ is a word in $ W^n $ which is the set of Cartesian product of the letters $ \{v,\,c\}^n $. Let us take the logarithmic average of $ | \det Z_n(w) | $ as follows
\begin{equation*}
\begin{aligned}
l_n(w) &= \frac{1}{\;2^n} \,\log \,\big|\;\! \det Z_n(w) \;\!\big| \\[0.3em]
&= \frac{1}{\;2^n} \,\sum_{\ww \in \;\! W^n} \log \,\big|\;\! \det Z \circ \Psi^n_{\ww}(w) \;\!\big| .
\end{aligned}
\end{equation*}

\nin By the continuity of the determinant of matrix and the compactness of the domain, we may assume that $ \det Z_n(w) $ has its positive lower bound or negative upper bound. 
The the limit $ l_n(w) $ exists as $ n \ra n $ on the critical Cantor set $ \OO_F $ where $ \mu $ is the unique ergodic probability measure on $ \OO_F $. The uniqueness of $ \mu $ implies that the limit is a constant function. Let this constant be $ \log b_{\Bz} $ for some $ b_{\Bz} > 0 $, that is,
\begin{equation} \label{eq-convergence to log b-z}
l_n(w) \lra \int_{\OO_F} \log \big| \det Z(w) \big| \;d\mu \equiv \log b_{\Bz} .
\end{equation}

\nin Since $ \diam \,(\Psi^n_{\ww}(B)) \leq C \si^n $ for all $ \ww \in W^n $ and for some $ C>0 $, the $ l_n $ in \eqref{eq-convergence to log b-z} converges exponentially fast. In other words,
\begin{equation*}
\frac{1}{\;2^n} \,\log \,\big|\;\! \det Z_n(w) \;\!\big| = \log b_{\Bz} + O(\rho^n_0)
\end{equation*}
for some $ 0 < \rho_0 < 1 $. Take the constant $ \rho = \rho_0/2 $. Then \msk
\begin{equation*}
\begin{aligned}
\log \,\big|\:\! \det Z_n(w) \;\!\big| & = 2^n \log b_{\Bz} + O(\rho^n) \\[0.2em]
&= 2^n \log b_{\Bz} + \log (1 + O(\rho^n)) \\[0.2em]
&= \log b_{\Bz}^{2^n} (1 + O(\rho^n)) .
\end{aligned} \msk
\end{equation*}
Hence, 
\begin{equation}
\big|\:\! \det Z_n(w) \;\!\big| = b_{\Bz}^{2^n} (1 + O(\rho^n)).
\end{equation}
The proof is complete.
\end{proof}
\ssk

\begin{lem} \label{lem-relation between Y-n and Z-n with Y-k and Z-k}
Let $ F $ be the H\'enon-like diffeomorphism in $ \NN \cap \II_B(\oeps) $. Let $ D\bde_n $ be $ (X_n \ Y_n \ Z_n ) $. Then
$$ Y_n(w) = Z_n(w) \cdot \Big[\, \big(Z_k \circ \Psi^n_{k,\,\vv}(w) \big)^{-1} \cdot \big(Y_k \circ \Psi^n_{k,\,\vv}(w) \big) + \sum_{i=k}^{n-1} \Bq_i \circ (\pi_y \circ \Psi^n_{i,\,\vv}(w)) \,\Big] $$
for each $ n \in \N $. Moreover, 
$$ \big(Z_k \circ \Psi^n_{k,\,\vv}(w) \big)^{-1} \cdot \big(Y_k \circ \Psi^n_{k,\,\vv}(w) \big) + \sum_{i=k}^{n-1} \Bq_i \circ (\pi_y \circ \Psi^n_{i,\,\vv}(w)) $$
converges to $ \B0 $ exponentially fast as $ n \ra \infty $. In particular, each coordinate of above expression is less than $ C \si^{n-k} $ for some $ C>0 $ independent of $ k $.
\end{lem}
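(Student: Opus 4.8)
The plan is to obtain the displayed identity by iterating the recursion of Lemma \ref{lem-recursive formula of de-k}, and then to read off the decay from the defining equation of $\NN$ applied to the renormalizations $F_n$.

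\emph{The identity.} Since $F$ is a diffeomorphism in $\NN\cap\II_B(\oeps)$, Proposition \ref{prop-universal number b-Z} gives $|\det Z_j(w)|=b_{\Bz}^{2^j}(1+O(\rho^j))\neq 0$, so each $Z_j$ is invertible on the relevant domains; put $V_j:=Z_j^{-1}Y_j$. Dividing the two displayed recursions of Lemma \ref{lem-recursive formula of de-k} and using $Z_j^{-1}=\big(Z_{j-1}\circ\psi^j_v\big)^{-1}\big(Z_{j-1}\circ\psi^j_c\big)^{-1}$, the common factor $Z_{j-1}\circ\psi^j_c$ cancels, and since $\si_{j-1}y=\pi_y\circ\psi^j_v(w)$ this leaves the one-step relation $V_j(w)=V_{j-1}\circ\psi^j_v(w)+\Bq_{j-1}\circ(\pi_y\circ\psi^j_v(w))$. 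Iterating from $j=k+1$ up to $j=n$ and recalling $\Psi^n_{i,\,\vv}=\psi^{i+1}_v\circ\cdots\circ\psi^n_v$ gives $V_n=V_k\circ\Psi^n_{k,\,\vv}+\sum_{i=k}^{n-1}\Bq_i\circ(\pi_y\circ\Psi^n_{i,\,\vv})$; multiplying on the left by $Z_n$ is the asserted formula for $Y_n$. (One can also prove the bracketed formula directly by induction on $n$, the base case $n=k+1$ being Lemma \ref{lem-recursive formula of de-k} itself after one division.)

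\emph{The decay.} Write $E_{n,k}(w)$ for the bracketed expression; the identity just proved shows $E_{n,k}=V_n$, independently of $k$. By Theorem \ref{thm - invariant space NN}, $F_n\in\NN\cap\II_B(\oeps)$ for every $n$, so for $w'\in\psi^{n+1}_c(B)\cup\psi^{n+1}_v(B)$ one has $Y_n\circ F_n(w')+Z_n\circ F_n(w')\cdot X_n(w')=\B0$, that is $V_n(F_n(w'))=-X_n(w')$. Since $\|X_n\|=O(\oeps^{2^n})$ (the bound used in the proof of the proposition on $\di_x\bde$), this forces $\|V_n\|=O(\oeps^{2^n})$ on $F_n\big(\psi^{n+1}_c(B)\cup\psi^{n+1}_v(B)\big)$; using $\psi^{n+1}_c=F_n\circ\psi^{n+1}_v$, the identity $F_n^2\circ\psi^{n+1}_v=\psi^{n+1}_v\circ F_{n+1}$ from Lemma \ref{lem-relation of F-k and F-k-1 with psi} and $\OO_{F_{n+1}}\subset F_{n+1}(B)$, one checks this set covers the pieces carrying $\OO_{F_n}$, in particular the tip $\tau_n$. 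Feeding this into the one-step relation together with $\|\Bq_{n-1}\|=O(\oeps^{2^{n-1}})$ (from $\Bq_{n-1}(y)=X_{n-1}(y,f_{n-1}^{-1}(y),\B0)+Y_{n-1}(y,f_{n-1}^{-1}(y),\B0)(f_{n-1}^{-1})'(y)$ and $\|X_{n-1}\|,\|Y_{n-1}\|=O(\oeps^{2^{n-1}})$) propagates the bound to all of $\Dom(R^nF)$. Hence $E_{n,k}\to\B0$ doubly exponentially, and since $\si^{n-k}\ge\si^n$ while $\oeps^{2^n}=O(\si^n)$, each coordinate of $E_{n,k}(w)$ is in particular less than $C\si^{n-k}$ with $C$ independent of $k$.

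\emph{Main obstacle.} The delicate point is that $\|Z_n^{-1}\|$ blows up like $b_{\Bz}^{-2^n}$, so the smallness of $V_n=Z_n^{-1}Y_n$ cannot be seen from $\|Y_n\|$ alone; it has to be extracted from the structural identity $V_n\circ F_n=-X_n$ supplied by membership in $\NN$, which displays $V_n$ directly as an object of size $\|X_n\|=O(\oeps^{2^n})$ — equivalently, in the one-step relation the terms $V_{n-1}\circ\psi^n_v$ and $\Bq_{n-1}\circ\pi_y\circ\psi^n_v$ nearly cancel because the $\NN$-equation forces $\Bq_{n-1}\approx-V_{n-1}$ on the scale that matters. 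Keeping track of the various $\psi$- and $F$-images so that $V_n$ is always evaluated where this control is available is the only bookkeeping requiring care; everything else runs as in the proof of the proposition on $X_k(c_{F_k})$.
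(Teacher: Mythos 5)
Your derivation of the identity is correct and is essentially the paper's: setting $V_j=Z_j^{-1}Y_j$, the one‑step relation $V_j(w)=V_{j-1}\circ\psi^j_v(w)+\Bq_{j-1}\circ(\pi_y\circ\psi^j_v(w))$ follows from Lemma \ref{lem-recursive formula of de-k} exactly as you say, and iterating it is what the paper does (in un‑inverted form).

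The decay argument, however, has a genuine gap. The $\NN$‑identity $V_n\circ F_n=-X_n$ holds only for $w'\in\psi^{n+1}_c(B)\cup\psi^{n+1}_v(B)$, so it controls $V_n$ only on $F_n\big(\psi^{n+1}_c(B)\big)\cup F_n\big(\psi^{n+1}_v(B)\big)=\psi^{n+1}_v(F_{n+1}(B))\cup\psi^{n+1}_c(B)$ — a neighborhood of the Cantor set inside two first‑generation boxes — and \emph{not} on all of $\Dom(R^nF)$. The lemma is needed at arbitrary points of $\Psi^n_{k,\,\vv}(B)$ (it is later applied at mean‑value points $\zeta$ in Lemma \ref{upper bound the distance of boxes} and in Corollary \ref{cor-Y-k Z-k and q estimation}), so this restriction matters. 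Your proposed propagation does not close the gap: the one‑step relation expresses $V_n$ on $B(R^nF)$ through $V_{n-1}$ on the \emph{whole} box $\psi^n_v(B)$, which is again not covered by the level‑$(n-1)$ identity (only $\psi^n_v(F_n(B))$ is); iterating the one‑step relation all the way down merely reproduces the identity you are trying to estimate, so the argument becomes circular. Moreover, even where it applies, the added term $\Bq_{n-1}\circ(\pi_y\circ\psi^n_v(w))$ is only $O(\oeps^{2^{n-1}})$ and varies over the box $\psi^n_v(B)$, whose diameter is $\asymp\si$ and does not shrink with $n$; so a uniform bound $\|V_n\|=O(\oeps^{2^n})$ on $\Dom(R^nF)$ cannot be extracted this way (and is not what the lemma asserts). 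The paper's route avoids all of this: fix $k$, let $n\ra\infty$, note $\Psi^n_{k,\,\vv}(w)\ra\tau_k$ and identify $\lim\sum_{i=k}^{n-1}\Bq_i\circ(\pi_y\circ\Psi^n_{i,\,\vv}(w))=\lim\sum\Bq_i\circ(\pi_x(c_{F_i}))=X_k(c_{F_k})$ via the $X$‑recursion and $\pi_y(\tau_i)=\pi_x(c_{F_i})$, then invoke the $\NN$‑identity at the \emph{single} point $c_{F_k}$ (where it is available, since $F_k(c_{F_k})=\tau_k$) to cancel this against $(Z_k(\tau_k))^{-1}Y_k(\tau_k)$; the rate $\si^{n-k}$ then comes from $\diam(\Psi^n_{k,\,\vv}(B))\le C\si^{n-k}$. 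You should replace your decay step by this limit‑and‑cancellation argument.
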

\begin{proof}
Lemma \ref{lem-recursive formula of de-k} implies \msk
\begin{equation} \label{eq-recursive formula of Z-n Y-n again}
\begin{aligned}
Y_n(w) &= Z_{n-1} \circ \psi^n_c(w) \cdot Y_{n-1} \circ \psi^n_v(w) + Z_{n-1} \circ \psi^n_c(w) \cdot Z_{n-1} \circ \psi^n_v(w) \cdot \Bq_{n-1} \circ (\si_{n-1} y) \\[0.2em]
&= Z_{n-1} \circ \psi^n_c(w) \cdot Y_{n-1} \circ \psi^n_v(w) + Z_{n} \circ \psi^n_v(w) \cdot \Bq_{n-1} \circ (\pi_y \circ \psi^n_v(w)) \,\big] \\[0.5em]
Z_n(w) &= Z_{n-1} \circ \psi^n_c(w) \cdot Z_{n-1} \circ \psi^n_v(w)
\end{aligned} \msk
\end{equation}
for $ n \in \N $. Thus by the inductive calculation, $ Y_n(w) $ as follows 
\begin{align*}
& \quad \ Y_n(w) \\[0.5em]
&= Z_{n-1} \circ \psi^n_c(w) \cdot Y_{n-1} \circ \psi^n_v(w) + Z_{n} \circ \psi^n_v(w) \cdot \Bq_{n-1} \circ (\pi_y \circ \psi^n_v(w)) \\[0.6em]
&= Z_{n-1} \circ \psi^n_c(w) \cdot \big[\, Z_{n-2} \circ (\psi^{n-1}_c \circ \psi^n_v)(w) \cdot Y_{n-2} \circ (\psi^{n-1}_v \circ \psi^n_v)(w) \\[0.2em]
&\qquad + Z_{n-1} \circ \psi^{n}_v(w) \cdot \Bq_{n-2} \circ (\pi_y \circ \psi^{n-1}_v \circ \psi^n_v(w)) \,\big] 
+ Z_{n} \circ \psi^n_v(w) \cdot \Bq_{n-1} \circ (\pi_y \circ \psi^n_v(w)) \\[0.6em]
&= Z_{n-1} \circ \psi^n_c(w) \cdot Z_{n-2} \circ (\psi^{n-1}_c \circ \psi^n_v)(w) \cdot Y_{n-2} \circ (\psi^{n-1}_v \circ \psi^n_v)(w) \\[0.2em]
&\qquad + Z_n(w) \cdot \big[\, \Bq_{n-2} \circ (\pi_y \circ \psi^{n-1}_v \circ \psi^n_v(w)) + \Bq_{n-1} \circ (\pi_y \circ \psi^n_v(w)) \,\big] \\[0.2em]
&\hspace{2in} \vdots \\[0.2em]
&= Z_{n-1} \circ \psi^n_c(w) \cdot Z_{n-2} \circ (\psi^{n-1}_c \circ \psi^n_v)(w) \cdots Z_k \circ (\psi^{k+1}_c \circ \psi^{k+2}_v \circ \cdots \circ \psi^n_v(w)) \\
& \qquad \cdot Y_k \circ \Psi^n_{k,\, \vv}(w) 
 + Z_n(w) \cdot \sum_{i=k}^{n-1} \Bq_i \circ (\pi_y \circ \Psi^n_{i,\,\vv}(w)) \\[-0.4em]
(*) \ \ &= Z_n(w) \cdot \big( Z_k \circ \Psi^n_{k,\,\vv}(w) \big)^{-1} \cdot Y_k \circ \Psi^n_{k,\,\vv}(w) + Z_n(w) \cdot \sum_{i=k}^{n-1} \Bq_i \circ (\pi_y \circ \Psi^n_{i,\,\vv}(w)) \\[-0.4em]
&= Z_n(w) \cdot \Big[\, \big( Z_k \circ \Psi^n_{k,\,\vv}(w) \big)^{-1} \cdot Y_k \circ \Psi^n_{k,\,\vv}(w) + \sum_{i=k}^{n-1} \Bq_i \circ (\pi_y \circ \Psi^n_{i,\,\vv}(w)) \,\Big] 
\end{align*} \msk
for $ k<n $. The second equation in \eqref{eq-recursive formula of Z-n Y-n again} implies $ (*) $ in the above equation. Recall the equation \eqref{eq-X-k and X-n at the critical point}
\begin{equation*}
\begin{aligned}
X_k \circ \Psi^n_{k,\,\cc}(w) = \sum_{i=k}^{n-1} \Bq_i \circ (\pi_x \circ \Psi^n_{i,\,\cc}(w)) + X_n(w)
\end{aligned} \msk
\end{equation*}
Thus the expression
$$ \sum_{i=k}^{n-1} \Bq_i \circ (\pi_x \circ \Psi^n_{i,\,\cc}(w)) = \sum_{i=k}^{n-1} \Bq_i \circ ( \si_{n,\,i}\;\! x) $$
converges to the value $ X_k(c_{F_k}) $ exponentially fast as $ n \ra \infty $ where $ \si_{n,\,i} = \si_n \cdot \si_{n-1} \cdot \si_i $. Thus the expression
$$ \sum_{i=k}^{n-1} \Bq_i \circ (\pi_y \circ \Psi^n_{i,\,\vv}(w)) = \sum_{i=k}^{n-1} \Bq_i \circ (\si_{n,\,i}\;\! y) $$
also converges to a single value exponentially fast as \, $ n \ra \infty $. Moreover, since $ \Psi^n_{i,\,\vv}(B) $ contains $ \tau_i $ for all $ i < n $, we have the following equation
\begin{equation*}
\lim_{n \ra \infty} \sum_{i=k}^{n-1} \Bq_i \circ (\pi_y \circ \Psi^n_{i,\,\vv}(w)) = \lim_{n \ra \infty} \sum_{i=k}^{n-1} \Bq_i \circ (\pi_y (\tau_i)) .
\end{equation*} \msk

\nin Recall that if $ F \in \II_B(\oeps) $, then $ F_j(c_{F_j}) = \tau_j $ for $ j \in \N $. By the equation \eqref{eq-formula of X-k at the critical point}, we obtain that
\begin{align*}
&\quad \ \lim_{n \ra \infty} \Big[\,\big( Z_k \circ \Psi^n_{k,\,\vv}(w) \big)^{-1} \cdot Y_k \circ \Psi^n_{k,\,\vv}(w) + \sum_{i=k}^{n-1} \Bq_i \circ (\pi_y \circ \Psi^n_{i,\,\vv}(w)) \,\Big] \\
&= \ \big( Z_k(\tau_k) \big)^{-1} \cdot Y_k (\tau_k) + \lim_{n \ra \infty}\sum_{i=k}^{n-1} \Bq_i \circ (\pi_y (\tau_i)) \\
& = \ \big( Z_k(\tau_k) \big)^{-1} \cdot Y_k (\tau_k) + \lim_{n \ra \infty}\sum_{i=k}^{n-1} \Bq_i \circ (\pi_x (c_{F_i})) \\[0.2em]
& = \ \big( Z_k(\tau_k) \big)^{-1} \cdot Y_k (\tau_k) + X_k(c_{F_k}) - \lim_{n \ra \infty}X_n(c_{F_n}) \\[0.6em]
& = \ \B0
\end{align*} \msk
for $ k<n $. Since $ \diam \,(\Psi^n_{k,\,\vv}(B)) \leq C \si^{n-k} $, the convergence of above equation is exponentially fast.
\end{proof}
\msk

\begin{cor} \label{cor-Y-k Z-k and q estimation}
Let $ F \in \NN \cap \II_B(\oeps) $. For $ \pi_{\Bz} \circ R^kF = \bde_k $, let $ D\bde_k $ be $ (X_k(w) \ Y_k(w) \ Z_k(w)) $. Then
\begin{equation*}
\Big\| \;\! Y_k \circ \Psi^n_{k,\,\vv}(w) + Z_k \circ \Psi^n_{k,\,\vv}(w) \cdot \sum_{i=k}^{n-1} \Bq_i \circ (\pi_y \circ \Psi^n_{i,\,\vv}(w)) \:\! \Big\| \leq C\,\oeps^{2^k} \si^{n-k} 
\end{equation*}
for some $ C>0 $.
\end{cor}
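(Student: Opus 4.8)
The plan is to read this estimate directly off Lemma~\ref{lem-relation between Y-n and Z-n with Y-k and Z-k}, by left--multiplying --- inside that lemma --- the vector which converges to $\B0$ by the matrix $Z_k \circ \Psi^n_{k,\,\vv}(w)$. Concretely, I would first record the algebraic identity, valid for $k<n$ and any $w\in\Dom(R^nF)$, and using that $Z_k\circ\Psi^n_{k,\,\vv}(w)$ is invertible (the non--degeneracy of $\det Z$ on $\psi^1_c(B)\cup\psi^1_v(B)$, propagated to every level as in Proposition~\ref{prop-universal number b-Z}, which is in any case already presupposed by Lemma~\ref{lem-relation between Y-n and Z-n with Y-k and Z-k}):
\begin{multline*}
Y_k \circ \Psi^n_{k,\,\vv}(w) + Z_k \circ \Psi^n_{k,\,\vv}(w) \cdot \sum_{i=k}^{n-1} \Bq_i \circ (\pi_y \circ \Psi^n_{i,\,\vv}(w)) \\
= Z_k \circ \Psi^n_{k,\,\vv}(w) \cdot \Big[\, \big(Z_k \circ \Psi^n_{k,\,\vv}(w)\big)^{-1} Y_k \circ \Psi^n_{k,\,\vv}(w) + \sum_{i=k}^{n-1} \Bq_i \circ (\pi_y \circ \Psi^n_{i,\,\vv}(w)) \,\Big] .
\end{multline*}
The bracketed factor on the right is exactly the expression that Lemma~\ref{lem-relation between Y-n and Z-n with Y-k and Z-k} shows to have each coordinate of size at most $C\si^{n-k}$ with $C$ independent of $k$; hence its norm is at most $C\si^{n-k}$.

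It then remains to bound $\|Z_k\circ\Psi^n_{k,\,\vv}(w)\|$. For this I would invoke the standard a priori estimates of H\'enon renormalization: the perturbative part $\bde_k=\pi_{\Bz}\circ R^kF$ satisfies $\|\bde_k\|=O(\oeps^{2^k})$ on a complex domain of definite size --- the same super--exponential contraction of the perturbative part of $R^kF$ that underlies the bound $\|X_n\|=O(\oeps^{2^n})$ used in the proof of the preceding Proposition (see \cite{CLM, Nam4}). Since $\bde_k$ is analytic, the Cauchy estimates on a slightly smaller but still uniformly large domain give $\|D\bde_k\|=O(\oeps^{2^k})$, and $Z_k$ is an $m\times m$ block of $D\bde_k$, so $\|Z_k\|=O(\oeps^{2^k})$ uniformly on $\Dom(R^kF)$. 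Combining this with the identity above and submultiplicativity of the operator norm,
\begin{multline*}
\Big\| Y_k \circ \Psi^n_{k,\,\vv}(w) + Z_k \circ \Psi^n_{k,\,\vv}(w) \cdot \sum_{i=k}^{n-1} \Bq_i \circ (\pi_y \circ \Psi^n_{i,\,\vv}(w)) \Big\| \\
\le \|Z_k\circ\Psi^n_{k,\,\vv}(w)\| \cdot C\si^{n-k} \le C'\,\oeps^{2^k}\,\si^{n-k} ,
\end{multline*}
which is the asserted inequality after renaming the constant.

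Since essentially all the analytic content is already in Lemma~\ref{lem-relation between Y-n and Z-n with Y-k and Z-k}, with the remaining steps being formal matrix algebra and norm bounds, the hard part will not be any single computation but rather keeping the constants \emph{uniform in $k$}: that the implied constant in $\|Z_k\|=O(\oeps^{2^k})$ is independent of $k$ and $n$, and that the constant furnished by Lemma~\ref{lem-relation between Y-n and Z-n with Y-k and Z-k} is likewise $k$-independent. Both follow from the fact that every $R^kF$ is defined and analytic on a domain of comparable size on which its perturbative part contracts like $\oeps^{2^k}$, so that the Cauchy estimates apply with constants independent of $k$; granting this, the corollary follows at once.
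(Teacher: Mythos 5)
Your proposal is correct and follows the paper's own argument exactly: the paper likewise factors out $Z_k\circ\Psi^n_{k,\,\vv}(w)$, bounds the bracketed factor by $C_0\,\si^{n-k}$ via Lemma \ref{lem-relation between Y-n and Z-n with Y-k and Z-k}, and uses $\|Z_k\|\leq C_1\,\oeps^{2^k}$. Your additional remarks on the $k$-uniformity of the constants only make explicit what the paper leaves implicit.
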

\begin{proof}
\begin{align*}
& \quad \ \ Y_k \circ \Psi^n_{k,\,\vv}(w) + Z_k \circ \Psi^n_{k,\,\vv}(w) \cdot \sum_{i=k}^{n-1} \Bq_i \circ (\pi_y \circ \Psi^n_{i,\,\vv}(w)) \\[-0.4em]
& = \ Z_k \circ \Psi^n_{k,\,\vv}(w) \cdot \Big[\, \big( Z_k \circ \Psi^n_{k,\,\vv}(w) \big)^{-1} \cdot Y_k \circ \Psi^n_{k,\,\vv}(w) + \sum_{i=k}^{n-1} \Bq_i \circ (\pi_y \circ \Psi^n_{i,\,\vv}(w)) \,\Big] 
\end{align*}
By Lemma \ref{lem-relation between Y-n and Z-n with Y-k and Z-k}, we have  
\begin{equation*}
\Big\| \;\! \big( Z_k \circ \Psi^n_{k,\,\vv}(w) \big)^{-1} \cdot Y_k \circ \Psi^n_{k,\,\vv}(w) + \sum_{i=k}^{n-1} \Bq_i \circ (\pi_y \circ \Psi^n_{i,\,\vv}(w)) \:\! \Big\| \leq C_0\, \si^{n-k} 
\end{equation*}
and $ \| \;\! Z_k \| \leq C_1\,\oeps^{2^k} $.

\end{proof}

\msk

\subsection{Universal number $ b_1 $ and $ \di_y \eps $} \label{subsec-universal number b-1}
The universal number $ b_{\Bz} $ is defined as the asymptotic number of $ \det Z(w) $. Universality of Jacobian determinant implies that the average Jacobian $ b $ is the universal number of $ \Jac F $. Let us define another number $ b_1 $ as the ratio $ b_1 = b/b_{\Bz} $. Then it is shown that $ b_1 $ is the universal number for $ \di_y \eps $ below.
\\ \ssk
\nin 
Let $ M $ be a block matrix 
\begin{equation*}
\begin{aligned}
M =
\begin{pmatrix}
A & B \\
C & D
\end{pmatrix}
\end{aligned} \msk
\end{equation*}
where $ A $ and $ D $ are square matrices. Assume that the square matrix $ D $ is invertible. The equation
\begin{equation*}
\begin{aligned}
\begin{pmatrix}
A & B \\
C & D
\end{pmatrix}
\begin{pmatrix}
I & \B0 \\
D^{-1}C & I
\end{pmatrix} = 
\begin{pmatrix}
A- BD^{-1}C & B \\
\B0 & D
\end{pmatrix}
\end{aligned} \msk
\end{equation*}
Then $ \det M = \det (A - BD^{-1}C) \cdot \det (D) $. 
Applying this, we have that \msk
\begin{equation}
\begin{aligned}
\Jac F_n &= \det 
\begin{pmatrix}
\di_y \eps_n & E_n \\[0.2em]
Y_n & Z_n
\end{pmatrix} \\[0.3em]
&= \det \,(\,\di_y \eps_n - E_n \cdot (Z_n)^{-1} \cdot Y_n \,) \cdot \det Z_n \\[0.4em]
&= \big[\,\di_y \eps_n - E_n \cdot (Z_n)^{-1} \cdot Y_n \,\big] \cdot \det Z_n
\end{aligned} \msk
\end{equation}
where $ \big(\,\di_y \eps_n(w) \quad \di_{z_1} \eps_n(w) \quad \di_{z_2} \eps_n(w) \ \cdots \ \di_{z_m} \eps_n(w) \big) $ is $ E_n(w) $ for $ n \in \N $. \ssk Thus \,$ \det Z_n(w) $ is not zero for all $ w \in B $. Universality of Jacobian and Proposition \ref{prop-universal number b-Z} implies that \msk
\begin{equation*}
\begin{aligned}
\Jac F_n(w) &= b^{2^n}a(x)(1 + O(\rho^n)) \\[0.2em]
\det Z_n(w) &= b^{2^n}_{\Bz}(1 + O(\rho^n))
\end{aligned} \ssk
\end{equation*}
where $ b $ is the average Jacobian and $ b_{\Bz} $ is a positive number for some $ 0 < \rho <1 $. Let $ b_1 = b/b_{\Bz} $. Then by Lemma \ref{lem-relation between Y-n and Z-n with Y-k and Z-k}, we obtain that \msk
\begin{equation*}
\begin{aligned}
\Jac F_k(w) &= \ b^{2^k}a(x)(1 + O(\rho^k))
\\[0.2em]
&= \ \big[\,\di_y \eps_k(w) - E_k(w) \cdot (Z_k(w))^{-1} \cdot Y_k(w) \,\big] \cdot b^{2^k}_{\Bz}(1 + O(\rho^k)) 
\end{aligned} \msk
\end{equation*}
Thus 
\begin{equation} \label{eq-universal expression with the number b-1}
\di_y \eps_k(w) - E_k(w) \cdot (Z_k(w))^{-1} \cdot Y_k(w) \ = \ b_1^{2^k}a(x)\,(1 + O(\rho^k))
\end{equation}

\msk
\begin{lem} \label{lem-di-y eps and q asymptotic from n to k}
Let $ F \in \NN \cap \II_B(\oeps) $ and $ F_k $ be $ R^kF $ for $ k \in \N $. Then
$$ \Big|\;\di_y\eps_k \circ (\Psi^n_{k,\,\vv}(w)) + E_k \circ (\Psi^n_{k,\,\vv}(w)) \cdot \sum_{i=k}^{n-1} \Bq_i \circ (\pi_y \circ (\Psi^n_{i,\,\vv}(w)) \; \Big| \ \leq \ C_0\,b_1^{2^k} + C_1\,\oeps^{2^k}\si^{n-k} $$
where $ w \in B(R^nF) $ for some positive $ C_0 $ and $ C_1 $. Moreover, if $ n $ satisfies that $ \si^{n-k} \asymp b_1^{2^k} $, then
$$ \di_y\eps_k \circ (\Psi^n_{k,\,\vv}(w)) + E_k \circ (\Psi^n_{k,\,\vv}(w)) \cdot \sum_{i=k}^{n-1} \Bq_i \circ (\pi_y \circ (\Psi^n_{i,\,\vv}(w)) \ \asymp \ b_1^{2^k}
$$
\end{lem}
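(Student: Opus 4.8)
The plan is to evaluate the universal identity \eqref{eq-universal expression with the number b-1} along the scaling tower, that is, at the point $\Psi^n_{k,\,\vv}(w)$ rather than at $w$, and then to replace the matrix term $-\big(Z_k\big)^{-1}Y_k$ appearing there by the telescoping sum $\sum_{i=k}^{n-1}\Bq_i\circ(\pi_y\circ\Psi^n_{i,\,\vv}(w))$, with the substitution error controlled by Lemma \ref{lem-relation between Y-n and Z-n with Y-k and Z-k}. Since $\Psi^n_{k,\,\vv}(w)\in B(R^kF)$ whenever $w\in B(R^nF)$, identity \eqref{eq-universal expression with the number b-1} applied at $\Psi^n_{k,\,\vv}(w)$ gives
\begin{equation*}
\di_y\eps_k\circ\Psi^n_{k,\,\vv}(w)-E_k\circ\Psi^n_{k,\,\vv}(w)\cdot\big(Z_k\circ\Psi^n_{k,\,\vv}(w)\big)^{-1}\cdot Y_k\circ\Psi^n_{k,\,\vv}(w)\;=\;b_1^{2^k}\,a\big(\pi_x\circ\Psi^n_{k,\,\vv}(w)\big)\,(1+O(\rho^k)).
\end{equation*}

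Next I would write the quantity to be estimated as the left-hand side above plus the correction term
\begin{equation*}
E_k\circ\Psi^n_{k,\,\vv}(w)\cdot\Big[\,\big(Z_k\circ\Psi^n_{k,\,\vv}(w)\big)^{-1}\cdot Y_k\circ\Psi^n_{k,\,\vv}(w)+\sum_{i=k}^{n-1}\Bq_i\circ(\pi_y\circ\Psi^n_{i,\,\vv}(w))\,\Big].
\end{equation*}
By Lemma \ref{lem-relation between Y-n and Z-n with Y-k and Z-k} the bracket has all coordinates bounded by $C\si^{n-k}$ with $C$ independent of $k$, and the a priori super-exponential contraction of the perturbative part under renormalization gives $\|E_k\|=O(\oeps^{2^k})$; hence the correction is $O(\oeps^{2^k}\si^{n-k})$. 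Meanwhile $a$ is a fixed positive continuous function on the compact domain, so $|a|\le a_{\max}$ and $|1+O(\rho^k)|\le 2$, whence the main term is bounded by $C_0\,b_1^{2^k}$. Adding the two bounds yields the first assertion, $\big|\di_y\eps_k\circ\Psi^n_{k,\,\vv}(w)+E_k\circ\Psi^n_{k,\,\vv}(w)\cdot\sum_{i=k}^{n-1}\Bq_i\circ(\pi_y\circ\Psi^n_{i,\,\vv}(w))\big|\le C_0\,b_1^{2^k}+C_1\,\oeps^{2^k}\si^{n-k}$.

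For the $\asymp$ statement, when $n$ is chosen so that $\si^{n-k}\asymp b_1^{2^k}$ the correction term is $O(\oeps^{2^k}b_1^{2^k})=o(b_1^{2^k})$ because $\oeps^{2^k}\to 0$, while the main term of the displayed identity has absolute value $b_1^{2^k}\,|a(\pi_x\circ\Psi^n_{k,\,\vv}(w))|\,(1+O(\rho^k))\asymp b_1^{2^k}$, since $|a|$ is bounded below as well as above by positive constants on the domain (the factor $1+O(\rho^k)$ lies in a fixed interval around $1$ for $k$ large, the finitely many small $k$ being absorbed into the implied constants). Applying the triangle inequality in both directions then gives $\asymp b_1^{2^k}$.

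The only step that is more than bookkeeping is the a priori bound $\|E_k\|=O(\oeps^{2^k})$ on the $\Bz$-partial derivatives of the renormalized error $\eps_k$; this is the standard super-exponential smallness of the perturbative part, which comes from the definition $RF=\La\circ H\circ F^2\circ H^{-1}\circ\La^{-1}$ together with the convergence $R^nF\to F_*$ and the universality of $\Jac F_k$ (cf.\ \cite{Nam4} and Proposition \ref{prop-universal number b-Z}). Once this is available the lemma follows from the three displayed lines above, using \eqref{eq-universal expression with the number b-1} and Lemma \ref{lem-relation between Y-n and Z-n with Y-k and Z-k}.
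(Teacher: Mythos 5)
Your proof is correct and follows essentially the same route as the paper: evaluate the universal identity \eqref{eq-universal expression with the number b-1} at $\Psi^n_{k,\,\vv}(w)$, then trade the term $-E_k\cdot(Z_k)^{-1}Y_k$ for $+E_k\cdot\sum\Bq_i$ using Lemma \ref{lem-relation between Y-n and Z-n with Y-k and Z-k} to bound the swap error by $O(\oeps^{2^k}\si^{n-k})$; the paper merely phrases the same substitution through $(Z_n(w))^{-1}Y_n(w)$. Your treatment of the $\asymp$ claim is, if anything, slightly more explicit than the paper's about the lower bound coming from $a$ being bounded away from zero.
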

\begin{proof}
The equation \eqref{eq-universal expression with the number b-1} implies that \msk
\begin{equation} \label{eq-b1-2-k with partial derivatives of de-k}
\begin{aligned}
&\quad \ 
\ b_1^{2^k}a\,(\pi_x \circ (\Psi^n_{k,\,\vv}(w))\,(1 + O(\rho^k)) \\[0.7em]
& = \ \di_y\eps_k \circ (\Psi^n_{k,\,\vv}(w)) - E_k \circ (\Psi^n_{k,\,\vv}(w)) \cdot (Z_k \circ (\Psi^n_{k,\,\vv}(w))^{-1} \cdot Y_k \circ (\Psi^n_{k,\,\vv}(w)) \\
& = \ \di_y\eps_k \circ (\Psi^n_{k,\,\vv}(w)) - E_k \circ (\Psi^n_{k,\,\vv}(w)) \cdot \Big[\, -\sum_{i=k}^{n-1} \Bq_i \circ (\pi_y \circ (\Psi^n_{i,\,\vv}(w)) + (Z_n(w))^{-1} \cdot Y_n(w) \,\Big] \\
& = \ \di_y\eps_k \circ (\Psi^n_{k,\,\vv}(w)) + E_k \circ (\Psi^n_{k,\,\vv}(w)) \cdot \sum_{i=k}^{n-1} \Bq_i \circ (\pi_y \circ (\Psi^n_{i,\,\vv}(w)) \\
& \qquad - E_k \circ (\Psi^n_{k,\,\vv}(w)) \cdot (Z_n(w))^{-1} \cdot Y_n(w) 
\end{aligned} \msk
\end{equation}
Lemma \ref{lem-relation between Y-n and Z-n with Y-k and Z-k} implies that
\msk
\begin{equation*}
\begin{aligned}
\| \:\! E_k \circ (\Psi^n_{k,\,\vv}(w)) \cdot (Z_n(w))^{-1} \cdot Y_n(w) \| & \leq \ \| \:\! E_k \circ (\Psi^n_{k,\,\vv}(w)) \| \cdot \| \:\!(Z_n(w))^{-1} \cdot Y_n(w) \| \\[0.2em]
& \leq \ C_1\, \oeps^{2^k} \si^{n-k} 
\end{aligned} 
\end{equation*}
for some $ C>0 $ independent of $ k $. Hence, 
$$ \Big| \;\di_y\eps_k \circ (\Psi^n_{k,\,\vv}(w)) + E_k \circ (\Psi^n_{k,\,\vv}(w)) \cdot \sum_{i=k}^{n-1} \Bq_i \circ (\pi_y \circ (\Psi^n_{i,\,\vv}(w)) \;\Big| \ \leq \ C_0\,b_1^{2^k} + C_1\, \oeps^{2^k}\si^{n-k}
$$
where $ w \in B(R^nF) $ for some positive $ C_0 $ and $ C_1 $. If $ \si^{n-k} \asymp b_1^{2^k} $ then $ \oeps^{2^k}\si^{n-k} \leq C\,b_1^{2^k} $ for some $ C>0 $.
\end{proof}

\bsk
\section{Recursive formula of $ \Psi^n_k $} \label{sec-recursive formula of Psi-n-k}

\begin{prop} \label{recursive formula of d, u, and t}
Let $ F \in \II_B(\bar \eps) $. $ F_k $ and $ F_n $ denote $ k^{th} $ and $ n^{th} $ renormalized map of $ F $ respectively. The derivative of the non-linear conjugation $ \Psi^n_k $ at the tip, $ \tau_{F_k} $ between $ F_k^{2^{n-k}} $ and $ F_n $ 
is called $ D^n_k $, which is as follows \msk
\begin{equation*}
\begin{aligned}
D^n_k = 
\begin{pmatrix}
\;\alpha_{n,\,k} & \si_{n,\,k}\, t_{n,\,k} & \si_{n,\,k}\, \Bu_{n,\,k} \\[0.2em]
& \si_{n,\,k} & \\[0.2em]
& \si_{n,\,k}\, \Bd_{n,\,k} & \si_{n,\,k} \cdot \Id_{m \times m} \;
\end{pmatrix}
\end{aligned} \msk
\end{equation*}
where $ \Id_{m \times m} $ is the $ m \times m $ identity matrix, $ \si_{n,\,k} $ and $ \alpha_{n,\,k} $ are \ssk linear scaling factors such that $ \si_{n,\,k} = (-\si)^{n-k} (1 + O(\rho^k)) $ and $ \alpha_{n,\,k} = \si^{2(n-k)} (1 + O(\rho^k)) $.
Then 
\begin{equation*}
\begin{aligned}
\Bd_{n,\,k} &= \sum_{i=k}^{n-1} \Bd_{i+1,\,i} \, , \quad \Bu_{n,\,k} = \sum_{i=k}^{n-1} \si^{i-k}\,\Bu_{i+1,\,i}\,(1 + O(\rho^k))\\
  t_{n,\,k} &= \sum_{i=k}^{n-1} \si^{i-k}\, \big[\, t_{i+1,\,i} + \Bu_{i+1,\,i}\cdot \Bd_{n,\,i+1} \big](1 + O(\rho^k)) \\
  t_{n,\,k}- \Bu_{n,\,k}\cdot \Bd_{n,\,k} &= \sum_{i=k}^{n-1} \si^{i-k}\, \big[\, t_{i+1,\,i} - \Bu_{i+1,\,i}\cdot \Bd_{i+1,\,k} \big](1 + O(\rho^k)) 
\end{aligned}
\end{equation*}
where\; $ \si^{i-k} (1 + O(\rho^k)) = {\displaystyle\prod_{j=k}^{i-1}} \dfrac{\alpha_{j+1,\,j}}{\si_{j+1,\,j}} $. Moreover, $ \Bd_{n,\,k} $, $ \Bu_{n,\,k} $ and $ t_{n,\,k} $ are convergent as $ n \ra \infty $ super exponentially fast.
\end{prop}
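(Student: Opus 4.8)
The plan is to exploit the one-step composition structure of the conjugacies and reduce the statement to a product of elementary block triangular matrices. Since $ \Psi^n_{k,\,\vv} = \psi^{k+1}_v \circ \Psi^n_{k+1,\,\vv} $ and each $ \Psi^j_{i,\,\vv} $ carries the tip $ \tau_j $ to the tip $ \tau_i $, the chain rule gives the exact matrix identity $ D^n_k = D^{k+1}_k \cdot D^n_{k+1} $; iterating, $ D^n_k = D^{k+1}_k\, D^{k+2}_{k+1}\cdots D^n_{n-1} $ is the product of the $ n-k $ elementary factors $ D^{i+1}_i = D\psi^{i+1}_v(\tau_{i+1}) $, each having the block form displayed in the statement, with the one-step quantities $ \alpha_{i+1,\,i} $, $ \si_{i+1,\,i} $, $ t_{i+1,\,i} $, $ \Bu_{i+1,\,i} $, $ \Bd_{i+1,\,i} $ replacing $ \alpha_{n,\,k} $, $ \si_{n,\,k} $, etc.; this block form of the linear part of $ \Psi^{i+1}_i $ is the one recorded in the preliminaries.

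Then I would carry out the block multiplication $ D^{k+1}_k \cdot D^n_{k+1} $. The $ (1,1) $ and $ (2,2) $ blocks give $ \alpha_{n,\,k} = \alpha_{k+1,\,k}\,\alpha_{n,\,k+1} $ and $ \si_{n,\,k} = \si_{k+1,\,k}\,\si_{n,\,k+1} $, which telescope to $ \alpha_{n,\,k} = \prod_{j=k}^{n-1}\alpha_{j+1,\,j} $ and $ \si_{n,\,k} = \prod_{j=k}^{n-1}\si_{j+1,\,j} $, hence to the asserted asymptotics; in particular $ \alpha_{n,\,k}/\si_{n,\,k} = \prod_{j=k}^{n-1}(\alpha_{j+1,\,j}/\si_{j+1,\,j}) = \si^{n-k}(1+O(\rho^k)) $, using that a single ratio equals $ \si\,(1+O(\rho^j)) $ up to sign and that $ \prod_{j\ge k}(1+O(\rho^j)) = 1+O(\rho^k) $. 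Reading off the off-diagonal blocks and cancelling $ \si_{n,\,k} = \si_{k+1,\,k}\si_{n,\,k+1} $ yields the one-step recursions
\begin{equation*}
\begin{aligned}
\Bd_{n,\,k} &= \Bd_{k+1,\,k} + \Bd_{n,\,k+1}, \\
\Bu_{n,\,k} &= \frac{\alpha_{k+1,\,k}}{\si_{k+1,\,k}}\,\Bu_{n,\,k+1} + \Bu_{k+1,\,k}, \\
t_{n,\,k} &= \frac{\alpha_{k+1,\,k}}{\si_{k+1,\,k}}\,t_{n,\,k+1} + t_{k+1,\,k} + \Bu_{k+1,\,k}\cdot \Bd_{n,\,k+1}.
\end{aligned}
\end{equation*}

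Next I would solve these by induction on $ n-k $ (the base case $ n=k+1 $ being immediate). The first telescopes at once to $ \Bd_{n,\,k} = \sum_{i=k}^{n-1}\Bd_{i+1,\,i} $. Unrolling the linear recursion for $ \Bu_{n,\,k} $ gives $ \Bu_{n,\,k} = \sum_{i=k}^{n-1}\bigl(\prod_{j=k}^{i-1}(\alpha_{j+1,\,j}/\si_{j+1,\,j})\bigr)\Bu_{i+1,\,i} = \sum_{i=k}^{n-1}\si^{i-k}\Bu_{i+1,\,i}(1+O(\rho^k)) $, and the recursion for $ t_{n,\,k} $ is unrolled the same way, carrying the inhomogeneous term $ \Bu_{k+1,\,k}\cdot\Bd_{n,\,k+1} $, to produce $ t_{n,\,k} = \sum_{i=k}^{n-1}\si^{i-k}\bigl[t_{i+1,\,i}+\Bu_{i+1,\,i}\cdot\Bd_{n,\,i+1}\bigr](1+O(\rho^k)) $. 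For the remaining identity I would insert the telescoping split $ \Bd_{n,\,i+1} = \Bd_{n,\,k} - \Bd_{i+1,\,k} $ and note that $ \sum_{i=k}^{n-1}\si^{i-k}(1+O(\rho^k))\,\Bu_{i+1,\,i} = \Bu_{n,\,k} $, so that the part of the sum proportional to $ \Bd_{n,\,k} $ collapses to $ \Bu_{n,\,k}\cdot\Bd_{n,\,k} $ and there remains $ t_{n,\,k}-\Bu_{n,\,k}\cdot\Bd_{n,\,k} = \sum_{i=k}^{n-1}\si^{i-k}\bigl[t_{i+1,\,i}-\Bu_{i+1,\,i}\cdot\Bd_{i+1,\,k}\bigr](1+O(\rho^k)) $.

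Finally, for the convergence as $ n\to\infty $ I would use the dual composition $ D^{n+1}_k = D^n_k\cdot D^{n+1}_n $, which shows that replacing $ n $ by $ n+1 $ changes $ \Bd_{n,\,k} $ by $ \Bd_{n+1,\,n} $, changes $ \Bu_{n,\,k} $ by $ (\alpha_{n,\,k}/\si_{n,\,k})\,\Bu_{n+1,\,n} $, and changes $ t_{n,\,k} $ by $ (\alpha_{n,\,k}/\si_{n,\,k})\,t_{n+1,\,n} + \Bu_{n,\,k}\cdot\Bd_{n+1,\,n} $. Since the elementary quantities $ \Bd_{n+1,\,n} $, $ \Bu_{n+1,\,n} $, $ t_{n+1,\,n} $ are super-exponentially small, of order $ O(\oeps^{2^n}) $ (part of the construction of H\'enon renormalization, cf.\ \cite{CLM, Nam4}), while $ |\alpha_{n,\,k}/\si_{n,\,k}| $ grows only like $ \si^{n-k} $ and $ \Bu_{n,\,k} $ stays bounded, each increment is $ O(\si^{n-k}\oeps^{2^n}) $, which is super-exponentially small in $ n $; hence $ \Bd_{n,\,k} $, $ \Bu_{n,\,k} $, $ t_{n,\,k} $ all converge super-exponentially fast. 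I expect the main obstacle to be the bookkeeping of the error terms: one must verify that the roughly $ n-k $ accumulated correction factors $ (1+O(\rho^j)) $ combine to a single $ 1+O(\rho^k) $ uniformly in $ n $, together with the routine check that the tip-centred conjugacies genuinely fix the origin so that the chain rule produces a clean matrix product.
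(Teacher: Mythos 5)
Your proposal is correct and follows essentially the same route as the paper: the identity $D^n_k = D^{k+1}_k\cdot D^n_{k+1}$ from the tip-preserving composition, block multiplication to extract the one-step recursions for $\Bd$, $\Bu$, $t$, unrolling them, and the telescoping split $\Bd_{n,\,i+1}=\Bd_{n,\,k}-\Bd_{i+1,\,k}$ for the last identity. The only cosmetic difference is in the convergence step, where you bound the increments via $D^{n+1}_k=D^n_k\cdot D^{n+1}_n$ while the paper bounds the summands directly by identifying $\Bd_{i+1,\,i}$, $\Bu_{i+1,\,i}$ and $t_{i+1,\,i}-\Bu_{i+1,\,i}\cdot\Bd_{i+1,\,i}$ with derivatives of $\bde_i$ and $\eps_i$ at the tip, which are $O(\oeps^{2^i})$; both rest on the same super-exponential smallness of the one-step data.
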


\begin{proof}
$ D^n_k = D^m_k \cdot D^n_m $ for any $ m $ between $ k $ and $ n $ because the image of the tip under $ \Psi^n_k(\tau_{F_n}) $ is the tip of $ k^{th} $ level. By the direct calculation, \msk
\begin{equation*}
\begin{aligned}
& \quad D^m_k \cdot D^n_m \\
& = 
\begin{pmatrix}
\alpha_{n,\,k} & \boxed{ \alpha_{m,\,k}\,\si_{n,\,m}\, t_{n,\,m} + \si_{n,\,k}\,t_{m,\,k} + \si_{n,\,k}\,\Bu_{m,\,k}\cdot \Bd_{n,\,m} } & \boxed{ \alpha_{m,\,k}\si_{n,\,m}\, \Bu_{n,\,m} + \si_{n,\,k}\,\Bu_{m,\,k} } \ \ \\[0.2em]
& \si_{n,\,k} & \\[0.2em]
& \boxed{ \si_{n,\,k}\, \Bd_{m,\,k} + \si_{n,\,k}\,\Bd_{n,\,m} } & \si_{n,\,k}\cdot \Id_{m \times m}
\end{pmatrix} .
\end{aligned} \msk
\end{equation*}
 Then
\begin{equation*}
\begin{aligned}
\si_{n,\,k}\,t_{n,\,k} &= \alpha_{m,\,k}\,\si_{n,\,m}\, t_{n,\,m} + \si_{n,\,k}\,t_{m,\,k} + \si_{n,\,k}\,\Bu_{m,\,k}\cdot \Bd_{n,\,m} \\
\si_{n,\,k}\,\Bu_{n,\,k} &= \alpha_{m,\,k}\,\si_{n,\,m}\, \Bu_{n,\,m} + \si_{n,\,k}\,\Bu_{m,\,k} \\
\si_{n,\,k}\,\Bd_{n,\,k} &= \si_{n,\,k}\, \Bd_{m,\,k} + \si_{n,\,k}\,\Bd_{n,\,m}
\end{aligned} \msk
\end{equation*}
for any $ m $ between $ k $ and $ n $. Recall that $ \si_{n,\,k} = \si_{n,\,m} \cdot \si_{m,\,k} $ and $ \alpha_{n,\,k} = \alpha_{n,\,m} \cdot \alpha_{m,\,k} $. Let $ m $ be $ k+1 $. Then
\begin{equation} \label{d-n,k recursive form}
\begin{aligned}
\Bd_{n,\,k} &= \Bd_{n,\,k+1} + \Bd_{k+1,\,k} \\
&= \Bd_{n,\,k+2} + \Bd_{k+2,\,k+1} + \Bd_{k+1,\,k} \\
& \hspace{1in} \vdots \\
&= \Bd_{n,\,n-1} + \cdots + \Bd_{k+2,\,k+1} + \Bd_{k+1,\,k} \\
&=  \sum_{i=k}^{n-1} \Bd_{i+1,\,i} .
\end{aligned} \msk
\end{equation}
Moreover, the absolute value each term is super exponentially small. More precisely, each term is bounded by $ \oeps^{2^{\!\:i}} $ for each $ i $, that is, $ \|\;\!\Bd_{i+1,\,i} \| \asymp \|\,\Bq_i (\pi_y(\tau_{i+1}))\| \leq \| D\bde_i \| = O(\bar \eps^{2^{\!\:i}}) $. Then $ \Bd_{n,\,k}^j $ converges to the number, say $ \Bd_{*,\,k}^j $ super exponentially fast for each $ 1 \leq j \leq m $.
\ssk \\
Let us see the recursive formula of $ \Bu_{n,\,k} $
\begin{equation} \label{u-n,k recursive form}
\begin{aligned}
\Bu_{n,\,k} &= \frac{\alpha_{k+1,\,k}}{\si_{k+1,\,k}}\, \Bu_{n,\,k+1} + \Bu_{k+1,\,k} \\
&= \frac{\alpha_{k+1,\,k}}{\si_{k+1,\,k}} \left[\,\frac{\alpha_{k+2,\,k+1}}{\si_{k+2,\,k+1}}\, \Bu_{n,\,k+2} + \Bu_{k+2,\,k+1} \,\right] + \Bu_{k+1,\,k} \\
& \hspace{1in} \vdots \\
&= \sum_{i=k+1}^{n-1}\prod_{j=k}^{i-1} \frac{\alpha_{j+1,\,j}}{\si_{j+1,\,j}}\ \Bu_{i+1,\,i} + \Bu_{k+1,\,k}\\
&= \ \sum_{i=k}^{n-1} \si^{i-k} \Bu_{i+1,\,i}\, (1 + O(\rho^k)) .
\end{aligned} \ssk
\end{equation}
Moreover, $ \Bu_{i+1,\,i}^j \asymp \di_{\Bz_j} \eps_i(\tau_{F_{i+1}}) $ for each $ 1 \leq j \leq m $. Then $ \Bu^j_{n,\,k} $ converges to the number, say $ \Bu^j_{*,\,k} $ for each $ j =1,2,\ldots,m $ super exponentially fast by the similar reason for $ \Bd_{n,\,k}^j $. 
Let us see the recursive formula of $ t_{n,\,k} $ \msk
\begin{equation} \label{t-n,k recursive form}
\begin{aligned}
t_{n,\,k} &= \frac{\alpha_{k+1,\,k}}{\si_{k+1,\,k}}\ t_{n,\,k+1} + t_{k+1,\,k} + \Bu_{k+1,\,k}\cdot \Bd_{n,\,k+1} \\
&= \frac{\alpha_{k+1,\,k}}{\si_{k+1,\,k}} \left[\,\frac{\alpha_{k+2,\,k+1}}{\si_{k+2,\,k+1}}\ t_{n,\,k+2} + t_{k+2,\,k+1} + \Bu_{k+2,\,k+1}\cdot \Bd_{n,\,k+2} \,\right] + t_{k+1,\,k} + \Bu_{k+1,\,k}\cdot \Bd_{n,\,k+1} \\
& \hspace{2in} \vdots \\
&= \sum_{i=k+1}^{n-1}\prod_{j=k}^{i-1} \frac{\alpha_{j+1,\,j}}{\si_{j+1,\,j}}\ t_{i+1,\,i} + t_{k+1,\,k} +  \sum_{i=k+1}^{n-1}\prod_{j=k}^{i-1} \frac{\alpha_{j+1,\,j}}{\si_{j+1,\,j}}\ \Bu_{i+1,\,i}\cdot \Bd_{n,\,i+1} + \Bu_{k+1,\,k}\cdot \Bd_{n,\,k+1} \\
&= \ \sum_{i=k}^{n-1} \si^{i-k} \big[ \,t_{i+1,\,i} + \Bu_{i+1,\,i}\cdot \Bd_{n,\,i+1} \,\big] (1 + O(\rho^k)) .
\end{aligned} \msk
\end{equation}
By the equations \eqref{d-n,k recursive form}, \eqref{u-n,k recursive form} and \eqref{t-n,k recursive form}, we obtain the recursive formula of $ t_{n,\,k}- \Bu_{n,\,k}\cdot \Bd_{n,\,k} $ as follows
\msk
\begin{align*}
& \qquad  t_{n,\,k}- \Bu_{n,\,k}\cdot \Bd_{n,\,k} \\
&= \ \sum_{i=k+1}^{n-1}\prod_{j=k}^{i-1} \frac{\alpha_{j+1,\,j}}{\si_{j+1,\,j}} \big[ \,t_{i+1,\,i} + \Bu_{i+1,\,i}\cdot \Bd_{n,\,i+1} \,\big] + t_{k+1,\,k} + \Bu_{k+1,\,k}\cdot \Bd_{n,\,k+1} \\
& \qquad - \left[\, \sum_{i=k+1}^{n-1}\prod_{j=k}^{i-1} \frac{\alpha_{j+1,\,j}}{\si_{j+1,\,j}}\ \Bu_{i+1,\,i} + \Bu_{k+1,\,k} \right] \cdot \Bd_{n,\,k} \\
&= \ \sum_{i=k+1}^{n-1}\prod_{j=k}^{i-1} \frac{\alpha_{j+1,\,j}}{\si_{j+1,\,j}} \big[\,t_{i+1,\,i} + \Bu_{i+1,\,i}\cdot \Bd_{n,\,i+1} - \Bu_{i+1,\,i} \cdot \Bd_{n,\,k} \,\big] \\
&\qquad + t_{k+1,\,k} + \Bu_{k+1,\,k}\cdot \Bd_{n,\,k+1} - \Bu_{k+1,\,k}\cdot \Bd_{n,\,k} \\[0.3em]
&= \ \sum_{i=k+1}^{n-1}\prod_{j=k}^{i-1} \frac{\alpha_{j+1,\,j}}{\si_{j+1,\,j}} \big[\,t_{i+1,\,i} - \Bu_{i+1,\,i} \cdot \Bd_{i+1,\,k} \,\big] + t_{k+1,\,k} - \Bu_{k+1,\,k}\cdot \Bd_{k+1,\,k} \\
&= \ \ \sum_{i=k}^{n-1} \si^{i-k} \big[ \,t_{i+1,\,i} - \Bu_{i+1,\,i}\cdot \Bd_{i+1,\,k} \,\big] (1 + O(\rho^k)) .
\end{align*} \msk

\nin Recall the expression of the derivative of the coordinate change map at the tip on each level  \ssk
\begin{align*}
(D^{k+1}_k)^{-1} = 
\begin{pmatrix}
(\alpha_k)^{-1} & & \\
& (\si_k)^{-1} & \\
& & (\si_k)^{-1}\cdot \Id_{m \times m}
\end{pmatrix}  \cdot
\begin{pmatrix}
1 & - t_k + \Bu_k \cdot \Bd_k & - \Bu_k \\
& 1 & \\
& -\Bd_k & \Id_{m \times m}
\end{pmatrix} .
\end{align*} \bsk
Since $ H_k(w) = (f_k(x) -\eps_k(w),\, y,\, \Bz - \bde_k(y,f_k^{-1}(y),\B0)) $, \ssk we see that $ \di_y \eps_k(\tau_{F_k}) \asymp - t_k + \Bu_k \cdot \Bd_k $ for every $ k \in \N $. Moreover, the fact that $ t_{i+1,\,i} - \Bu_{i+1,\,i}\cdot \Bd_{i+1,\,i} \asymp \di_y \eps_i(\tau_{F_{i+1}}) $ and $ |\,\Bu_{i+1,\,i}\cdot \Bd_{n,\,i}| $ is super exponentially small for each $ i<n $ implies that $ t_{n,\,k} $ converges to a number, say $ t_{*,\,k} $ super exponentially fast.

\end{proof}

\nin Recall the expression of the map $ \Psi^n_k $ from $ B(R^nF) $ to $ B^{n-k}_{{\bf v}}(R^kF) $ \msk
\begin{align*}
\Psi^n_k(w) = 
\begin{pmatrix}
1 & t_{n,\,k} & \Bu_{n,\,k} \\[0.2em]
& 1 & \\
& \Bd_{n,\,k} & 1
\end{pmatrix}
\begin{pmatrix}
\alpha_{n,\,k} & & \\
& \si_{n,\,k} & \\
& & \si_{n,\,k} \cdot \Id_{m \times m}
\end{pmatrix}
\begin{pmatrix}
x + S^n_k(w) \\
y \\[0.2em]
\Bz + {\bf R}_{n,\,k}(y)
\end{pmatrix} 
\end{align*}
where $ {\bf v} = v^{n-k} \in W^{n-k} $.\msk

\begin{prop} \label{exponential smallness of R-n-k}
Let $ F \in \II_B(\bar \eps) $ and $ \Psi^n_k $ be the map from $ B(R^nF) $ to $ B^(R^kF) $ as the conjugation between $ (R^kF)^{2^{n-k}} $ and $ R^nF $. Let $ {\bf R}_{n,\,k}(y) $ be the non linear part of $ \pi_z \circ \Psi^n_k $ depending on the second variable $ y $. Then both $ {\bf R}_{n,\,k}(y) $ and $ ({\bf R}_{n,\,k})'(y) $ converges to zero exponentially fast as $ n \ra \infty $. In particular,
\begin{equation*}
\begin{aligned}
\| \:\! {\bf R}_{n,\,k} \| \leq C\,\si^{n-k} 
, \qquad \| \:\! ({\bf R}_{n,\,k})' \| \leq C\,\si^{n-k}
\end{aligned}
\end{equation*}
for some $ C>0 $.
\end{prop}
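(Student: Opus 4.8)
The plan is to read $ {\bf R}_{n,\,k} $ off the triangular factorization of $ \Psi^n_k $ recorded in Section~\ref{sec-recursive formula of Psi-n-k}, to identify it with a rescaled second--order Taylor remainder of the last coordinate of $ \Psi^n_k $ in the variable $ y $, and then to estimate that remainder using the super--exponentially small size of $ \bde $. The structural input is that the $ y $-- and $ \Bz $--components of each scaling map $ \psi^{j+1}_v=H_j^{-1}\circ\La_j^{-1} $ are elementary: because $ H_j(x,y,\Bz)=(f_j(x)-\eps_j(w),\ y,\ \Bz-\bde_j(y,f_j^{-1}(y),\B0)) $ fixes $ y $, is affine in $ \Bz $, and carries no $ x $ in those two outputs, one has $ \pi_y\circ\psi^{j+1}_v(x,y,\Bz)=\si_j y $ and $ \pi_{\Bz}\circ\psi^{j+1}_v(x,y,\Bz)=\si_j\Bz+\bde_j(\si_j y,\,f_j^{-1}(\si_j y),\,\B0) $. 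Composing along $ \Psi^n_{k,\,\vv}=\psi^{k+1}_v\circ\cdots\circ\psi^n_v $ and using $ \si_{n,\,k}=\si_{i,\,k}\si_{n,\,i} $, an induction on $ n-k $ shows that $ \pi_{\Bz}\circ\Psi^n_{k,\,\vv} $ depends only on $ (y,\Bz) $, is affine in $ \Bz $, and has the form
\begin{equation*}
\pi_{\Bz}\circ\Psi^n_{k,\,\vv}(x,y,\Bz)\ =\ \si_{n,\,k}\,\Bz\ +\ \Gamma_{n,\,k}(y),\qquad \Gamma_{n,\,k}(y)\ =\ \sum_{i=k}^{n-1}\si_{i,\,k}\;\bde_i\big(\si_{n,\,i}\,y,\ f_i^{-1}(\si_{n,\,i}\,y),\ \B0\big).
\end{equation*}
Passing to the tip--centred $ \Psi^n_k(w)=\Psi^n_{k,\,\vv}(w+\tau_{F_n})-\tau_{F_k} $ and comparing with the factorization, the entire nonlinear part of $ \pi_{\Bz}\circ\Psi^n_k $ is $ \si_{n,\,k}\,{\bf R}_{n,\,k}(y)=\Gamma_{n,\,k}(y+\pi_y(\tau_{F_n})) $ minus its degree--$ 1 $ Taylor polynomial at $ y=0 $; in particular $ {\bf R}_{n,\,k} $ carries neither $ x $-- nor $ \Bz $--dependence, as the factorization asserts.

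Next I would check that $ \Bd_{n,\,k} $ really captures the first--order term: differentiating the display gives $ \Gamma_{n,\,k}'(y)=\si_{n,\,k}\sum_{i=k}^{n-1}\Bq_i(\si_{n,\,i}y) $ (chain rule, $ \si_{i,\,k}\si_{n,\,i}=\si_{n,\,k} $, and $ \Bq_i=\tfrac{d}{du}\bde_i(u,f_i^{-1}(u),\B0) $), so $ \si_{n,\,k}\Bd_{n,\,k}=\Gamma_{n,\,k}'(\pi_y(\tau_{F_n})) $, which is exactly the recursion $ \Bd_{n,\,k}=\sum_i\Bd_{i+1,\,i} $ of Proposition~\ref{recursive formula of d, u, and t} (recall $ \Bd_{i+1,\,i}\asymp\Bq_i(\pi_y(\tau_{F_{i+1}})) $). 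Consequently $ {\bf R}_{n,\,k} $ and $ ({\bf R}_{n,\,k})' $ vanish at the tip, and Taylor's theorem with Lagrange remainder gives, on $ \pi_y(B(R^nF)) $, that both $ \|{\bf R}_{n,\,k}\| $ and $ \|({\bf R}_{n,\,k})'\| $ are bounded by $ C\,|\si_{n,\,k}|^{-1}\,\|\Gamma_{n,\,k}''\| $. It then remains to bound $ \Gamma_{n,\,k}'' $: differentiating twice and using $ \si_{i,\,k}\si_{n,\,i}^{\,2}=\si_{n,\,k}\si_{n,\,i} $ yields $ \Gamma_{n,\,k}''(y)=\si_{n,\,k}\sum_{i=k}^{n-1}\si_{n,\,i}\,\Bq_i'(\si_{n,\,i}y) $, so, using $ \|\Bq_i'\|=O(\bar\eps^{2^i}) $ (the derivatives of $ \bde_i $ are of this order, cf.\ the proof of Proposition~\ref{recursive formula of d, u, and t}) and $ |\si_{n,\,i}|\le C\si^{n-i} $ with $ \si<1 $,
\begin{equation*}
\frac{\big\|\Gamma_{n,\,k}''\big\|}{|\si_{n,\,k}|}\ \le\ C\sum_{i=k}^{n-1}\si^{\,n-i}\,\bar\eps^{2^i}\ \le\ C'\,\si^{\,n-k}\,\bar\eps^{2^k}\ \le\ C'\,\si^{\,n-k},
\end{equation*}
the middle inequality because the double--exponential weight $ \bar\eps^{2^i}=\big(\bar\eps^{2^k}\big)^{2^{i-k}} $ forces the sum to be comparable to its $ i=k $ term. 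This gives both stated bounds, and since $ \si<1 $ they tend to $ 0 $ exponentially as $ n\to\infty $.

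The only genuinely delicate point is the bookkeeping in the first step: verifying that the summand $ {\bf R}_{n,\,k}(y) $ of the factorization of $ \Psi^n_k $ is the \emph{entire} nonlinear part of the $ \Bz $--coordinate (this rests on the affine, $ x $--free form of the $ y $-- and $ \Bz $--outputs of $ H_j^{-1} $) and that $ \Bd_{n,\,k} $ absorbs the whole first--order term at the tip, so that what is left is a genuine second--order remainder; after that, the estimate is the same geometric/super--exponential summation that pervades Section~\ref{sec-recursive formula of Psi-n-k}. An equivalent route, closer to the bookkeeping of Proposition~\ref{recursive formula of d, u, and t}, is to apply $ \Psi^n_k=\Psi^{k+1}_k\circ\Psi^n_{k+1} $ to obtain the recursion $ {\bf R}_{n,\,k}(y)={\bf R}_{n,\,k+1}(y)+\si_{n,\,k+1}^{-1}\,{\bf R}_{k+1,\,k}(\si_{n,\,k+1}y) $ alongside $ \Bd_{n,\,k}=\Bd_{n,\,k+1}+\Bd_{k+1,\,k} $, telescope, and use that the one--step term $ {\bf R}_{k+1,\,k} $ is $ \asymp $ the second--order Taylor remainder of $ \bde_k(y,f_k^{-1}(y),\B0) $ and hence of size $ O(\bar\eps^{2^k}) $; the derivative bound can also be read directly off the $ \Bq $--sum estimates of Corollary~\ref{cor-Y-k Z-k and q estimation} and Lemma~\ref{lem-relation between Y-n and Z-n with Y-k and Z-k}.
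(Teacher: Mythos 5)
Your proof is correct, and it takes a genuinely different route from the paper's. The paper proves the bound by peeling off the top level, $\Psi^n_k=\Psi^{n-1}_k\circ\Psi^n_{n-1}$, to get the one--step recursion $ {\bf R}_{n,\,k}(y)={\bf R}_{n,\,n-1}(y)+\si_{n,\,n-1}^{-1}{\bf R}_{n-1,\,k}(\si_{n,\,n-1}y) $, then writes each coordinate as $ a^j_{n,k}y^2+A^j_{n,k}(y)y^3 $ and runs separate inductions on $ a^j_{n,k} $, $ A^j_{n,k} $ and $ (A^j_{n,k})' $, the derivative estimate requiring its own (rather fiddly) comparison of Taylor coefficients. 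You instead derive the closed form $ \pi_{\Bz}\circ\Psi^n_{k,\vv}=\si_{n,k}\Bz+\sum_i\si_{i,k}\,\Bp_i(\si_{n,i}y) $ directly from the affine, $x$--free structure of $ H_j^{-1} $ in its last $ m+1 $ coordinates (this is exactly the identity the paper itself establishes later, in Proposition \ref{formal expression of pi-z Psi difference}, with no circularity), identify $ \si_{n,k}{\bf R}_{n,k} $ as the second--order Taylor remainder of that sum at the tip, and get both $ \|{\bf R}_{n,k}\| $ and $ \|({\bf R}_{n,k})'\| $ in one stroke from $ |\si_{n,k}|^{-1}\|\Gamma''_{n,k}\|\le C\sum_i\si^{n-i}\bar\eps^{2^i}\le C'\si^{n-k}\bar\eps^{2^k} $. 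This is cleaner: it replaces two coupled coefficient inductions by a single second--derivative bound and a geometric sum dominated by its first term, and it makes transparent why the linear part is exactly $ \Bd_{n,k} $ (your computation of $ \Gamma'_{n,k} $ reproduces Corollary \ref{distortion parts of z-coordinate of Psi}). Your alternative telescoping $ {\bf R}_{n,k}={\bf R}_{n,k+1}+\si_{n,k+1}^{-1}{\bf R}_{k+1,k}(\si_{n,k+1}\,\cdot) $ is the mirror image of the paper's recursion, peeled from the bottom level instead of the top; it yields the same bound. The only hypotheses you use beyond the paper's are Cauchy-type bounds $ \|\Bq_i'\|=O(\bar\eps^{2^i}) $, which follow from analyticity exactly as the paper's own $ \|D\bde_i\|=O(\bar\eps^{2^i}) $ does, and the harmless requirement $ \bar\eps<\si $ so that the weighted sum is comparable to its $ i=k $ term.
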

\begin{proof}
Let $ w =(x,y,\Bz) $ be the point in $ B(R^nF) $ and let $ \Psi^n_{n-1}(w) $ be $ w' = (x',y',\Bz') $. Recall $ \Psi^n_k = \Psi^n_{n-1} \circ \Psi^{n-1}_k $. Thus \msk
\begin{equation*}
\begin{aligned}
\Bz' &= \ \pi_z \circ \Psi^n_{n-1}(w) = \si_{n,\,n-1} \,\big[\, \Bd_{n,\,n-1}\, y + \Bz + {\bf R}_{n,\,n-1}(y) \,\big] \\
y' &= \ \pi_y \circ \Psi^n_{n-1}(w) = \si_{n,\,n-1}\,y .
\end{aligned} \msk
\end{equation*}
Then by the similar calculation and the composition of $ \Psi^{n-1}_k $ and $ \Psi^n_{n-1} $, we obtain the recursive formula of $ \pi_z \circ \Psi^n_k $ as follows  \msk
\begin{equation} \label{recursive formula of pi-z Psi}
\begin{aligned}
& \quad \ \  \pi_z \circ \Psi^n_k(w) =  \si_{n,\,k}\,\big[\,\Bd_{n,\,k}\,y + \Bz + {\bf R}_{n,\,k}(y) \,\big] \\[0.3em]
&= \ \pi_z \circ \Psi^{n-1}_k(w') =  \si_{n-1,\,k}\,\big[\,\Bd_{n-1,\,k}\,y' + \Bz' + {\bf R}_{n-1,\,k}(y') \,\big] \\[0.3em]
&= \ \si_{n-1,\,k}\,\big[\,\Bd_{n-1,\,k}\,\si_{n,\,n-1}\,y + \si_{n,\,n-1} \,\big[\, \Bd_{n,\,n-1}\:\! y + \Bz + {\bf R}_{n,\,n-1}(y) \,\big] + {\bf R}_{n-1,\,k}(\si_{n,\,n-1}\,y) \,\big] \\[0.3em]
&= \ \si_{n,\,k}\,( \Bd_{n-1,\,k} + \Bd_{n,\,n-1})y + \si_{n,\,k}\,\Bz + \si_{n,\,k}\,{\bf R}_{n,\,n-1}(y) + \si_{n-1,\,k}\,{\bf R}_{n-1,\,k}(\si_{n,\,n-1}\,y) .
\end{aligned} \msk
\end{equation}
By Proposition \ref{recursive formula of d, u, and t}, $ \Bd_{n,\,k} = \Bd_{n-1,\,k} + \Bd_{n,\,n-1} $. Let us compare the left side of \eqref{recursive formula of pi-z Psi} with the right side of it. Recall the equation $ \si_{n,\,k} = \si_{n,\,n-1} \cdot \si_{n-1,\,k} $. Then
\ssk
\begin{equation} \label{eq-bf R-n-k recursive relation}
\begin{aligned}
{\bf R}_{n,\,k}(y) = {\bf R}_{n,\,n-1}(y) + \frac{1}{\si_{n,\,n-1}}\ {\bf R}_{n-1,\,k}(\si_{n,\,n-1}\,y) .
\end{aligned} \ssk
\end{equation}
\nin The map $ {\bf R}_{n,\,k}(y) $ has the expression of each coordinate maps, that is,
$$ {\bf R}_{n,\,k}(y) = ( R^1_{n,\,k}(y),\; R^2_{n,\,k}(y),\; \ldots ,\; R^m_{n,\,k}(y)) $$
for $ k < n $ and for $ 1 \leq j \leq m $. Each $ R^j_{n,\,k}(y) $ is the sum of second and higher order terms of $ \pi_{z_j} \circ \Psi^n_k $ for $ k < n $. Thus \msk
\begin{equation} \label{eq-component of R-n-k recursive relation}
\begin{aligned}
R^j_{n,\,k}(y) =  a_{n,\,k}^j\,y^2 + A_{n,\,k}^j(y)\cdot y^3
\end{aligned} \msk
\end{equation}

\nin Moreover, $ \|\:\!R^j_{n,\,n-1} \| = O(\bar \eps^{2^{n-1}}) $ because $ R^j_{n,\,n-1}(y) $ is the second and higher order terms of the map $ \pi_{z_j} \circ \bde_{n-1}(\si_{n,\,n-1}\,y,\;f_{n-1}^{-1}(\si_{n,\,n-1}\,y),\;\B0) $. The equation \eqref{eq-bf R-n-k recursive relation} is applies to each coordinate maps. Then \msk
\begin{equation*}
\begin{aligned}
R^j_{n,\,k}(y) = \frac{1}{\si_{n,\,n-1}}\ R^j_{n-1,\,k}(\si_{n,\,n-1}\,y) + c^j_{n,\,k}\,y^2 + O(\bar \eps^{2^{n-1}}y^3)
\end{aligned} \ssk 
\end{equation*}
where $ c^j_{n,\,k} = O(\bar \eps^{2^{n-1}}) $. The recursive formula for $ a_{n,\,k}^j $ and $ A_{n,\,k}^j $ as follows \msk
\begin{equation*}
\begin{aligned}
R^j_{n,\,k}(y) = \frac{1}{\si_{n,\,n-1}}\ \Big( a_{n-1,\,k}^j\cdot (\si_{n,\,n-1}\,y)^2 + A_{n-1,\,k}^j(\si_{n,\,n-1}\,y) \cdot (\si_{n,\,n-1}\,y)^3 \Big) + O(\bar \eps^{2^{n-1}}y^3) .
\end{aligned} \msk
\end{equation*}
Then \ssk $ a_{n,\,k}^j = \si_{n,\,n-1}\,a_{n-1,\,k}^j + c_{n,\,k}^j $ and $ \| \:\! A_{n,\,k}^j \| \leq \| \,\si_{n,\,n-1}\|^2 \| \:\! A_{n-1,\,k}^j \| + O(\bar \eps^{2^{n-1}}) $. Hence, for each fixed $ k<n $,\, $ a_{n,\,k}^j \ra 0 $ and $ A_{n,\,k}^j \ra 0 $ exponentially fast as $ n \ra \infty $. \ssk $ R^j_{n,\,k}(y) $ converges to zero as $ n \ra \infty $ exponentially fast. 
\ssk \\
Let us estimate $ \| \:\! (A^j_{n,\,k})' \| $ in order to measure how fast $ (R^j_{n,\,k})'(y) $ is convergent. By the similar method of the recursive formula of $ R^j_{n,\,k}(y) $, we have the expression and recursive formula of $ (R^j_{n,\,k})'(y) $ as follows \msk
\begin{equation*}
\begin{aligned}
(R^j_{n,\,k})'(y) &= \ 2\, a_{n,\,k}^j\,y + 3 \:\! A_{n,\,k}^j(y)\cdot y^2 + (A^j_{n,\,k})'(y)\cdot y^3 \\[0.5em]
  (R^j_{n,\,k})'(y) &= \ (R^j_{n,\,n-1})'(y) + R^j_{n-1,\,k}(\si_{n,\,n-1}\,y) \\[0.2em]
&= \ R^j_{n-1,\,k}(\si_{n,\,n-1}\,y) + 2\, c_{n,\,k}^j\,y + O(\bar \eps^{2^{n-1}}y^2) .
\end{aligned} \msk
\end{equation*}
Then
\begin{equation*}
\begin{aligned}
(R^j_{n,\,k})'(y) &= \ 2\, a^j_{n-1,\,k}\,\si_{n,\,n-1}\,y + 3 \:\! A_{n-1,\,k}^j(\si_{n,\,n-1}\,y)\cdot (\si_{n,\,n-1}\,y)^2 \\[0.3em]
 & \qquad + (A^j_{n,\,k})'(\si_{n,\,n-1}\,y)\cdot (\si_{n,\,n-1}\,y)^3 + 2\, c_{n,\,k}^j\,y + O(\bar \eps^{2^{n-1}}y^2) .
\end{aligned} \msk
\end{equation*}
Let us compare quadratic and higher order terms of $ (R^j_{n,\,k})'(y) $ \ssk
\begin{equation*}
\begin{aligned}
3 \, A_{n,\,k}^j(y)\cdot y^2 + (A^j_{n,\,k})'(y)\cdot y^3 &= \ 3 \:\! A_{n-1,\,k}^j(\si_{n,\,n-1}\,y)\cdot (\si_{n,\,n-1}\,y)^2 \\[0.2em]
 & \qquad + (A^j_{n,\,k})'(\si_{n,\,n-1}\,y)\cdot (\si_{n,\,n-1}\,y)^3 + O(\bar \eps^{2^{n-1}}y^2) .
\end{aligned} 
\end{equation*}
Thus
\begin{equation*}
\begin{aligned}
(A^j_{n,\,k})'(y)\,y &= (A^j_{n,\,k})'(\si_{n,\,n-1}\,y)\cdot \si_{n,\,n-1}^3 \,y - 3 \:\! A^j_{n,\,k}(y) + 3 \:\! A^j_{n-1,\,k}(\si_{n,\,n-1}\,y)\cdot \si_{n,\,n-1}^2 \\[0.2em]
&\qquad + O(\bar \eps^{2^{n-1}}) .
\end{aligned} \msk
\end{equation*}
Then 
\begin{equation*}
\begin{aligned}
\| \:\! (A^j_{n,\,k})' \| &\leq \ \| \:\! (A^j_{n-1,\,k})' \| \cdot \|\:\! \si_{n,\,n-1} \|^3 + 3 \| \:\! A^j_{n,\,k}\| + 3 \| \:\! A^j_{n-1,\,k}\| \cdot \|\, \si_{n,\,n-1} \|^2 + O(\bar \eps^{2^{n-1}}) \\[0.3em]
&\leq \ \| \,(A^j_{n-1,\,k})' \| \cdot \|\, \si_{n,\,n-1} \|^3 + C \|\, \si_{n,\,n-1} \|^2
\end{aligned} \msk
\end{equation*}
for some $ C>0 $. Then $ (A^j_{n,\,k})' \ra 0 $ as $ n \ra \infty $ exponentially fast. Then so does $ (R^j_{n,\,k})'(y) $ exponentially fast for all $ 1 \leq j \leq m $. Furthermore, since $ |\:\!a_{n,\,k}^j |$,  $ \| \:\! A_{n,\,k}^j \|$ and $ \|\:\! (A^j_{n,\,k})'\| $ are bounded above by $ C\,\si^{n-k} $ for some $ C> 0 $ and for every $ 1 \leq j \leq m $, so does the norm of $ {\bf R}_{n,\,k} $ and $ ({\bf R}_{n,\,k})' $, that is,
\begin{equation*}
\begin{aligned}
\| \:\! {\bf R}_{n,\,k} \| \leq C\,\si^{n-k} 
, \qquad \| \:\! ({\bf R}_{n,\,k})' \| \leq C\,\si^{n-k} .
\end{aligned}
\end{equation*}
\end{proof}
\msk
\nin Let $ w^1 $ and $ w^2 $ be two points in $ B(R^nF) $ and $ w^j = (x^j,\, y^j,\, \Bz^j) $ for $ j =1,2 $ for the next Proposition. Let $ \Psi^n_{i,\, {\bf v}}(w^j) = w_i^j $ for $ i \in \N $ and $ j =1,2 $. In particular, let $ \dot w^j = (\dot x^j,\, \dot y^j,\, \dot \Bz^j) $ be $ \Psi^n_k(w^j) $ for $ j = 1,2 $.
\msk

\begin{prop} \label{formal expression of pi-z Psi difference}
Let $ F \in \II_B(\bar \eps) $. 
Then 
\begin{equation*}
\begin{aligned}
\dot \Bz^1 - \dot \Bz^2 = \pi_{\Bz} \circ \Psi^n_k(w^1) - \pi_{\Bz} \circ \Psi^n_k(w^2) =  \si_{n,\,k} \cdot (\Bz^1 - \Bz^2) + \si_{n,\,k} \sum_{i=k}^{n-1} \Bq_i (\si_{n,\,i} \,\bar y) \cdot (y^1 -y^2)
\end{aligned}
\end{equation*}
where 
$ \bar y $ is in the line segment between $ y_1 $ and $ y_2 $. Moreover,
\begin{equation*}
\begin{aligned}
\sum_{i=k}^{n-1} \Bq_i \circ (\si_{n,\,i} \,\bar y) \cdot (y^1 -y^2) = \Bd_{n,\,k} \cdot (y^1 -y^2) + {\bf R}_{n,\,k}(y^1) - {\bf R}_{n,\,k}(y^2) .
\end{aligned}
\end{equation*}
\end{prop}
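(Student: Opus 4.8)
The plan is to read everything off the explicit form of $\pi_\Bz\circ\Psi^n_k$ established in Proposition \ref{exponential smallness of R-n-k}, namely
\[
\pi_\Bz\circ\Psi^n_k(x,y,\Bz)=\si_{n,k}\big[\,\Bd_{n,k}\,y+\Bz+{\bf R}_{n,k}(y)\,\big].
\]
The key observation is that the right-hand side does not involve $x$ and is \emph{affine} in $\Bz$ with constant slope $\si_{n,k}\,\Id$. So, writing $g_{n,k}(y):=\si_{n,k}\big[\,\Bd_{n,k}\,y+{\bf R}_{n,k}(y)\,\big]$ for the genuinely $y$-dependent part, subtracting the values at $w^1$ and $w^2$ gives, exactly,
\[
\dot\Bz^1-\dot\Bz^2=\si_{n,k}\,(\Bz^1-\Bz^2)+\big(g_{n,k}(y^1)-g_{n,k}(y^2)\big),
\]
and the mean value theorem applied coordinatewise to the vector-valued $g_{n,k}$ gives $g_{n,k}(y^1)-g_{n,k}(y^2)=g_{n,k}'(\bar y)\,(y^1-y^2)$ with $\bar y$ on the segment joining $y^1$ and $y^2$; that $\bar y$ may a priori differ per coordinate is harmless, since only its location on the segment is used later.

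The substance is then the identity $g_{n,k}'(y)=\si_{n,k}\sum_{i=k}^{n-1}\Bq_i(\si_{n,i}\,y)$, which is exactly the asserted displayed formula. I would prove it by induction on $n-k$. The inductive step uses the recursion \eqref{eq-bf R-n-k recursive relation} for ${\bf R}_{n,k}$ together with $\Bd_{n,k}=\Bd_{n,n-1}+\Bd_{n-1,k}$ from Proposition \ref{recursive formula of d, u, and t}; combining these and the relation $\si_{n,k}=\si_{n,n-1}\si_{n-1,k}$ yields the clean recursion
\[
g_{n,k}(y)=\si_{n,k}\big[\,\Bd_{n,n-1}\,y+{\bf R}_{n,n-1}(y)\,\big]+g_{n-1,k}(\si_{n,n-1}\,y).
\]
Differentiating, feeding in the inductive hypothesis for $g_{n-1,k}'$, and using the telescoping identities $\si_{n,n-1}\si_{n-1,k}=\si_{n,k}$ and $\si_{n-1,i}\si_{n,n-1}=\si_{n,i}$, the sum collapses to the claimed form, provided one knows the single-step identity $\Bd_{i+1,i}+{\bf R}_{i+1,i}'(y)=\Bq_i(\si_{i+1,i}\,y)$ (which is also the base case $n=k+1$).

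That single-step identity I would get from $\psi^{i+1}_v=H_i^{-1}\circ\La_i^{-1}$: since $H_i(x,y,\Bz)=(f_i(x)-\eps_i(w),\,y,\,\Bz-\bde_i(y,f_i^{-1}(y),\B0))$, inverting shows $\pi_\Bz\circ\psi^{i+1}_v(x,y,\Bz)=\si_i\Bz+\bde_i(\si_i y,f_i^{-1}(\si_i y),\B0)$, whose $y$-derivative is $\si_i\,\Bq_i(\si_i y)$ by the definition of $\Bq_i$; comparing with $\pi_\Bz\circ\Psi^{i+1}_i=\si_{i+1,i}[\Bd_{i+1,i}\,y+\Bz+{\bf R}_{i+1,i}(y)]$ and $\si_i=\si_{i+1,i}$ gives the identity, up to the tip-translation that the paper already folds into the convention identifying $\Psi^n_{k,\vv}$ with $\Psi^n_k$ (equivalently, this is just the characterization of $\Bd_{n,n-1}$ and ${\bf R}_{n,n-1}$ already used in the proof of Proposition \ref{exponential smallness of R-n-k}). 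Finally the ``moreover'' is immediate: equating the two expressions for $g_{n,k}(y^1)-g_{n,k}(y^2)$ — one from the mean value theorem, one from the definition of $g_{n,k}$ — and cancelling the common factor $\si_{n,k}$ gives $\sum_{i=k}^{n-1}\Bq_i(\si_{n,i}\bar y)(y^1-y^2)=\Bd_{n,k}(y^1-y^2)+{\bf R}_{n,k}(y^1)-{\bf R}_{n,k}(y^2)$.

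I expect the main obstacle to be bookkeeping rather than any genuine difficulty: keeping the scaling factors $\si_{n,k},\si_{i,k},\si_{n,i}$ and their one-step counterparts $\si_{i+1,i}$ consistent through the induction (and, if one insists on the honest $\si_{n,k}$ rather than $\si^{n-k}$, tracking the $(1+O(\rho^k))$ errors), together with being scrupulous about whether the argument of $\Bq_i$ is read in tip-centered or raw $\Psi^n_{i,\vv}$-coordinates, so that it comes out exactly as $\si_{n,i}\bar y$ in the statement.
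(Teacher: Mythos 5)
Your proof is correct, but it runs in the opposite logical direction from the paper's. The paper first telescopes the composition $\Psi^n_k=\psi^{k+1}_k\circ\cdots\circ\psi^n_{n-1}$ into the explicit formula $\pi_{\Bz}\circ\Psi^n_k(w)=\si_{n,k}\Bz+\sum_{i=k}^{n-1}\si_{i,k}\,\Bp_i(\si_{n,i}y)$ with $\Bp_i(y)=\bde_i(y,f_i^{-1}(y),\B0)$, obtains the first display by applying the mean value theorem to each $\Bp_i$ separately, and then gets the second display by comparing with the closed form $\si_{n,k}[\Bd_{n,k}y+\Bz+{\bf R}_{n,k}(y)]$; the pointwise identity $\sum_{i=k}^{n-1}\Bq_i(\si_{n,i}y)=\Bd_{n,k}+({\bf R}_{n,k})'(y)$ is only extracted afterwards, in Corollary \ref{distortion parts of z-coordinate of Psi}, by letting $y^2\to y^1$. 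You instead prove that derivative identity first, by induction on $n-k$ using the recursions $\Bd_{n,k}=\Bd_{n,n-1}+\Bd_{n-1,k}$ and \eqref{eq-bf R-n-k recursive relation} (neither of which depends on this proposition, so there is no circularity), and then read off both displays from the closed form plus a single application of the mean value theorem to $g_{n,k}$. Your single-step identity $\Bd_{i+1,i}+({\bf R}_{i+1,i})'(y)=\Bq_i(\si_i y)$ and the collapsing of scaling factors via $\si_{n-1,i}\,\si_{n,n-1}=\si_{n,i}$ both check out. What your route buys is a direct, MVT-free proof of Corollary \ref{distortion parts of z-coordinate of Psi} as a genuine identity of functions, with the proposition as an immediate consequence; what the paper's route buys is that it never needs the induction, since the telescoped composition formula already exhibits the sum $\sum_i\Bq_i$ explicitly. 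One small point to keep in mind either way: the mean value theorem for the vector-valued $g_{n,k}$ (or $\Bp_i$) produces a possibly different $\bar y$ in each of the $m$ coordinates, as you note; the statement's single $\bar y$ is the same mild abuse the paper itself commits.
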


\begin{proof}
Firstly, \ssk let us express $ \pi_{\Bz} \circ \Psi^n_k(w) $. Let $ \Bp_{\,i}(y) $ be $ \bde_i (y, f^{-1}_i(y),\B0) $ in order to simplify the expression. Recall the definition of $ \Bq_i(y) $, namely, $ \frac{d}{dy}\,\Bp_{\,i}(y) = \Bq_i(y) $. \ssk 
Let $ \Psi^n_{i,\, {\bf v}}(w) = w_i $ for $ k \leq i \leq n-1 $ and let $ w_i = (x_i,\, y_i,\, \Bz_i) $.\footnote{For notational compatibility, let $\Psi^i_i = \id $ and let $ \si_{i,\, i} =1 $ for every $ i \in \N $.} \ssk Let $ w = w_n $. Recall $ \pi_{\Bz} \circ \psi^{i+1}_i (w_{i+1}) = \si_i\, \Bz_{i+1} + \Bp_{\,i} (\si_i\, y_{i+1}) $. Since $ \Psi^n_k = \psi^{k+1}_k \circ \Psi^n_{k+1} $, we estimate $ \Bz_k $ using recursive formula \msk
\begin{equation} \label{pi-z Psi-n,k expression}
\begin{aligned}
\Bz_k &= \ \pi_{\Bz} \circ \Psi^n_k(w) = \pi_{\Bz} \circ \psi^{k+1}_k(w_{k+1}) \\[0.3em]
&= \ \si_k \cdot \Bz_{k+1} + \Bp_{\,k} (\si_k \cdot y_{k+1}) \\[0.3em]
&= \ \si_k \big[\,\si_{k+1} \cdot \Bz_{k+2} + \Bp_{\,k+1} (\si_{k+1} \cdot y_{k+2})\,\big] + \Bp_{\,k}(\si_k \cdot y_{k+1}) \\[0.3em]
&= \ \si_k \si_{k+1} \cdot \Bz_{k+2} + \si_k \cdot \Bp_{\,k+1} (\si_{k+1} \cdot y_{k+2}) + \Bp_{\,k}(\si_k \cdot y_{k+1}) \\
&\hspace{2in} \vdots \\
&= \ \si_k \si_{k+1} \cdots \si_{n-1} \cdot \Bz + \big[\, \si_k \si_{k+1} \cdots \si_{n-2} \cdot \Bp_{\,n-1}(\si_{n-1} \cdot y) \\
&\qquad + \si_k \si_{k+1} \cdots \si_{n-3} \cdot \Bp_{\,n-2}(\si_{n-2} \cdot y_{n-1}) + \cdots + \Bp_{\,k}(\si_k \cdot y_{k+1}) \,\big] \\[0.6em]
&= \ \si_{n,\, k} \cdot \Bz + \si_{n-1,\, k} \cdot \Bp_{\,n-1}(\si_{n-1} \cdot y) + \si_{n-2,\, k} \cdot \Bp_{\,n-2}(\si_{n-2} \cdot y_{n-1}) + \cdots + \Bp_{\,k}(\si_k \cdot y_{k+1}) \\
&= \ \si_{n,\, k} \cdot \Bz + \sum_{i=k}^{n-1} \si_{i,\, k} \cdot \Bp_{\,i}\:\!(\si_i \cdot y_{i+1})
\end{aligned}
\end{equation}
where $ \si_{k+1,\, k} = \si_k $. 
Moreover, 
$ H_i \circ \La_i(w) = (\phi_i^{-1}(\si_i w),\ \si_i\, y,\ \bullet ) $ \ for each $ k \leq i \leq n-1 $.  
Thus
\begin{equation*}
\begin{aligned}
\si_{n,\, i} \cdot y = \si_i \cdot y_{i+1} = y_i 
\end{aligned} \msk
\end{equation*} 

\nin Secondly, let us estimate \ssk $ \dot \Bz^1 - \dot \Bz^2 = \pi_{\Bz} \circ \Psi^n_k(w^1) - \pi_{\Bz} \circ \Psi^n_k(w^2) $ where $ w^j \in B(R^nF) $ for $ j=1,2 $. By the equation \eqref{pi-z Psi-n,k expression} and Mean Value Theorem, we obtain that \msk
\begin{equation} \label{difference of z-1 and z-2}
\begin{aligned}
\dot \Bz^1 - \dot \Bz^2 &= \ \pi_{\Bz} \circ \Psi^n_k(w^1) - \pi_{\Bz} \circ \Psi^n_k(w^2) \\[0.3em]
&= \ \si_{n,\,k} \cdot (\Bz^1 - \Bz^2) + \sum_{i=k}^{n-1} \si_{i,\, k} \cdot \big[\,\Bp_{\,i}\:\!(\si_i \cdot y_{i+1}^1) - \Bp_{\,i}\:\!(\si_i \cdot y_{i+1}^2)\,\big]  \\
&= \ \si_{n,\, k} \cdot (\Bz^1 - \Bz^2) + \sum_{i=k}^{n-1} \si_{i,\, k} \cdot \big[\,\Bp_{\,i}\:\!(\si_{n,\, i} \cdot y^1) - \Bp_{\,i}\:\!(\si_{n,\, i} \cdot y^2)\,\big]  \\
&= \ \si_{n,\, k} \cdot (\Bz^1 - \Bz^2) + \sum_{i=k}^{n-1} \si_{i,\, k} \cdot \Bq_{\,i}\circ (\si_{n,\, i}\cdot \bar y )  \cdot \si_{n,\, i+1} \cdot ( y^1 - y^2 ) \\[-0.3em]
&= \ \si_{n,\, k} \cdot (\Bz^1 - \Bz^2) + \si_{n,\, k}  \sum_{i=k}^{n-1} \Bq_{\,i}\circ (\si_{n,\, i}\cdot \bar y )  \cdot ( y^1 - y^2 )
\end{aligned}
\end{equation}
where $ \bar y $ is in the line segment between $ y^1 $ and $ y^2 $ which is contained in $ \pi_y \circ B(R^nF) $. Moreover, by the expression of $ \Psi^n_k $, 
$$ \pi_{\Bz} \circ \Psi^n_k(w) = \si_{n,\, k}\:\! \big[\,\Bd_{n,\, k}\, y + z + {\bf R}_{n,\,k}(y) \,\big] . $$
Then
\begin{equation} \label{difference of z-1 and z-2 2}
\begin{aligned}
\dot \Bz^1 - \dot \Bz^2 &= \ \pi_{\Bz} \circ \Psi^n_k(w^1) - \pi_{\Bz} \circ \Psi^n_k(w^2) \\[0.3em]
&= \ \si_{n,\, k}\, \big[\,\Bd_{n,\, k}\, (y^1 -y^2) + (\Bz_1 - \Bz_2) + {\bf R}_{n,\,k}(y^1) -  {\bf R}_{n,\,k}(y^2) \,\big] \\[0.3em]
&= \ \si_{n,\, k} \cdot (\Bz^1 - \Bz^2) + \si_{n,\, k} \cdot \big[\,\Bd_{n,\, k}\, (y^1 -y^2) + {\bf R}_{n,\,k}(y^1) -  {\bf R}_{n,\,k}(y^2) \,\big] .
\end{aligned}
\end{equation}
Compare equations \eqref{difference of z-1 and z-2} and \eqref{difference of z-1 and z-2 2}. Then
$$  \sum_{i=k}^{n-1} \Bq_{\,i}\circ (\si_{n,\, i}\cdot \bar y )  \cdot ( y^1 - y^2 ) = \Bd_{n,\, k}\, (y^1 -y^2) + {\bf R}_{n,\,k}(y^1) -  {\bf R}_{n,\,k}(y^2) $$
for $ k < n $. The proof is complete.
\end{proof}
\msk

\begin{cor} \label{distortion parts of z-coordinate of Psi}
Let $ F \in \II_B(\bar \eps) $. Then
\begin{equation*}
\sum_{i=k}^{n-1} \Bq_i \circ (\pi_y \circ \Psi^n_{i,\,{\bf v}}(w)) = \Bd_{n,\,k} + ({\bf R}_{n,\,k})'(y)
\end{equation*}
for every $ w \in B(R^nF) $ and for each $ k < n $. Moreover,
$$ \lim_{n \ra \infty}\; \sum_{i=k}^{n-1} \Bq_i \circ (\pi_y \circ \Psi^n_{i,\,{\bf v}}(w)) = \Bd_{*,\,k} . $$
The convergence is exponentially fast.
\end{cor}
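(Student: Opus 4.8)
The plan is to extract the identity by differentiating, with respect to $y$, the two available expressions for the $\Bz$-component of $\Psi^n_k$ and matching coefficients; the limit statement then follows immediately from the convergence rates already established. Recall from the proof of Proposition \ref{formal expression of pi-z Psi difference} the explicit form
$$ \pi_{\Bz}\circ\Psi^n_k(w) \;=\; \si_{n,\,k}\cdot\Bz \;+\; \sum_{i=k}^{n-1}\si_{i,\,k}\cdot\Bp_{\,i}(\si_i\cdot y_{i+1}), $$
where $\Bp_{\,i}(y)=\bde_i(y,f_i^{-1}(y),\B0)$, $\frac{d}{dy}\Bp_{\,i}=\Bq_i$, and $\si_i\cdot y_{i+1}=\si_{n,\,i}\cdot y=\pi_y\circ\Psi^n_{i,\,{\bf v}}(w)$ with $\si_{i,\,k}\cdot\si_{n,\,i}=\si_{n,\,k}$. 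On the other hand, the normal form of $\Psi^n_k$ gives
$$ \pi_{\Bz}\circ\Psi^n_k(w) \;=\; \si_{n,\,k}\,\big[\,\Bd_{n,\,k}\,y + \Bz + {\bf R}_{n,\,k}(y)\,\big]. $$

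First I would differentiate both expressions in the variable $y$ (holding $\Bz$ fixed). The first yields, by the chain rule, $\sum_{i=k}^{n-1}\si_{i,\,k}\cdot\Bq_i(\si_{n,\,i}y)\cdot\si_{n,\,i}=\si_{n,\,k}\sum_{i=k}^{n-1}\Bq_i(\si_{n,\,i}y)$; the second yields $\si_{n,\,k}\big[\Bd_{n,\,k}+({\bf R}_{n,\,k})'(y)\big]$. Since $\si_{n,\,k}=(-\si)^{n-k}(1+O(\rho^k))\neq0$, dividing by it gives exactly
$$ \sum_{i=k}^{n-1}\Bq_i\circ(\pi_y\circ\Psi^n_{i,\,{\bf v}}(w))\;=\;\Bd_{n,\,k}+({\bf R}_{n,\,k})'(y), $$
as claimed; note the left side is independent of $\Bz$ and of the horizontal variable $x$, consistent with the right side. (Alternatively, one can divide the second displayed identity of Proposition \ref{formal expression of pi-z Psi difference} by $y^1-y^2$ and let $y^1\to y^2=y$, in which case $\bar y\to y$ and the difference quotient of ${\bf R}_{n,\,k}$ tends to $({\bf R}_{n,\,k})'(y)$; this gives the same conclusion.)

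For the limit, I would invoke the two convergence results already in hand: by Proposition \ref{recursive formula of d, u, and t}, $\Bd_{n,\,k}\to\Bd_{*,\,k}$ super exponentially fast as $n\to\infty$, and by Proposition \ref{exponential smallness of R-n-k}, $\|({\bf R}_{n,\,k})'\|\le C\,\si^{n-k}\to0$. Adding these, $\sum_{i=k}^{n-1}\Bq_i\circ(\pi_y\circ\Psi^n_{i,\,{\bf v}}(w))\to\Bd_{*,\,k}$, with the rate governed by the larger of the two error terms, namely $O(\si^{n-k})$, hence exponentially fast. I do not anticipate a genuine obstacle here: the only point requiring care is the scaling-factor bookkeeping, i.e. the identities $\si_i y_{i+1}=\si_{n,\,i}y$ and $\si_{i,\,k}\si_{n,\,i}=\si_{n,\,k}$, both of which are recorded in the preceding material, so that the common factor $\si_{n,\,k}$ cancels cleanly.
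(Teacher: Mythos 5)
Your proposal is correct and follows essentially the same route as the paper: the paper obtains the identity by dividing the two expressions for $\pi_{\Bz}\circ\Psi^n_k(w^1)-\pi_{\Bz}\circ\Psi^n_k(w^2)$ from Proposition \ref{formal expression of pi-z Psi difference} by $y^1-y^2$ and letting the points coalesce, which is exactly the difference-quotient version of your differentiation argument (and which you already note as an alternative), with the same cancellation of $\si_{n,\,k}$ via $\si_{i,\,k}\si_{n,\,i}=\si_{n,\,k}$. The limit statement is handled identically, by citing Proposition \ref{recursive formula of d, u, and t} for $\Bd_{n,\,k}\to\Bd_{*,\,k}$ and Proposition \ref{exponential smallness of R-n-k} for $({\bf R}_{n,\,k})'\to 0$.
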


\begin{proof}
Let us compare the equation \eqref{difference of z-1 and z-2} and \eqref{difference of z-1 and z-2 2}

\begin{align}
\sum_{i=k}^{n-1} \si_{i,\, k} \cdot \big[\,\Bp_{\,i}\,(\si_{n,\, i} \cdot y^1) - \Bp_{\,i}\,(\si_{n,\, i} \cdot y^2)\,\big] &\ = \ \si_{n,\, k} \cdot \big[\,\Bd_{n,\, k}\, (y^1 -y^2) + {\bf R}_{n,\,k}(y^1) -  {\bf R}_{n,\,k}(y^2) \,\big] \\ 
\label{slope of secant lines}
\sum_{i=k}^{n-1} \si_{i,\ k} \cdot \frac{\Bp_{\,i}\,(\si_{n,\,i} \cdot y^1) - \Bp_{\,i}\,(\si_{n,\,i} \cdot y^2)}{y^1 -y^2} 
 &\ = \ \si_{n,\, k} \cdot \left[\,\Bd_{n,\, k} + \frac{{\bf R}_{n,\,k}(y^1) - {\bf R}_{n,\,k}(y^2)}{y^1 -y^2}  \,\right] .
\end{align}
Since both $ y^1 $ and $ y^2 $ are arbitrary, we may choose two points $ y $ and $ y + h $ instead of $ y^1 $ and $ y^2 $. The differentiability of both $ \Bp_i $ and $ {\bf R}_{n,\,k} $ enable us to take the limit of \eqref{slope of secant lines} as $ h \ra 0 $. Then 
\begin{equation*}
\si_{n,\, k} \cdot \sum_{i=k}^{n-1} \Bq_{\,i}\circ (\si_{n,\,i} \cdot y ) =  \si_{n,\, k} \cdot \big[\,\Bd_{n,\, k} + ({\bf R}_{n,\,k})'(y)\,\big]
\end{equation*}
for every $ y \in \pi_y(B(R^nF)) $. Moreover, $ \Bd_{n,\,k} \ra \Bd_{*,\,k} $ as $ n \ra \infty $ super exponentially fast by Proposition \ref{recursive formula of d, u, and t} and $ ({\bf R}_{n,\,k})' $ converges to zero exponentially fast by Proposition \ref{exponential smallness of R-n-k}. Hence,
\begin{equation*}
\begin{aligned}
\lim_{n \ra \infty} \;\sum_{i=k}^{n-1} \Bq_i \circ (\pi_y \circ \Psi^n_{i,\,{\bf v}}(w)) &= \ \lim_{n \ra \infty} \big[\,\Bd_{n,\,k} + ({\bf R}_{n,\,k})'(y)\,\big] \\
 &= \ \Bd_{*,\,k} .
\end{aligned}
\end{equation*}
\end{proof}
\msk
\nin Let us collect the estimations of numbers and functions which are used in the following sections. \msk
\begin{enumerate}
\item $ |\:\! t_{k+1,\,k} | $, $ \|\:\! \Bu_{k+1,\,k} \| $ and $ \|\:\! \Bd_{k+1,\,k} \| $ are $ O \big(\bar \eps^{2^k} \big) $. \msk
\item $ \|\:\! \Bu_{n,\,k} \| $ and $ \|\:\! \Bd_{n,\,k} \| $ are $ O \big(\bar \eps^{2^k} \big) $. \msk
\item $ \si_{n,\,k} = (-\si)^{n-k}(1+O(\rho^k)) $ and $ \alpha_{n,\,k} = \si^{2(n-k)}(1+O(\rho^k)) $ for $ k<n $ and for some $ 0<\rho<1 $. \msk
\item $ \|\:\! {\bf R}_{k+1,\,k} \| $ is $ O \big(\bar \eps^{2^k} \big) $. \msk
\item $ \|\:\! {\bf R}_{n,\,k} \| $ and $ \|\:\! ({\bf R}_{n,\,k})' \| $ are $ O \big(\si^{n-k} \bar \eps^{2^k} \big) $ by Proposition \ref{exponential smallness of R-n-k}.
\end{enumerate}

\bsk

\section{Unbounded geometry on the Cantor set}
The unbounded geometry of a certain class of $ m+2 $ dimensional H\'enon-like maps would be proved. 
The notations in this section is used in \cite{HLM} and adapted in $ m+2 $ dimensional maps. 
%
Recall the {\em pieces} $B^n_{\bf w} \equiv B^n_{\bf w}(F) = \Psi^n_{\bf w}(B)$ on the $n^{th}$ level or $n^{th}$ generation 
. The word, $ {\bf w} = (w_1 \ldots  w_n) \in W^n := \lbrace v, c \rbrace^n  $ has length $ n $. Recall that the map 
$$ {\bf w} = (w_1 \ldots  w_n) \mapsto \sum_{k=0}^{n-1} w_{k+1}2^k $$
is the one to one correspondence between words of length $ n $ and the additive group of numbers with base 2 mod $ 2^n $. Let the subset of the critical Cantor set on each pieces be $ \OO_{\bf w} \equiv B^n_{\bf w} \cap \OO $. Then by the definition of $ \OO_{\bf w} $, 
we have the following fact.\ssk
\begin{enumerate}
\item $$ \OO_F = \bigcup_{ {\bf w} \in W^n}  \OO_{\bf w} $$
\item $ F(B^n_{\bf w}) \subset B^n_{{\bf w}+1} $  for every $ {\bf w} = (w_1 \ldots  w_n) \in W^n $.\msk
\item $ \diam(B^n_{\bf w}) \leq C \si^n $ for some $ C>0 $ depending only on $ B $ and $ \bar \eps $.\ssk
\end{enumerate}
\comm{*******************
Then we can define boxing of the Cantor set of $n^{th}$ generation. 
\msk
\begin{defn} \label{boxing}
Let $ F \in \II(\bar \eps) $. A collection of the simply connected sets with interior $ {\bf B}^n = \{ B^n_{\bf w} \Subset \Dom(F) \, | \, {\bf w} \in W^n  \} $ is called {\em boxing} of $ \OO_F $ if \msk
\begin{enumerate}
\item $ \OO_{\bf w} \Subset B^n_{\bf w} $ for each $ {\bf w} \in W^n $. \msk
\item $ B^n_{\bf w} $ and $ B^n_{\bf w'} $ has disjoint closure if $ {\bf w} \neq {\bf w'}$. \msk
\item $ F(B^n_{\bf w}) \subset B^n_{{\bf w}+1} $  for every $ {\bf w} \in W^n $. \msk
\item Each element of $ {\bf B}^n $ is nested for each $ n $, that is,
$$ B^{n+1}_{{\bf w} \nu} \subset B^n_{\bf w}, \ {\bf w} \in W^n,\ \ \nu \in \{v, c\} . $$
\end{enumerate}
\end{defn} 
\ssk
\nin On the above definition, the elements of boxing are just topological boxes. However, the geometry of the boxing can depend on not only $ F $ and $ \bar \eps $, but also the boxing itself. Then we define the {\em canonical boxing}, $ {\bf B}^n_{can} $ which is the set of pieces $B^n_{\bf w} \equiv \Psi^n_{\bf w}(B)$. In the rest of the paper, the boxing means the canonical boxing. 
***************************************}
\nin Let the minimal distance between two boxes $ B_1, B_2 $ be the infimum of the distance between all elements of each boxes and express this distance to be $ \dist_{\min}(B_1, B_2) $. \msk
\begin{defn}
The map $ F \in \II_B(\oeps) $ has the {\em bounded geometry} if
\begin{align*}
 \dist_{\min}(B^{n+1}_{{\bf w}v}, B^{n+1}_{{\bf w}c}) &\asymp \diam(B^{n+1}_{{\bf w}\nu}) \quad \text{for} \ \nu \in \{v, c\} \\[0.2em]
 \diam(B^n_{\bf w}) &\asymp \diam(B^{n+1}_{{\bf w}\nu}) \quad \text{for} \ \nu \in \{v, c\}
\end{align*}
for all $ {\bf w} \in W^n $ and for all $ n \geq 0 $. 
\end{defn}
\nin 
If $ F $ does not have bounded geometry, then we call $ \OO_F $ has {\em unbounded geometry}.
\msk

\subsection{Horizontal overlap of two adjacent boxes} \label{subsec-Horizontal overlap}
The proof of unbounded geometry of the Cantor set requires to compare the diameter of box and the minimal distance of two adjacent boxes. In order to compare these quantities, we would use the maps, $ \Psi^n_k(w) $ and $ F_k(w) $ with the two points $ w_1 = (x_1, y_1, \Bz_1) $ and $ w_2 = (x_2, y_2, \Bz_2) $ in $ F_n(B) $. Recall that $ \Bz_j = (z^1_j, z_j^2, \ldots , z_j^m) $ for $ j = 1,2 $. Let us each successive image of $ w_j $ under $ \Psi^n_k(w) $ and $ F_k(w) $ be $ \dot{w}_j $, $ \ddot{w}_j $ and $ \dddot{w}_j $ for $ j=1,2 $.
\begin{displaymath}
\xymatrix  
{   { w_j}  \ar@{|->}[r]^ {\Psi^n_k} & \dot{w}_j  \ar@{|->}[r]^{F_k}  &\ddot{w}_j \ar@{|->}[r]^{ \Psi^k_0}     & \dddot{w}_j
   }
\end{displaymath}

\nin For example, $ \dot{w}_j = \Psi^n_k(w_j) $ and $ \dot{w}_j = (\dot x_j, \dot y_j, \dot \Bz_j ) $ for  $ j=1,2 $. \ssk Let $ S_1 $ and $ S_2 $ be the (path) connected set on $ \R^{m+2} $. If $ \pi_x(\overline{S_1}) \cap \pi_x(\overline{S_2}) $ contains at least two points, then this intersection is called the {\em $ x- $axis overlap} or {\em horizontal overlap} of $ S_1 $ and $ S_2 $. Moreover, we say $ S_1 $ {\em overlaps} $ S_2 $ on the $ x- $axis or horizontally. Recall $ \si $ is the linear scaling of $ F_* $, the fixed point of the renormalization operator and $ \si_k = \si ( 1 + O(\rho^k)) $ for each $ k \in \N $.
\ssk \\
Recall the map $ \Psi^n_k $ from $ B(R^nF) $ to $ B^{n}_{{\bf v}}(R^kF) $ where $ {\bf v} = v^{n-k} \in W^{n-k} $
\msk
\begin{equation*}
\begin{aligned}
\Psi^n_k(w) = 
\begin{pmatrix}
1 & t_{n,\,k} & \Bu_{n,\,k} \\[0.2em]
& 1 & \\
& \Bd_{n,\,k} & \Id_{m \times m}
\end{pmatrix}
\begin{pmatrix}
\alpha_{n,\,k} & & \\
& \si_{n,\,k} & \\
& & \si_{n,\,k} \cdot \Id_{m \times m}
\end{pmatrix}
\begin{pmatrix}
x + S^n_k(w) \\
y \\[0.2em]
\Bz + {\bf R}_{n,\,k}(y)     
\end{pmatrix} 
\end{aligned} \msk
\end{equation*}
where $ \alpha_{n,\,k} = \si^{2(n-k)}(1 + O(\rho^k)) $ and $ \si_{n,\,k} = (-\si)^{n-k}(1 + O(\rho^k)) $. Thus for any $ w \in B(R^nF) $ we have the following equation
\begin{equation*}
\pi_x \circ \Psi^n_k(w) = \alpha_{n,\,k} (x + S^n_k(w)) + \si_{n,\,k} \big( t_{n,\,k}\,y + \Bu_{n,\,k} \cdot (\Bz + {\bf R}_{n,\,k}(y)) \big) .
\end{equation*}

\nin Let us find the sufficient condition of the horizontal overlapping. Horizontal overlapping means that there exist two points $ w_1 \in B^1_v(R^nF) $ and $ w_2 \in B^1_c(R^nF) $ satisfying the equation
\begin{equation*}
\pi_x \circ \Psi^n_k(w_1) - \pi_x \circ \Psi^n_k(w_2) = 0 .
\end{equation*}
Equivalently,
\begin{equation} \label{equation of x-axis overlap 1}
\begin{aligned}
&\ \alpha_{n,\,k} \Big[ \big(x_1 + S^n_k(w_1) \big) - \big(x_2 + S^n_k(w_2)\big) \Big] \\
& \qquad 
+ \si_{n,\,k} \Big[\, t_{n,\,k}(y_1 - y_2) + \Bu_{n,\,k} \cdot \big \{ \;\!\Bz_1 - \Bz_2 + {\bf R}_{n,\,k}(y_1) - {\bf R}_{n,\,k}(y_2) \big \} \Big] =0 .
\end{aligned} \msk
\end{equation} 

\nin Recall that $ x + S^n_k(w) = v_*(x) + O( \bar \eps^{2^k} + \rho^{n-k}) $ for some $ 0 < \rho < 1 $. 
Since the universal map $ v_*(x) $ is a diffeomorphism and $ | \:\! x_1 - x_2 | = O(1) $, we have the estimation by mean value theorem
$$ \big|\, x_1 + S^n_k(w_1) - \big(x_2 + S^n_k(w_2)\big) \big| = O(1) . $$
Recall that $ \tau_i $ is the tip of $ F_i $ for $ i \in \N $.
\msk
\begin{prop} \label{prop-estimation of t-n-k by b-1}
Let $ F \in {\NN} \cap \II_B(\bar \eps) $. Let $ b_1 = b/b_{\Bz} $ where $ b $ is the average Jacobian of $ F $ and $ b_{\Bz} $ is the universal number defined in Proposition \ref{prop-universal number b-Z}. Then 
$$ b_1^{2^k} \asymp t_{n,\,k} $$
for $ n > k + A $ where $ A $ is a uniform constant depending only on $ b_1 $ and $ \oeps $.
\end{prop}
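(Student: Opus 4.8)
The plan is to reduce, through the recursive formula of Proposition~\ref{recursive formula of d, u, and t}, the coefficient $t_{n,\,k}$ to its leading single--step contribution $t_{k+1,\,k}$, and then to recognise $t_{k+1,\,k}$ as a bounded multiple of the renormalisation--covariant combination $\di_y\eps_k-E_k Z_k^{-1}Y_k$ evaluated at the tip, which by \eqref{eq-universal expression with the number b-1} equals $b_1^{2^k}\,a(\pi_x(\tau_{F_k}))(1+O(\rho^k))$ and hence is $\asymp b_1^{2^k}$. First I would write, from Proposition~\ref{recursive formula of d, u, and t},
\[
 t_{n,\,k}=\sum_{i=k}^{n-1}\si^{\,i-k}\bigl[\,t_{i+1,\,i}+\Bu_{i+1,\,i}\cdot\Bd_{n,\,i+1}\,\bigr](1+O(\rho^k)),
\]
and invoke the size estimates recorded after Corollary~\ref{distortion parts of z-coordinate of Psi} --- $|t_{i+1,\,i}|,\ \|\Bu_{i+1,\,i}\|,\ \|\Bd_{n,\,i}\|=O(\oeps^{2^i})$, together with $0<\si<1$ --- to see that the $i=k$ term dominates: the remaining terms sum to $O(\si\,\oeps^{2^{k+1}})$ and the cross term $\Bu_{k+1,\,k}\cdot\Bd_{n,\,k+1}$ is $O(\oeps^{3\cdot 2^k})$. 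Thus $t_{n,\,k}=t_{k+1,\,k}(1+O(\rho^k))+O(\oeps^{2^{k+1}})$, so it suffices to prove $t_{k+1,\,k}\asymp b_1^{2^k}$ with $b_1^{2^k}$ dominating every residual that has been discarded; the constant $A=A(b_1,\oeps)$ is fixed at the end precisely so that for $n>k+A$ the $\si^{n-k}$-- and $\oeps$--power errors lie below a fixed small multiple of $b_1^{2^k}$.

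Next I would analyse $t_{k+1,\,k}$, that is, the entry $t_k$ of $D^{k+1}_k$. From the computation of $(D^{k+1}_k)^{-1}$ out of the horizontal--like diffeomorphism $H_k$ in the proof of Proposition~\ref{recursive formula of d, u, and t} one has $\di_y\eps_k(\tau_{F_k})\asymp -t_{k+1,\,k}+\Bu_{k+1,\,k}\cdot\Bd_{k+1,\,k}$, together with $\Bu^{\,j}_{k+1,\,k}\asymp\di_{z_j}\eps_k(\tau_{F_{k+1}})$ and $\|\Bd_{k+1,\,k}\|\asymp\|\Bq_k(\pi_y(\tau_{F_{k+1}}))\|$. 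So, up to $(1+o(1))$ factors,
\[
 t_{k+1,\,k}\ \asymp\ \di_y\eps_k(\tau_{F_k})+E_k(\tau_{F_k})\cdot\Bq_k\bigl(\pi_y(\tau_{F_{k+1}})\bigr),\qquad E_k=(\di_{z_1}\eps_k,\dots,\di_{z_m}\eps_k).
\]
Now I would use the tip identity obtained by combining Lemma~\ref{lem-relation between Y-n and Z-n with Y-k and Z-k} --- which gives $\bigl(Z_k(\tau_{F_k})\bigr)^{-1}Y_k(\tau_{F_k})=-\lim_{n\to\infty}\sum_{i=k}^{n-1}\Bq_i(\pi_y(\tau_{F_i}))$ --- with Corollary~\ref{distortion parts of z-coordinate of Psi}, which identifies that limit with $\Bd_{*,\,k}$; hence $\bigl(Z_k(\tau_{F_k})\bigr)^{-1}Y_k(\tau_{F_k})=-\Bd_{*,\,k}$ and $\Bd_{*,\,k}=\Bq_k(\pi_y(\tau_{F_{k+1}}))+O(\oeps^{2^{k+1}})$. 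Substituting turns the right--hand side above into $\di_y\eps_k(\tau_{F_k})-E_k(\tau_{F_k})\bigl(Z_k(\tau_{F_k})\bigr)^{-1}Y_k(\tau_{F_k})$, which by \eqref{eq-universal expression with the number b-1} equals $b_1^{2^k}\,a(\pi_x(\tau_{F_k}))(1+O(\rho^k))$. Since $a$ is a universal positive function and $\pi_x(\tau_{F_k})$ stays in a fixed compact set, $a(\pi_x(\tau_{F_k}))\asymp 1$, so $t_{k+1,\,k}\asymp b_1^{2^k}$; combined with the reduction of the first paragraph this yields $t_{n,\,k}\asymp b_1^{2^k}$ for $n>k+A$.

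The hard part will be the error bookkeeping in the middle step. Each ``$\asymp$'' inherited from the proof of Proposition~\ref{recursive formula of d, u, and t} must be upgraded to a genuine $(1+o(1))$ identity, and one must control the discrepancy between the raw quantity $\di_y\eps_k(\tau_{F_k})$ and the covariant combination $\di_y\eps_k-E_kZ_k^{-1}Y_k$ --- equivalently, $\Bq_k(\pi_y(\tau_{F_{k+1}}))$ versus $\Bd_{*,\,k}$, the replacement of $\tau_{F_{k+1}}$ by $\tau_{F_k}$ in the argument, and the nonlinear parts $s_k,\ \Br_k$ and $\BR_{n,\,k}$ occurring in the decomposition of $\Psi^n_k$ (the last one bounded by $O(\si^{n-k}\oeps^{2^k})$ by Proposition~\ref{exponential smallness of R-n-k}). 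The structural fact that keeps these residuals harmless is membership in $\NN$: the defining relation $Y\circ F+Z\circ F\cdot X=\B0$ is exactly what forces $\bigl(Z_k(\tau_{F_k})\bigr)^{-1}Y_k(\tau_{F_k})=-\Bd_{*,\,k}$ to hold \emph{exactly}, and it is this exact cancellation that pins $t_{k+1,\,k}$ to the scale $b_1^{2^k}$ rather than the a priori larger scale $\oeps^{2^k}$. Quantifying these residuals uniformly in $k$ is what fixes $A=A(b_1,\oeps)$: it must be chosen large enough that $\oeps^{2^A}<b_1$, so that for $n>k+A$ the $\oeps^{2^{k+A}}$--scale tail errors are dominated by $b_1^{2^k}$.
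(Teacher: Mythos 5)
Your overall architecture matches the paper's: both arguments run the recursive formula of Proposition~\ref{recursive formula of d, u, and t} for $t_{n,\,k}$, identify the one-step data $t_{k+1,\,k},\ \Bu_{k+1,\,k},\ \Bd_{k+1,\,k}$ with $\di_y\eps_k(\tau_k),\ E_k(\tau_k),\ \Bq_k(\pi_y(\tau_k))$ via $D(\Psi^{k+1}_k)^{-1}$, and use the $\NN$-cancellation $(Z_n)^{-1}Y_n=-X_n(c_{F_n})$ together with \eqref{eq-universal expression with the number b-1} to pin the covariant combination to $b_1^{2^k}a(\pi_x(\tau_k))(1+O(\rho^k))$. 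However, there is a genuine gap in your reduction step. You truncate the sum $\sum_{i=k}^{n-1}\si^{i-k}[\,t_{i+1,\,i}+\Bu_{i+1,\,i}\cdot\Bd_{n,\,i+1}\,]$ to its $i=k$ term and bound the tail by $O(\si\,\oeps^{2^{k+1}})$, and you likewise discard the cross term $\Bu_{k+1,\,k}\cdot\Bd_{n,\,k+1}=O(\oeps^{3\cdot 2^k})$. These discarded quantities are dominated by $b_1^{2^k}$ only when $b_1\gtrsim\oeps^{2}$. In the regime $b_1<\oeps^{2}$ one has $b_1^{2^k}<\oeps^{2^{k+1}}$, so the tail you threw away can swamp the quantity you are trying to isolate; and this regime is not vacuous — it is precisely the case the constant $A$ is introduced for, and the case relevant to the small-$b_1$ interval $[0,b_\bullet]$ in Theorem~\ref{unbounded geometry with b-1}. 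Your closing remark that choosing $A$ with $\oeps^{2^A}<b_1$ repairs this does not work: the tail of the sum has its leading term at $i=k+1$, of size $\si\,\oeps^{2^{k+1}}$ \emph{independently of $n$}, so enlarging $n-k$ does not shrink it. (The constant $A$ in the paper controls a different error, namely $E_k(\tau_k)\cdot(Z_n(\tau_n))^{-1}Y_n(\tau_n)=O(\oeps^{2^k}\oeps^{2^n})$, which genuinely decays in $n$.)

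The paper's remedy, which you should adopt, is not to truncate at all but to apply the same cancellation at \emph{every} level: for each $i$ one shows
\begin{equation*}
t_{i+1,\,i}+\Bu_{i+1,\,i}\cdot\Bd_{n,\,i+1}
=\frac{\alpha_{i+1,\,i}}{\si_{i+1,\,i}}\Big[\,\di_y\eps_i(\tau_i)+E_i(\tau_i)\cdot\sum_{j=i}^{n-1}\Bq_j\circ(\pi_y(\tau_j))\,\Big]
= b_1^{2^i}\,\si\, a(\pi_x(\tau_i))(1+O(\rho^i)),
\end{equation*}
so the full sum is $\sum_{i=k}^{n-1}\si^{i-k}\,O(b_1^{2^i})$, whose tail is $O(\si\,b_1^{2^{k+1}})=O(\si\,(b_1^{2^k})^2)\ll b_1^{2^k}$ regardless of how $b_1$ compares with $\oeps$. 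Note also that it is the \emph{combination} $t_{k+1,\,k}+\Bu_{k+1,\,k}\cdot\Bd_{n,\,k+1}$, not $t_{k+1,\,k}$ alone, that exactly reproduces the covariant expression $\di_y\eps_k+E_k\sum\Bq_i$; isolating $t_{k+1,\,k}$ by itself, as you do, reintroduces the $O(\oeps^{3\cdot 2^k})$ discrepancy between $\Bq_k(\pi_y(\tau_k))$ and the full sum $\sum_{i\geq k}\Bq_i(\pi_y(\tau_i))$, which again is only harmless for $b_1$ not too small. With these two corrections your argument coincides with the paper's proof.
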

\begin{proof}
Plugging $ \tau_n $ into the equation \eqref{eq-b1-2-k with partial derivatives of de-k} in Lemma \ref{lem-di-y eps and q asymptotic from n to k}. Then \msk
\begin{equation} \label{eq-asymptotic of di-y eps-k with b-1}
\begin{aligned}
\di_y \eps_k(\tau_k) + E_k(\tau_k) \cdot \sum_{i=k}^{n-1} \; \Bq_i \circ \big( \pi_y (\tau_i) \big) - E_k(\tau_k) \cdot (Z_n(\tau_n))^{-1} \cdot Y_n(\tau_n)= b_1^{2^k} a(\pi_x(\tau_k))(1 + O(\rho^k)) 
\end{aligned}
\end{equation}
where $ x \mapsto a(x) $ is the universal map. Recall that $ H_k^{-1} \circ \La_k^{-1} = \Psi^{k+1}_k $. Since $ F \in \NN $, we have \msk
\begin{equation*}
\begin{aligned}
(Z_n(\tau_n))^{-1} \cdot Y_n(\tau_n) + X_n(c_{F_n}) 
& = \ (Z_n(\tau_n))^{-1} \cdot \Big[\,Y_n(\tau_n) + Z_n(\tau_n) \cdot X_n(c_{F_n}) \,\Big] \\[0.3em]
& = \ \B0
\end{aligned} \ssk
\end{equation*}
Thus $ \| \:\! E_k(\tau_k) \| \cdot \| \:\! (Z_n(\tau_n))^{-1} \cdot Y_n(\tau_n) \| \leq \| \:\! E_k(\tau_k) \| \cdot \| \:\! X_n(c_{F_n}) \| = O(\oeps^{2^k}\oeps^{2^n}) $. \ssk Let us find the sufficient condition satisfying $ \bar \eps^{2^k} \bar \eps^{2^n} \lesssim b_1^{2^k} $. If $ b_1 \geq \bar \eps^2 $, then $ \bar \eps^{2^k} \bar \eps^{2^n} \leq b_1^{2^k} $ for $ n > k $. Assume that $ b_1 < \bar \eps^2 \ll 1 $. Thus
\begin{equation*}
\begin{aligned}
\bar \eps^{2^n} \bar \eps^{2^k} \lesssim  b_1^{2^k} &\Longleftrightarrow \ ( 2^n + 2^k) \log \bar \eps \ \lesssim \ 2^k \log b_1 \\[0.3em]
&\Longleftrightarrow \ 2^n \geq 2^k \left( \frac{\log b_1}{ \log \bar \eps} - 1 \right) + C_0
\end{aligned}
\end{equation*}
for some positive $ C_0>0 $. 
Define $ A $ as follows \msk
\begin{equation}
\begin{aligned}
A = \left\{
\begin{array}{cll}
0 & \quad \text{where} & b_1 \geq \bar \eps^2 \\[0.6em]
C_1\, \log_2 \! \left( \dfrac{\log b_1}{ \log \bar \eps} - 1 \right) & \quad \text{where} & b_1 < \bar \eps^2 
\end{array} \right.
\end{aligned} \msk
\end{equation}
Thus if $ n \geq k + A $, then $ \bar \eps^{2^k} \bar \eps^{2^k} \lesssim b_1^{2^k} $. \ssk Let us compare each components of the derivatives of $ D(\Psi^{k+1}_k)^{-1}(\tau_k) $ and $ (D^{k+1}_k)^{-1} = D(\La_k \circ H_k)(\tau_k) $. Then comparison of each elements of the matrices shows that
\msk
\begin{equation}
\begin{aligned}
t_{k+1,\,k} - \Bu_{k+1,\,k}\cdot  \Bd_{k+1,\,k} & \ = \ \frac{\alpha_{k+1,\,k}}{\si_{k+1,\,k}} \cdot \di_y \eps_k(\tau_k) \\[0.2em]
\Bu_{k+1,\,k} & \ = \ \frac{\alpha_{k+1,\,k}}{\si_{k+1,\,k}} \cdot E_k(\tau_k), \qquad 
\Bd_{i+1,\,i} \ = \ \Bq_i \circ \big( \pi_y (\tau_i) \big)  .
\end{aligned} \msk
\end{equation}
The equation, $ \displaystyle{\Bd_{n,\,k+1} = \sum_{i=k+1}^{n-1} \Bd_{i+1,\,i}} $ holds by Proposition \ref{recursive formula of d, u, and t}. Then
\begin{equation}
\begin{aligned}
t_{k+1,\,k} + \Bu_{k+1,\,k}\cdot \Bd_{n,\,k+1} &= \ t_{k+1,\,k} - \Bu_{k+1,\,k}\cdot \Bd_{k+1,\,k} + \Bu_{k+1,\,k}\cdot \Bd_{n,\;k} \\[0.2em]
&= \ t_{k+1,\,k} - \Bu_{k+1,\,k}\cdot \Bd_{k+1,\,k} + \Bu_{k+1,\,k} \sum_{i=k}^{n-1} \Bd_{i+1,\,i} \\[-0.4em]
&= \ \frac{\alpha_{k+1,\,k}}{\si_{k+1,\,k}} \,\Big[\, \di_y \eps_k(\tau_k) + E_k(\tau_k) \cdot \sum_{i=k}^{n-1} \; \Bq_i \circ \big( \pi_y (\tau_i) \big) \,\Big] \\[0.2em]
&= \ b_1^{2^k} \cdot \si \cdot a(\pi_x(\tau_k))(1 + O(\rho^k)) .
\end{aligned} \msk
\end{equation}
Hence, by Proposition \ref{recursive formula of d, u, and t} again, $ t_{n,\,k} $ is as follows
\begin{equation*}
\begin{aligned}
t_{n,\,k} & \asymp \sum_{i=k}^{n-1} \si^{i-k}\,\big[\,t_{i+1,\,i} + \Bu_{i+1,\,i} \cdot \Bd_{n,\,i+1} \,\big](1 + O(\rho^k)) \\[0.2em]
& \asymp t_{k+1,\,k} + \Bu_{k+1,\,k} \cdot \Bd_{n,\,k+1} \\[0.4em]
& \asymp b_1^{2^k} .
\end{aligned}
\end{equation*}

\end{proof}

\msk
\nin Let us choose two points $ w'_1 $ and $ w'_2 $ in $ F_n(B) $. In particular, we may assume that 
$ w_j \in \OO_{R^nF} $ where $ \pi_z(w_j) =z_j $ for $ j=1,2 $. 
Let $ w'_1 $ and $ w'_2 $ be the pre-image of $ w_1 $ and $ w_2 $ respectively. Then 
\begin{equation} \label{z distance of two points}
\begin{aligned}
| \:\! z_1^j -z_2^j | = | \,\de_n^j(w_1') - \de_n^j(w_2') | \leq  C_j\:\!\| D \de_n^j \| \cdot \| \;\! w'_1 - w'_2 \| = O(\,\bar \eps^{2^n})
\end{aligned} \msk
\end{equation}
for some $ C_j>0 $ and for $ 1 \leq j \leq m $. Thus $ \| \;\! \Bz_1 - \Bz_2 \| = O(\bar \eps^{2^n}) $.
\msk
\begin{cor} \label{cor-comparison sigma n-k with b-1}
Let $ F \in \NN \cap \II_B(\bar \eps) $. \ssk Suppose that $ \Psi^n_k(B^{n+1}_v) $ overlaps $ \Psi^n_k(B^{n+1}_c) $ on $ x- $axis. In particular, \ssk $ \pi_x \circ \Psi^n_k(B^{n+1}_v) \cap \pi_x \circ \Psi^n_k(B^{n+1}_c) $ contains two points $ \Psi^n_k(w_1) $ and $ \Psi^n_k(w_2) $ where $ w_1 \in B^{n+1}_v \cap \OO_{R^nF} $ and $ w_2 \in B^{n+1}_c \cap \OO_{R^nF} $. 
Then 
$$ \si^{n-k} \asymp b_1^{2^k}  $$
for every big enough $ k $.
\end{cor}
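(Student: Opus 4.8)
The starting point is the horizontal-overlap identity \eqref{equation of x-axis overlap 1}. By hypothesis there are points $w_1 = (x_1,y_1,\Bz_1) \in B^{n+1}_v \cap \OO_{R^nF}$ and $w_2 = (x_2,y_2,\Bz_2) \in B^{n+1}_c \cap \OO_{R^nF}$ with $\pi_x \circ \Psi^n_k(w_1) = \pi_x \circ \Psi^n_k(w_2)$, so \eqref{equation of x-axis overlap 1} holds for these $w_1, w_2$. The plan is to read off the magnitude of each of the two bracketed expressions and then balance the two sides. For the first bracket, recall $x + S^n_k(w) = v_*(x) + O(\oeps^{2^k} + \rho^{n-k})$ with $v_*$ a diffeomorphism; since $B^{n+1}_v$ and $B^{n+1}_c$ project onto the $x$-axis to intervals lying (for $R^nF$ close to $F_*$, uniformly in $n$) a definite distance apart -- one around the critical point of $f_n$, the other around its critical value -- one gets $|x_1 - x_2| \asymp 1$, hence the first bracket is $\asymp 1$ once $k$ is large enough that the $O(\oeps^{2^k} + \rho^{n-k})$ term is negligible.

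For the error term $\Bu_{n,\,k} \cdot \{\Bz_1 - \Bz_2 + {\bf R}_{n,\,k}(y_1) - {\bf R}_{n,\,k}(y_2)\}$ I would use the collected estimates: $\|\Bu_{n,\,k}\| = O(\oeps^{2^k})$, $\|\Bz_1 - \Bz_2\| = O(\oeps^{2^n})$ by \eqref{z distance of two points}, and $\|{\bf R}_{n,\,k}\|, \|({\bf R}_{n,\,k})'\| = O(\si^{n-k}\oeps^{2^k})$ by Proposition \ref{exponential smallness of R-n-k} (so ${\bf R}_{n,\,k}(y_1) - {\bf R}_{n,\,k}(y_2) = O(\si^{n-k}\oeps^{2^k})$ by the mean value theorem). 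These combine to bound the whole term by $O(\si^{n-k}\oeps^{2^{k+1}} + \oeps^{2^k + 2^n})$, and because $2^n$ grows doubly exponentially while $\si^{n-k}$ only decays singly exponentially, this is $o(\si^{n-k})$ uniformly in $n$ once $k$ is large.

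Next I would insert $\alpha_{n,\,k} = \si^{2(n-k)}(1 + O(\rho^k))$ and $\si_{n,\,k} = (-\si)^{n-k}(1 + O(\rho^k))$ and divide \eqref{equation of x-axis overlap 1} by $\si_{n,\,k}$. Since $|\alpha_{n,\,k}/\si_{n,\,k}| \asymp \si^{n-k}$ and the first bracket is $\asymp 1$, and since the error term is only $o(\si^{n-k})$, the identity yields $|t_{n,\,k}(y_1 - y_2)| \asymp \si^{n-k}$. It remains to strip off the factor $y_1 - y_2$. Here I would use $\pi_y \circ F_n = \pi_x$ together with the $F_n$-invariance of $\OO_{R^nF}$: this gives $\pi_y(B^{n+1}_c) = \pi_x(B^{n+1}_v)$, whereas $\pi_y(B^{n+1}_v) = \si_n \,\pi_y(B(R^{n+1}F))$, and for $R^nF$ near $F_*$ these two intervals are again a definite distance apart, so $|y_1 - y_2| \asymp 1$. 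Hence $|t_{n,\,k}| \asymp \si^{n-k}$.

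Finally, combining $|t_{n,\,k}| \asymp \si^{n-k}$ with the a priori bound $|t_{n,\,k}| = O(\oeps^{2^k})$ (read off from the series for $t_{n,\,k}$ in Proposition \ref{recursive formula of d, u, and t} together with estimate (1)) forces $\si^{n-k} \lesssim \oeps^{2^k}$, i.e. $n - k \gtrsim 2^k$; in particular $n > k + A$ once $k$ is large, so Proposition \ref{prop-estimation of t-n-k by b-1} applies and gives $t_{n,\,k} \asymp b_1^{2^k}$. Chaining the two equivalences yields $\si^{n-k} \asymp b_1^{2^k}$. The step I expect to be the main obstacle is the geometric input used twice above: showing that the $x$- and $y$-projections of $B^{n+1}_v$ and $B^{n+1}_c$ stay a uniform distance apart, independently of $n$ -- this is what upgrades both brackets from ``bounded'' to ``$\asymp 1$'', and it rests on the exponential convergence $R^nF \to F_*$. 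A secondary point requiring care is verifying that every error term is genuinely $o(\si^{n-k})$, not merely small, so that the concluding $\asymp$ is not circular.
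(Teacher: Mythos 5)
Your argument is correct and rests on exactly the same computation as the paper's proof: expand $\pi_x\circ\Psi^n_k(w_1)-\pi_x\circ\Psi^n_k(w_2)$, observe that the $\alpha_{n,\,k}$--part is $\asymp\si^{2(n-k)}$ because $v_*$ is a diffeomorphism and $|x_1-x_2|\asymp 1$, control the $\Bu_{n,\,k}\cdot\{\cdots\}$ term with the collected estimates of Section \ref{sec-recursive formula of Psi-n-k}, and balance against the $t_{n,\,k}$--term using Proposition \ref{prop-estimation of t-n-k by b-1}. The one genuine difference is the order in which that proposition is invoked. The paper substitutes $t_{n,\,k}\asymp b_1^{2^k}$ at the outset, which requires $n\ge k+A$; it simply \emph{takes} such $n$ and never checks that the overlap hypothesis is compatible with this restriction. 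You instead first extract $|t_{n,\,k}(y_1-y_2)|\asymp\si^{n-k}$ directly from the vanishing of $\dot x_1-\dot x_2$ (all other terms being $\asymp\si^{2(n-k)}$ or $o(\si^{n-k})$), strip off $|y_1-y_2|\asymp 1$, and then combine with the a priori bound $|t_{n,\,k}|=O(\oeps^{2^k})$ from Proposition \ref{recursive formula of d, u, and t} to force $\si^{n-k}\lesssim\oeps^{2^k}$, hence $n-k\gtrsim 2^k>A$ for large $k$; only then does Proposition \ref{prop-estimation of t-n-k by b-1} enter. This reordering closes a small logical gap in the paper's version and shows that the overlap condition itself forces $n$ into the range where $t_{n,\,k}\asymp b_1^{2^k}$ holds. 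The two geometric inputs you flag as the main obstacle, $|x_1-x_2|\asymp1$ and $|y_1-y_2|\asymp1$ for points taken from $B^{n+1}_v$ and $B^{n+1}_c$, are indeed asserted without proof in the paper as well (they follow from the exponential convergence $R^nF\to F_*$ and the definite separation of the pieces of the degenerate limit map), so your treatment is no less complete than the original on this point.
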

\begin{proof}
Recall the following equation
\msk
\begin{equation*}
\begin{aligned}
\pi_x \circ \Psi^n_k(w) = \alpha_{n,\,k} \big[\,x + S^n_k(w)\,\big] + \si_{n,\,k} \big[\, t_{n,\,k}\,y + \Bu_{n,\,k} \cdot (\Bz + {\bf R}_{n,\,k}(y)) \,\big] .
\end{aligned} \msk
\end{equation*}
Recall 
$ x + S^n_k(w) = v_*(x) + O(\bar \eps^{2^k} + \rho^{n-k}) $ where $ v_*(x) $ is a diffeomorphism and
$$ |\, v_*(x_1) - v_*(x_2) | = |\,v_*'(\bar x)\cdot (x_1 - x_2) | \geq C_0 > 0  $$
where $ \bar x $ is in the line segment between $ x_1 $ and $ x_2 $. Thus 
\msk
\begin{equation*}
\begin{aligned}
 \dot x_1 - \dot x_2 
&= \ \alpha_{n,\,k} \big[ \big(x_1 + S^n_k(w_1) \big) - \big(x_2 + S^n_k(w_2)\big) \big] \\
& \qquad 
+ \si_{n,\,k} \big[\, t_{n,\,k}(y_1 - y_2) + \Bu_{n,\,k} \cdot \big \{ \Bz_1 - \Bz_2 + {\bf R}_{n,\,k}(y_1) - {\bf R}_{n,\,k}(y_2) \big \} \big] \\[0.3em]
&= \ \alpha_{n,\,k} \big[\,v_*'(\bar x)\cdot (x_1 - x_2) + O(\bar \eps^{2^k} + \rho^{n-k}) \big] \\
&\qquad + \si_{n,\,k} \big[\,t_{n,\,k}(y_1 - y_2) + \Bu_{n,\,k} \cdot \big \{ \Bz_1 - \Bz_2 + ({\bf R}_{n,\,k})'(\bar y)\cdot (y_1 - y_2) \big \} \big] .
\end{aligned} \bsk
\end{equation*}

\nin Then by Proposition \ref{prop-estimation of t-n-k by b-1} and the estimations in the end of Section \,\ref{sec-recursive formula of Psi-n-k}, we obtain that
\msk 
\begin{equation} \label{lower estimate of x dot distance}
\begin{aligned}
\big| \;\! \dot x_1 - \dot x_2 \big| &= \Big|\: C_3\:\! \si^{2(n-k)} + \si^{n-k} \,\big[\, C_4 \;\! b_1^{2^k} + C_5\;\! \bar \eps^{2^k} \big( \,\bar \eps^{2^n} + \si^{n-k} \bar \eps^{2^k} \big) \,\big] \:\!\Big| \\[0.2em]
&= \Big|\:\si^{2(n-k)}\,\big[\,C_3 + C_4\,\bar \eps^{2^{k+1}} \,\big] + \si^{n-k} \,\big[\, C_4 \;\! b_1^{2^k} + C_5\;\! \bar \eps^{2^k} \bar \eps^{2^n} \,\big] \:\!\Big| \\[0.2em]
& \leq C_5\,\si^{2(n-k)} + C_6\,\si^{n-k} \big[\,b_1^{2^k} + \bar \eps^{2^k} \bar \eps^{2^n} \,\big]
\end{aligned} \msk
\end{equation}
for some constants $ C_3 $, $ C_4 $, and $ C_5 $, which do not have to be positive. Let us take big enough $ n $ which satisfies $ n \geq k + A $ where $ A $ is the constant defined in Proposition \ref{prop-estimation of t-n-k by b-1}. Then $ b_1^{2^k} \gtrsim \bar \eps^{2^k} \bar \eps^{2^n} $ is satisfied. However, by the horizontal overlapping assumption $ \dot x_1 - \dot x_2 = 0 $ for some two points $ x_1 $ and $ x_2 $. Hence,
\begin{equation*}
\begin{aligned}
\si^{2(n-k)} \asymp \si^{n-k} \, b_1^{2^k} .
\end{aligned}
\end{equation*}

\end{proof}

\msk

\subsection{Unbounded geometry on the critical Cantor set}

\comm{************
Recall the coordinate change map at the tip, $ D_k = D\Psi_k(\tau_{F_k}) $. Then \ssk
\begin{align*}
D_k = 
\begin{pmatrix}
1 & t_k & u_k \\
& 1 & \\
& d_k & 1
\end{pmatrix}
 \cdot
\begin{pmatrix}
\alpha_k & & \\
& \si_k & \\
& & \si_k
\end{pmatrix} = DH_k^{-1}(\tau_{F_k}) 
\end{align*}

\msk
****************}

Let us assume that the $ x- $axis overlapping of two boxes, $ \Psi^n_{k,\,{\bf v}}(B^{n+1}_v) $ and $ \Psi^n_{k,\,{\bf v}}(B^{n+1}_c) $. Under this assumption, we can measure the upper bound the minimal distances of two adjacent boxes $ \dist_{\min}(B^{n}_{{\bf w}v}, B^{n}_{{\bf w}c}\,) $, which are the image of $ B^{n+1}_v $ and $ B^{n+1}_c $ under $ \Psi^k_{0,\,{\bf v}} \circ F_k \circ \Psi^n_{k,\,{\bf v}} $ respectively. Compare this minimal distance with the lower bound of the diameter of the one of the above boxes. Then Cantor attractor has the generic unbounded geometry. Moreover, this result is only related to the universal constant $ b_1 $ (Theorem \ref{unbounded geometry with b-1}).

\msk
\begin{lem} \label{upper bound the distance of boxes}
Let the H\'enon-like map $ F $ is in $ {\NN} \cap \II_B(\bar \eps) $. Suppose that $ B^{n}_{{\bf v}v}(R^kF) $ overlaps $ B^{n}_{{\bf v}c}(R^kF) $ 
on the $ x- $axis where the word $ {\bf v} = v^{n-k} \in W^{n-k} $. Then 
$$ \dist_{\min}(B^{n}_{{\bf w}v}, B^{n}_{{\bf w}c}\,) \leq C \big[\, \si^{2k} \si^{n-k} b_1^{2^k} + \si^{2k} \si^{2(n-k)} \bar \eps^{2^k} \,\big] $$
where $ {\bf w} = v^kc\,v^{n-k-1} \in W^n $ for some $ C>0 $ and sufficiently big $ k $ and $ n \geq k + A $.
\end{lem}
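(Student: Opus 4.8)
The plan is to bound $\dist_{\min}(B^n_{{\bf w}v}, B^n_{{\bf w}c})$ from above by the distance between two concrete points, one in each box, obtained by pushing a single pair of Cantor set points through the composition $\Psi^k_{0,\,{\bf v}} \circ F_k \circ \Psi^n_{k,\,{\bf v}}$ carrying $B^{n+1}_v$ and $B^{n+1}_c$ to $B^n_{{\bf w}v}$ and $B^n_{{\bf w}c}$. By the horizontal overlap hypothesis there are points $w_1 \in B^{n+1}_v \cap \OO_{R^nF}$ and $w_2 \in B^{n+1}_c \cap \OO_{R^nF}$ with $\pi_x \circ \Psi^n_{k,\,{\bf v}}(w_1) = \pi_x \circ \Psi^n_{k,\,{\bf v}}(w_2)$; write $w_j = (x_j,y_j,\Bz_j)$, $\dot w_j = \Psi^n_{k,\,{\bf v}}(w_j)$, $\ddot w_j = F_k(\dot w_j)$ and $\dddot w_j = \Psi^k_{0,\,{\bf v}}(\ddot w_j)$ for $j=1,2$. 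Since $w_1, w_2 \in \OO_{R^nF}$, I may use the bound $\|\Bz_1 - \Bz_2\| = O(\bar\eps^{2^n})$ from \eqref{z distance of two points}, together with $|y_1 - y_2| = O(1)$.

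First I would estimate $\ddot w_1 - \ddot w_2$. As $F_k$ is H\'enon-like and the first coordinates of $\dot w_1$ and $\dot w_2$ agree, the second coordinates of $\ddot w_1$ and $\ddot w_2$ agree, the first-coordinate difference equals $-\big(\eps_k(\dot w_1) - \eps_k(\dot w_2)\big)$, and the $\Bz$-difference equals $\bde_k(\dot w_1) - \bde_k(\dot w_2)$. Since $\dot w_1 - \dot w_2 = (0,\, \dot y_1 - \dot y_2,\, \dot\Bz_1 - \dot\Bz_2)$, Proposition \ref{formal expression of pi-z Psi difference} gives $\dot y_1 - \dot y_2 = \si_{n,\,k}(y_1 - y_2)$ and $\dot\Bz_1 - \dot\Bz_2 = \si_{n,\,k}(\Bz_1 - \Bz_2) + \si_{n,\,k}\sum_{i=k}^{n-1}\Bq_i\circ(\si_{n,\,i}\bar y)\,(y_1 - y_2)$. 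Applying the mean value theorem to $\eps_k$ and to $\bde_k$ along the segment $[\dot w_1, \dot w_2]$, which has constant first coordinate, and collecting the coefficient of $y_1 - y_2$, I obtain factors of the form $\di_y\eps_k + E_k\cdot\sum_{i=k}^{n-1}\Bq_i$ and $Y_k + Z_k\cdot\sum_{i=k}^{n-1}\Bq_i$ evaluated at intermediate points. Lemma \ref{lem-di-y eps and q asymptotic from n to k} bounds the former by $C_0 b_1^{2^k} + C_1\bar\eps^{2^k}\si^{n-k}$ and Corollary \ref{cor-Y-k Z-k and q estimation} bounds the latter by $C\bar\eps^{2^k}\si^{n-k}$; with $\|E_k\|,\|Z_k\| = O(\bar\eps^{2^k})$ and the absorption of the super-exponentially small $\bar\eps^{2^k}\bar\eps^{2^n}$ this yields
\[
|\eps_k(\dot w_1) - \eps_k(\dot w_2)| = O\big(\si^{n-k}b_1^{2^k} + \si^{2(n-k)}\bar\eps^{2^k}\big), \qquad \|\bde_k(\dot w_1) - \bde_k(\dot w_2)\| = O\big(\si^{2(n-k)}\bar\eps^{2^k}\big).
\]
Thus $\ddot w_1 - \ddot w_2$ has vanishing second coordinate, first coordinate $O(\si^{n-k}b_1^{2^k} + \si^{2(n-k)}\bar\eps^{2^k})$, and $\Bz$-coordinate $O(\si^{2(n-k)}\bar\eps^{2^k})$.

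Next I would push $\ddot w_1 - \ddot w_2$ through $\Psi^k_{0,\,{\bf v}}$, using (modulo a harmless translation) the factorization of $\Psi^k_0$ recalled in Section \ref{sec-recursive formula of Psi-n-k}: the nonlinear map $(x + S^k_0(w),\, y,\, \Bz + {\bf R}_{k,\,0}(y))$, then the diagonal scaling $\mathrm{diag}(\alpha_{k,\,0},\, \si_{k,\,0},\, \si_{k,\,0}\,\Id)$ with $\alpha_{k,\,0} \asymp \si^{2k}$ and $\si_{k,\,0}\asymp\si^k$, then the unipotent factor with entries $t_{k,\,0}$, $\Bu_{k,\,0}$, $\Bd_{k,\,0}$. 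Since the second coordinates of $\ddot w_1$ and $\ddot w_2$ agree, the ${\bf R}_{k,\,0}$ terms cancel and the $t_{k,\,0}$-entry acts on a zero; since $x + S^k_0(w) = v_*(x) + O(\bar\eps + \rho^k)$ with $v_*$ a diffeomorphism, the nonlinear map changes the first-coordinate difference by an $O(1)$ factor, with only $O(\bar\eps)$-mixing from the $\Bz$-difference, and leaves the $\Bz$-difference of the same order; the diagonal scaling multiplies the first-coordinate difference by $\alpha_{k,\,0}\asymp\si^{2k}$ and the $\Bz$-difference by $\si_{k,\,0}\asymp\si^k$; and the unipotent factor adds only the cross term $\si_{k,\,0}\,\Bu_{k,\,0}\cdot(\,\cdot\,)$ of size $O(\si^k\bar\eps\cdot\si^{2(n-k)}\bar\eps^{2^k})$, which is absorbed into $\si^{2k}\si^{2(n-k)}\bar\eps^{2^k}$ once $\si^{n-k}\asymp b_1^{2^k}$ (Corollary \ref{cor-comparison sigma n-k with b-1}), $k$ is large and $n \geq k+A$. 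Collecting these estimates,
\[
\dist_{\min}(B^n_{{\bf w}v}, B^n_{{\bf w}c}) \leq \|\dddot w_1 - \dddot w_2\| \leq C\big[\,\si^{2k}\si^{n-k}b_1^{2^k} + \si^{2k}\si^{2(n-k)}\bar\eps^{2^k}\,\big].
\]

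I expect the main obstacle to be the bookkeeping of base points and of which error term lands in which of the two terms of the final bound. The estimates of Lemma \ref{lem-di-y eps and q asymptotic from n to k} and Corollary \ref{cor-Y-k Z-k and q estimation} are stated at points of the form $\Psi^n_{k,\,{\bf v}}(w)$, whereas the mean value theorem produces derivatives at midpoints of segments inside $\Psi^n_{k,\,{\bf v}}(B(R^nF))$; one must appeal to the uniformity of those bounds over that box, or replace a midpoint by a nearby point of the form $\Psi^n_{k,\,{\bf v}}(w)$ at the cost of adjusting constants. One must also observe that the arguments $\si_{n,\,i}\bar y$ appearing in Proposition \ref{formal expression of pi-z Psi difference} coincide with $\pi_y\circ\Psi^n_{i,\,{\bf v}}$ of a common point, which is automatic because $\pi_y\circ\Psi^n_{i,\,{\bf v}}$ depends only on the second coordinate. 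The rest is the routine propagation of the scales $O(\bar\eps^{2^k})$, $O(\si^{n-k})$ and $O(b_1^{2^k})$ through the three maps, using the estimates listed at the end of Section \ref{sec-recursive formula of Psi-n-k} and the standing hypotheses $k$ large and $n \geq k+A$.
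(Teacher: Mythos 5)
Your proposal follows essentially the same route as the paper: pick $w_1\in B^{n+1}_v\cap\OO_{R^nF}$, $w_2\in B^{n+1}_c\cap\OO_{R^nF}$ with $\pi_x\circ\Psi^n_k(w_1)=\pi_x\circ\Psi^n_k(w_2)$, convert the $\Bz$-difference via Proposition \ref{formal expression of pi-z Psi difference}, apply the mean value theorem to $\eps_k$ and $\bde_k$ together with Lemma \ref{lem-di-y eps and q asymptotic from n to k} and Corollary \ref{cor-Y-k Z-k and q estimation}, and push the result through $\Psi^k_0$ using $\ddot y_1=\ddot y_2$. One bookkeeping point: your own intermediate estimates give a $\Bz$-coordinate contribution $\|\si_{k,\,0}(\ddot\Bz_1-\ddot\Bz_2)\|=O(\si^{k}\si^{2(n-k)}\bar\eps^{2^k})$, which dominates $\si^{2k}\si^{2(n-k)}\bar\eps^{2^k}$, so what is actually established is the bound with $\si^{k}$ in the second term rather than $\si^{2k}$ --- this is exactly the discrepancy already present in the paper, whose proof also ends with $\si^{k}\si^{2(n-k)}\bar\eps^{2^k}$ while the lemma statement asserts $\si^{2k}$.
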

\begin{proof}
Recall the expression of the map $ \Psi^n_k $ from $ B(R^nF) $ to $ B^{n-k}_{{\bf v}}(R^kF) $.
\msk
\begin{equation*}
\begin{aligned}
\Psi^n_k(w) = 
\begin{pmatrix}
1 & t_{n,\,k} & \Bu_{n,\,k} \\[0.2em]
& 1 & \\
& \Bd_{n,\,k} & \Id_{m \times m}
\end{pmatrix}
\begin{pmatrix}
\alpha_{n,\,k} & & \\
& \si_{n,\,k} & \\
& & \si_{n,\,k} \cdot \Id_{m \times m}
\end{pmatrix}
\begin{pmatrix}
x + S^n_k(w) \\
y \\[0.2em]
\Bz + {\bf R}_{n,\,k}(y)
\end{pmatrix} .
\end{aligned} \msk
\end{equation*}
where $ {\bf v} = v^{n-k} \in W^{n-k} $. Let us choose two different points as follows
$$ w_1 = (x_1,\,y_1,\,\Bz_1) \in B^1_v(R^nF) \cap \OO_{R^nF}, \quad w_2 = (x_2,\,y_2,\,\Bz_2) \in B^1_c(R^nF) \cap \OO_{R^nF} . $$
Then by the above expression of $ \Psi^n_k $ and the assumption of the overlapping on the $ x- $axis, we may assume the following estimation
\ssk
\begin{equation*}
\begin{aligned}
\dot x_1 - \dot x_2 &= \ 0 \\
\dot y_1 - \dot y_2  &= \ \si_{n,\,k} (y_1 -y_2) \\
\dot \Bz_1  - \dot \Bz_2  &= \ \si_{n,\,k} \big[\, \Bd_{n,\,k}\,(y_1 -y_2) + \Bz_1 - \Bz_2 + {\bf R}_{n,\,k}(y_1) - {\bf R}_{n,\,k}(y_2)\, \big] .
\end{aligned} \msk
\end{equation*}
Moreover, the definitions of $ F_k $ and $ \Psi^k_{0,\,{\bf v}} $ implies that 
$$ \dddot y_1 - \dddot y_2 = \si_{k,\,0}\cdot (\ddot y_1 - \ddot y_2) = \si_{k,\,0}\cdot(\dot x_1 - \dot x_2) =0 . $$
By mean value theorem and the fact that $ (\ddot x_j,\, \ddot y_j,\, \ddot \Bz_j ) = R^kF (\dot x_j,\, \dot y_j,\, \dot \Bz_j ) $ for $ j =1,2 $, we obtain the following equations
\begin{align}
\ddot{x}_1 - \ddot{x}_2 
& = \ f_k(\dot x_1) - \eps_k(\dot w_1) - \big[\,f_k(\dot x_2) - \eps_k(\dot w_2)\,\big]  \label{distance of ddot x-1 and ddot x-2}\\[0.4em]
&= \ - \eps_k(\dot w_1) + \eps_k(\dot w_2) \nonumber \\[0.4em]
&= \ - \di_y \eps_k(\eta) \cdot (\dot y_1 - \dot y_2 ) - E_k(\eta) \cdot (\dot \Bz_1 - \dot \Bz_2 ) \nonumber  \\[0.4em]
&= \ - \di_y \eps_k(\eta)  \cdot  \si_{n,\,k} (y_1 -y_2) \nonumber  \\ & \qquad 
- E_k(\eta) \cdot \si_{n,\,k} \big[\, \Bd_{n,\,k}\,(y_1 -y_2) + \Bz_1 - \Bz_2 + {\bf R}_{n,\,k} (y_1) - {\bf R}_{n,\,k}(y_2)\, \big]  \nonumber \\
%
&= \ -  \di_y \eps_k(\eta)  \cdot  \si_{n,\,k} (y_1 -y_2)  - E_k(\eta) \cdot \si_{n,\,k} \ \sum_{i=k}^{n-1} \Bq_i \circ (\si_{n,\,i}\, \bar y) \cdot (y_1 -y_2)  \nonumber \\[-0.6em]
&\qquad - E_k(\eta) \cdot \si_{n,\,k} (\Bz_1 - \Bz_2) \nonumber  \\
&= \ - \Big[\, \di_y \eps_k(\eta) + E_k(\eta) \cdot \sum_{i=k}^{n-1} \Bq_i \circ (\si_{n,\,i}\, \bar y) \,\Big] \cdot \si_{n,\,k} \, (y_1 -y_2) - E_k(\eta) \cdot \si_{n,\,k} \, (\Bz_1 - \Bz_2) \nonumber  
\end{align} \msk
where $ \eta $ is some point in the line segment between $ \dot{w}_1 $ and $ \dot{w}_2 $ in $ \Psi^n_k(B) $ and $ \bar y $ is in the line segment between $ y_1 $ and $ y_2 $. The second last equation 
is involved with Proposition \ref{formal expression of pi-z Psi difference}. Recall that $ | \;\! y_1 - y_2 | \asymp 1 $ and $ \| \;\! \Bz_1 - \Bz_2 \| = O\big(\bar \eps^{2^n}\big) $ because every point in the critical Cantor set $ \OO_{F_n} $ has its inverse image under $ F_n $. Thus by Lemma \ref{lem-di-y eps and q asymptotic from n to k}, we obtain that
\begin{equation} \label{ddot x distance estimation}
\begin{aligned}
| \, \ddot x_1 - \ddot x_2 | \leq C_1\, \si^{n-k}\big[\, b_1^{2^k} + \oeps^{2^k}\si^{n-k} + \oeps^{2^k} \oeps^{2^n} \,\big] .
\end{aligned}\msk
\end{equation}
Similarly, we have 
\msk
\begin{align}
&\quad \ \ \ddot{\Bz}_1 - \ddot{\Bz}_2  \nonumber \\[0.2em]
&= \ \bde_k(\dot{w}_1) - \bde_k(\dot{w}_{\,2})  \nonumber \\[0.4em]
&= \ Y_k(\zeta) \cdot (\dot y_1 - \dot y_2 ) + Z_k(\zeta) \cdot (\dot \Bz_1 - \dot \Bz_2 ) \nonumber \\[0.4em]
&= \ Y_k(\zeta) \cdot \si_{n,\,k} (y_1 -y_2) 
+ Z_k(\zeta) \cdot \si_{n,\,k} \big[\, \Bd_{n,\,k}(y_1 -y_2) + \Bz_1 - \Bz_2 + {\bf R}_{n,\,k}(y_1) - {\bf R}_{n,\,k}(y_2)\, \big]  \nonumber \\
&= \ Y_k(\zeta) \cdot \si_{n,\,k} \, (y_1 -y_2) + Z_k(\zeta) \cdot \si_{n,\,k} \; \sum_{i=k}^{n-1} \Bq_i \circ (\si_{n,\,i}\, \bar y) \cdot (y_1 -y_2)  \nonumber \\[-0.6em]
&\qquad + Z_k(\zeta) \cdot \si_{n,\,k} \, (\Bz_1 - \Bz_2)  \nonumber \\
&= \ \Big[\, Y_k(\zeta) + Z_k(\zeta) \cdot \sum_{i=k}^{n-1} \Bq_i \circ (\si_{n,\,i}\, \bar y) \,\Big] \cdot \si_{n,\,k} \, (y_1 -y_2) + Z_k(\zeta) \cdot \si_{n,\,k} \, (\Bz_1 - \Bz_2) \label{distance of ddot z-1 and ddot z-2}
\end{align}
where $ \zeta $ is some point in the line segment between $ \dot{w}_1 $ and $ \dot{w}_2 $ in $ \Psi^n_k(B) $. By Corollary \ref{cor-Y-k Z-k and q estimation}, the upper bounds of $ \| \:\!\ddot \Bz_1 - \ddot \Bz_2 \| $ is 
\msk
\begin{equation} \label{ddot z distance estimation}
\begin{aligned}
\|\:\! \ddot \Bz_1 - \ddot \Bz_2 \| \leq C_2\, \si^{n-k} \big[\, \si^{n-k} 
\oeps^{2^k} + \oeps^{2^k} \oeps^{2^n} \,\big] . 
\end{aligned} \msk
\end{equation}
Recall 
\begin{equation*}
\begin{aligned}
\pi_x \circ \Psi^n_k(w) = \alpha_{n,\,k} \big[\,x + S^n_k(w) \,\big] + \si_{n,\,k} \big[\, t_{n,\,k}\,y + \Bu_{n,\,k} \cdot ( \Bz + {\bf R}_{n,\,k}(y)) \,\big] .
\end{aligned} \msk
\end{equation*}
Then the fact that $ \ddot y_1 - \ddot y_2 = 0 $ implies that
\msk
\begin{equation} 
\begin{aligned}
\dddot x_1 - \dddot x_2 &= \ \pi_x \circ \Psi^k_0(\ddot w_1) - \pi_x \circ \Psi^k_0(\ddot w_2) \\[0.3em]
&= \ \alpha_{k,\,0} \big[\,(\ddot x_1 + S^k_0( \ddot w_1)) - (\ddot x_2 + S^k_0( \ddot w_2)) \,\big] \\
&\qquad + \si_{k,\,0}\, \big[\, t_{k,\,0}\,(\ddot y_1 - \ddot y_2 ) + \Bu_{k,\,0} \cdot \big( \ddot \Bz_1 -\ddot \Bz_2 + {\bf R}_{k,\,0} ( \ddot y_1) - {\bf R}_{k,\,0}( \ddot y_2) \big)\, \big] \\[0.3em]
&= \ \alpha_{k,\,0} \big[\, v_*'(\bar x) + O(\bar \eps + \rho^k) \,\big](\ddot x_1 - \ddot x_2) + \si_{k,\,0} \cdot \Bu_{k,\,0}\cdot ( \ddot \Bz_1 -\ddot \Bz_2)
\end{aligned} \msk
\end{equation}
where $ \bar x $ is some point in the line segment between $ \ddot x_1 $ and $ \ddot x_2 $. Moreover,
\msk
\begin{equation}
\begin{aligned}
\dddot \Bz_1 - \dddot \Bz_2 &= \ \pi_z \circ \Psi^k_0(\ddot w_1) - \pi_z \circ \Psi^k_0(\ddot w_2) \\[0.2em]
&= \ \si_{k,\,0}\,(\ddot \Bz_1 - \ddot \Bz_2 ) +  \si_{k,\,0} \big[\, \Bd_{k,\,0}(\ddot y_1 - \ddot y_2) + {\bf R}_{n,\,k}(\ddot y_1) - {\bf R}_{n,\,k}(\ddot y_2)\, \big] \phantom{***} 
\\[0.3em]
&= \ \si_{k,\,0}\,(\ddot \Bz_1 - \ddot \Bz_2 ) .
\end{aligned} \msk
\end{equation}
Let us apply the estimations in \eqref{ddot x distance estimation} and \eqref{ddot z distance estimation} to $ | \:\!\dddot x_1 - \dddot x_2 | $ and $ \| \;\! \dddot \Bz_1 - \dddot \Bz_2 \| $. Then the minimal distance is bounded above as follows
\msk
\begin{equation} \label{intermediate minimal distance between regions}
\begin{aligned}
\dist_{\min}(B^{n}_{{\bf w}v}, B^{n}_{{\bf w}c}\,) &\leq \ | \, \dddot x_1 - \dddot x_2 | + \| \,\dddot \Bz_1 - \dddot \Bz_2 \| \\[0.3em]
&\leq \ \big[\,\si^{2k} \cdot | \,\ddot x_1 - \ddot x_2 | \cdot v_*(\bar x) + \si^k \cdot (1 + \| \:\! \Bu_{k,\,0} \|)  \,\| \;\!\ddot \Bz_1 -\ddot \Bz_2 \|\,\big] (1 + O(\rho^k)) \\[0.3em]
&\leq \ C_3  \si^{2k} \si^{n-k}\,\big[\, b_1^{2^k} + \bar \eps^{2^k} \si^{n-k}+ \bar \eps^{2^k} \bar \eps^{2^n} \,\big] + C_4 \si^k \si^{n-k} \,\big[ \si^{n-k} 
\oeps^{2^k} + \oeps^{2^k} \bar \eps^{2^n} \,\big] \\[0.2em]
& \leq C_5\,\Big[\, \si^{2k} \si^{n-k} b_1^{2^k} + \si^{2k} \si^{2(n-k)} \bar \eps^{2^k} + \si^{2k} \si^{n-k} \bar \eps^{2^k} \bar \eps^{2^n} + \si^k \si^{2(n-k)} \bar \eps^{2^k} \,\Big] \\
& = C_5\,\Big[\,  \si^{2k} \si^{2(n-k)} \big( b_1^{2^k} + \bar \eps^{2^k} \bar \eps^{2^n} \big) + \si^k \si^{2(n-k)} \big( \si^k \bar \eps^{2^k} +  \bar \eps^{2^k} \big)\,\Big]    
\end{aligned} \msk
\end{equation}
for some positive numbers $ C_3 $, $ C_4 $ and $ C_5 $. Moreover, if $ n $ is big enough satisfying $ n \geq k + A $ where $ A $ is the constant defined in Proposition \ref{prop-estimation of t-n-k by b-1}, then we obtain the condition, $ b_1^{2^k} \gtrsim \bar \eps^{2^k} \bar \eps^{2^n} $. Hence, the estimation \eqref{intermediate minimal distance between regions} is refined as follows
\msk
\begin{equation} \label{minimal distance between domains}
\begin{aligned}
\dist_{\min}(B^{n}_{{\bf w}v}, B^{n}_{{\bf w}c}\,) &\leq \ | \, \dddot x_1 - \dddot x_2 | + \| \,\dddot \Bz_1 - \dddot \Bz_2 \| \\
&\leq \ C \big[\, \si^{2k} \si^{n-k} b_1^{2^k} + \si^{k} \si^{2(n-k)} \bar \eps^{2^k} \,\big] 
\end{aligned}
\end{equation}
for some $ C>0 $.
\end{proof}
\msk
\nin Let the box $ B^{n}_{{\bf w}v} $ be $ \Psi^n_{0,\;\!{\bf w}}(B^1_v(R^nF)) $ for the word $ {\bf w} $ of length $ n $.

\begin{lem} \label{lower bounds of the diameter of boxes}
Let $ F \in {\NN} \cap \II_B(\bar \eps) $. Take big enough $ n $ such that $ n \geq k + A $ where $ A $ is the number defined in Proposition \ref{prop-estimation of t-n-k by b-1} depending only on $ \oeps $ and $ b_1 $. Then 
$$  \diam (B^{n}_{{\bf w}v}) \geq | \, C_1 \, \si^k \si^{2(n-k)} - C_2\, \si^k \si^{n-k}b_1^{2^k} | $$
where $ {\bf w} = v^kc\,v^{n-k-1} \in W^n $ for some positive constants $ C_1 $ and $ C_2 $.
\end{lem}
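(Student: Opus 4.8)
The plan is to produce an explicit pair of points in $ B^{n}_{{\bf w}v} = \Psi^n_{0,\,{\bf w}}(B^1_v(R^nF)) $ whose separation is bounded below by the asserted quantity. Since $ {\bf w} = v^k c\, v^{n-k-1} $ and $ \psi^{k+1}_c = F_k \circ \psi^{k+1}_v $, the conjugation factors as $ \Psi^n_{0,\,{\bf w}} = \Psi^k_{0,\,{\bf v}} \circ F_k \circ \Psi^n_{k,\,{\bf v}} $. The key is that the $ y $-coordinate of this composition is almost linear: because $ F_k $ is H\'enon-like we have $ \pi_y \circ F_k = \pi_x $, and $ \pi_y \circ \Psi^k_{0,\,{\bf v}} $ depends affinely on its $ y $-argument with slope $ \si_{k,\,0} $; combining this with the matrix form of $ \Psi^n_{k,\,{\bf v}} $ recalled in Section \ref{sec-recursive formula of Psi-n-k}, the difference of the $ y $-coordinates of $ \Psi^n_{0,\,{\bf w}} $ at two points $ w_1, w_2 \in B^1_v(R^nF) $ is
$$ \si_{k,\,0}\,\Big[\,\alpha_{n,\,k}\big((x_1+S^n_k(w_1))-(x_2+S^n_k(w_2))\big)+\si_{n,\,k}\big(t_{n,\,k}(y_1-y_2)+\Bu_{n,\,k}\cdot\big((\Bz_1-\Bz_2)+{\bf R}_{n,\,k}(y_1)-{\bf R}_{n,\,k}(y_2)\big)\big)\,\Big] . $$
Since $ \diam(B^{n}_{{\bf w}v}) $ is at least the absolute value of this expression, the lemma reduces to estimating it for a good choice of $ w_1,w_2 $.

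For the points I would take $ w_1,w_2 \in B^1_v(R^nF)\cap\OO_{R^nF} $ realizing the $ x $-diameter of that Cantor piece, so that $ |x_1-x_2|\asymp\si $ while $ \|\Bz_1-\Bz_2\|=O(\bar\eps^{2^n}) $ — the latter because every point of the critical Cantor set has an $ F_n $-preimage, exactly as already used in this section. With this choice the three contributions above separate by scale. The $ \alpha_{n,\,k} $-term has magnitude $ \asymp\si^k\si^{2(n-k)} $ (absorbing the factor $ \si $ from $ |x_1-x_2| $), because $ |\si_{k,\,0}\alpha_{n,\,k}|=\si^k\si^{2(n-k)}(1+O(\rho^k)) $ and $ x+S^n_k(w)=v_*(x)+O(\bar\eps^{2^k}+\rho^{n-k}) $ with $ v_* $ a diffeomorphism, so the bracketed difference is $ \asymp\si $ once $ k $ is large and $ n\ge k+A $. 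The $ t_{n,\,k} $-term is bounded by $ C\,\si^k\si^{n-k}\,|t_{n,\,k}|\asymp\si^k\si^{n-k}b_1^{2^k} $ by Proposition \ref{prop-estimation of t-n-k by b-1} (which requires precisely $ n>k+A $). The $ \Bu_{n,\,k} $-contributions are $ O\big(\si^k\si^{n-k}\bar\eps^{2^k}\bar\eps^{2^n}+\si^k\si^{2(n-k)}\bar\eps^{2^{k+1}}\big) $ by the estimates (1)--(5) at the end of Section \ref{sec-recursive formula of Psi-n-k} and Proposition \ref{exponential smallness of R-n-k}, and both pieces are negligible beside $ \si^k\si^{2(n-k)} $ (the second trivially, the first since $ \bar\eps^{2^n} $ decays doubly exponentially). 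A reverse triangle inequality then yields $ \diam(B^{n}_{{\bf w}v})\ge C_1\si^k\si^{2(n-k)}-C_2\si^k\si^{n-k}b_1^{2^k} $, and the absolute value in the statement only weakens the right side, which is harmless.

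I expect the one genuine difficulty to be the possible near-cancellation between the scale $ \si^k\si^{2(n-k)} $ and the scale $ \si^k\si^{n-k}b_1^{2^k} $ along the $ y $-axis: the sign of $ \si_{n,\,k}=(-\si)^{n-k}(1+O(\rho^k)) $ alternates, so for unlucky pairs $ (n,k) $ the two main terms could partially offset. Because $ \diam $ is a supremum over pairs of points, one can sidestep this by using instead a pair whose spread is concentrated in the $ x $-direction of $ B^1_v(R^nF) $ (with negligible $ y $-spread), making the $ t_{n,\,k} $-term vanish to leading order and giving the clean bound $ \gtrsim\si^k\si^{2(n-k)} $; dually a pair spread only in $ y $ gives $ \gtrsim\si^k\si^{n-k}b_1^{2^k} $, and $ \diam $ dominates the maximum of these two, which is $ \ge|C_1\si^k\si^{2(n-k)}-C_2\si^k\si^{n-k}b_1^{2^k}| $. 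The remaining bookkeeping — that the additive errors from $ S^n_k $, from $ {\bf R}_{n,\,k} $, and from the $ \Bz $-spread are of strictly lower order once $ k $ is large and $ n\ge k+A $ — is routine given Proposition \ref{exponential smallness of R-n-k}, Proposition \ref{prop-estimation of t-n-k by b-1}, and the estimates collected at the end of Section \ref{sec-recursive formula of Psi-n-k}.
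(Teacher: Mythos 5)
Your argument is essentially the paper's proof: the same factorization $ \Psi^k_{0,\,{\bf v}}\circ F_k\circ\Psi^n_{k,\,{\bf v}} $, the same reduction $ \diam(B^n_{{\bf w}v})\ge|\dddot y_1-\dddot y_2|=|\si_{k,\,0}(\dot x_1-\dot x_2)| $, the same choice of points in $ B^1_v(R^nF)\cap\OO_{R^nF} $ with $ O(1) $ horizontal spread and $ O(\bar\eps^{2^n}) $ spread in $ \Bz $, and the same term-by-term estimates with Proposition \ref{prop-estimation of t-n-k by b-1} supplying $ t_{n,\,k}\asymp b_1^{2^k} $. Two remarks. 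First, your claim that \lq\lq the absolute value in the statement only weakens the right side\rq\rq\ is backwards: $ |C_1X-C_2Y|\ge C_1X-C_2Y $, so the stated bound is the \emph{stronger} one. This costs you nothing, however, because the reverse triangle inequality in its symmetric form $ |a+b|\ge\big|\,|a|-|b|\,\big| $, applied to the two leading terms of $ \dot x_1-\dot x_2 $ (which may have opposite signs since $ \si_{n,\,k}=(-\si)^{n-k}(1+O(\rho^k)) $), yields exactly $ |\,C_1\si^k\si^{2(n-k)}-C_2\si^k\si^{n-k}b_1^{2^k}| $; this is precisely what the paper does --- it makes no attempt to rule out the cancellation, and the absolute value of the difference \emph{is} the worst-case bound. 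Second, your proposed fallback of a pair in $ \OO_{R^nF} $ with $ x $-spread $ \asymp 1 $ but negligible $ y $-spread is not available: every $ w=(x,y,\Bz)\in\OO_{R^nF} $ is $ F_n(w') $ for some $ w'\in\OO_{R^nF} $, hence $ x=f_n(y)-\eps_n(w') $, so the $ x $-spread is controlled by the $ y $-spread up to $ O(\bar\eps^{2^n}) $. Since that fallback is not needed to reach the stated inequality, the main line of your proof stands.
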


\begin{proof}
Let us choose two points 
$$ w_j = (x_j,\,y_j,\,\Bz_j) \in B^1_v(R^nF) \cap \OO_{R^nF} $$
for $ j =1,2 $ 
satisfying $ | \:\! x_1 - x_2| \asymp 1 $ and $ | \:\! y_1 - y_2| = O(1) $. Thus we may assume that $ \| \:\! \Bz_1 - \Bz_2\| = O(\bar \eps^{2^n}) $ by the equation \eqref{z distance of two points}. Recall that the diameter of the box $ B^{n}_{{\bf w}v} $ is greater than the distance between any two points in $ B^{n}_{{\bf w}v} $. Let $ \dddot w_j = \Psi^k_{0,\,{\bf v}} \circ F_k \circ \Psi^n_{k,\,{\bf v}}(w_j) $, $ \Psi^k_{0,\,{\bf v}}(w_j) = \dot w_j $, and $ F_k(\dot w_j) = \ddot w_j $ for $ j =1,2 $. 
Then
\begin{equation*}
\begin{aligned}
\diam (B^{n+1}_{v}) = \sup\; \{\,\| \;\! w_1 -  w_2 \| \;\big| \ w_1,\;  w_2 \in B^{n+1}_{v} \,\} \asymp 1 . 
\end{aligned} 
\end{equation*} 
We may assume that $ |\;\! x_1 - x_2 | \asymp 1 $ and $ |\;\! y_1 - y_2 | \asymp 1 $ by the appropriate choice of $ w_1 $ and $ w_2 $. The definition of the H\'enon-like map, $ F_k $ and the coordinate change map, $ \Psi^k_{0,\,{\bf v}} $ implies that
\msk
\begin{equation*}
\begin{aligned}
\diam (B^{n}_{{\bf w}v}) & \geq \| \:\!\dddot w_1 - \dddot w_2 \|\geq | \;\!\dddot y_1 - \dddot y_2 | \\[0.2em]
&= | \,\si_{k,\,0}\, (\ddot y_1 - \ddot y_2 )| \\[0.2em]
&= | \,\si_{k,\,0}\, (\dot x_1 - \dot x_2 ) | \\[0.2em]
&= \big| \, \si_{k,\,0} \,\big[\,\pi_x \circ \Psi^n_k(w_1) - \pi_x \circ \Psi^n_k(w_2) \,\big] \:\!\big| 
\end{aligned} \msk
\end{equation*}
for any two points $ \dddot w_1, \dddot w_2 \in B^{n}_{{\bf w}v} $. Recall the equation
\msk
\begin{equation*}
\begin{aligned}
\pi_x \circ \Psi^n_k(w) = \alpha_{n,\,k} \big[\,x + S^n_k(w)\,\big] + \si_{n,\,k} \big[\, t_{n,\,k}\,y + \Bu_{n,\,k} \cdot (\Bz + {\bf R}_{n,\,k} (y)) \,\big] .
\end{aligned} \msk
\end{equation*}
and recall $ x + S^n_k(w) = v_*(x) + O(\bar \eps^{2^k} + \rho^{n-k}) $ for a diffeomorphism $ v_*(x) $. Thus 
$$ |\, v_*(x_1) - v_*(x_2) | = |\,v_*'(\bar x)\cdot (x_1 - x_2) | \geq C_0 > 0  $$
where $ \bar x $ is in the line segment between $ x_1 $ and $ x_2 $. Thus 

\begin{equation*}
\begin{aligned}
 \dot x_1 - \dot x_2 
&= \ \alpha_{n,\,k} \big[ \big(x_1 + S^n_k(w_1) \big) - \big(x_2 + S^n_k(w_2)\big) \big] \\
& \qquad 
+ \si_{n,\,k} \big[\, t_{n,\,k}(y_1 - y_2) + \Bu_{n,\,k} \cdot \big \{ \Bz_1 - \Bz_2 + {\bf R}_{n,\,k}(y_1) - {\bf R}_{n,\,k}(y_2) \big \} \big] \\[0.3em]
&= \ \alpha_{n,\,k} \big[\,v_*'(\bar x)\cdot (x_1 - x_2) + O(\bar \eps^{2^k} + \rho^{n-k}) \big] \\
&\qquad + \si_{n,\,k} \big[\,t_{n,\,k}(y_1 - y_2) + \Bu_{n,\,k} \cdot \big \{ \Bz_1 - \Bz_2 + ({\bf R}_{n,\,k})'(\bar y)\cdot (y_1 - y_2) \big \} \big] 
\end{aligned} \msk
\end{equation*}

\nin Then by Proposition \ref{prop-estimation of t-n-k by b-1} and the estimations in the end of Section \,\ref{sec-recursive formula of Psi-n-k}, we obtain that
\msk 
\begin{equation} \label{lower estimate of x dot distance 2}
\begin{aligned}
\big| \;\! \dot x_1 - \dot x_2 \big| &= \Big|\: C_3\:\! \si^{2(n-k)} + \si^{n-k} \,\big[\, C_4 \;\! b_1^{2^k} + C_5\;\! \bar \eps^{2^k} \big( \,\bar \eps^{2^n} + \si^{n-k} \bar \eps^{2^k} \big) \,\big] \:\!\Big| \\[0.2em]
&= \Big|\:\si^{2(n-k)}\,\big[\,C_3 + C_4\,\bar \eps^{2^{k+1}} \,\big] + \si^{n-k} \,\big[\, C_4 \;\! b_1^{2^k} + C_5\;\! \bar \eps^{2^k} \bar \eps^{2^n} \,\big] \:\!\Big|
\end{aligned} \msk
\end{equation}
for some constants $ C_3 $, $ C_4 $, and $ C_5 $, which do not have to be positive. Let us take big enough $ n $ satisfying $ n \geq k + A $ where $ A $ is defined in Proposition \ref{prop-estimation of t-n-k by b-1}. Thus we obtain that $ b_1^{2^k} \gtrsim \bar \eps^{2^k} \bar \eps^{2^n} $. Hence, 
\begin{equation*}
\begin{aligned}
\diam (B^{n}_{{\bf w}}) \geq \big|\;\!\dddot y_1 - \dddot y_2 \big| \geq \big| \,\si_{k,\,0}\, (\dot x_1 - \dot x_2 ) \big| \geq
\big| \, C_1 \, \si^k \si^{2(n-k)} - C_2\, \si^k \si^{n-k}b_1^{2^k} \big|
\end{aligned} \ssk
\end{equation*}
where $ {\bf w} = v^kc\,v^{n-k-1} \in W^n $.

\end{proof}

\begin{rem}
In the above lemma, we may choose two points $ w_1 $ and $ w_2 $ which maximize $ |\;\!\dot x_1 - \dot x_2| $. Thus we may assume that
$$ \diam (B^{n}_{{\bf w}}) \geq \max \,\{\, C_1 \, \si^k \si^{2(n-k)},\ \ C_2\, \si^k \si^{n-k}b_1^{2^k} \,\} $$
with appropriate positive constants $ C_1 $ and $ C_2 $.
\end{rem} \msk
\nin For the generic geometry of the Cantor attractor, both $ k $ and $ n $ travels through any big natural numbers toward the infinity
. 
Then by the comparison of the diameter of the box and minimal distance between adjacent boxes, $ \OO_F $ has the unbounded geometry.
\msk

\begin{prop} 
Let $ F_{b_1} $ is an element of parametrized family for $ b_1 \in [0,1] $ in $ {\NN} \cap \II_B(\bar \eps) $. If $ b_1^{2^k} \asymp \si^{n-k} $ for infinitely many $ k $ and $ n $, then there exists $ b_1 $ for $ F_{b_1} $ such that $ B^n_{{\bf v}v}(R^kF) $ overlaps $ B^n_{{\bf v}c}(R^kF) $ on the $ x- $axis where the word $ {\bf v} = v^{n-k} \in W^{n-k} $. Furthermore $ F_{b_1} $ has no bounded geometry on $ \OO_{F_{b_1}} $.
\end{prop}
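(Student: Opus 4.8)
The plan is to prove the two assertions separately: (i) under the stated asymptotic relation, and for a suitably chosen value of the parameter $b_1$, the pieces $\Psi^n_k(B^{n+1}_v)$ and $\Psi^n_k(B^{n+1}_c)$ (i.e.\ $B^n_{{\bf v}v}(R^kF)$ and $B^n_{{\bf v}c}(R^kF)$) overlap on the $x$-axis for infinitely many pairs $(k,n)$; (ii) that this overlap, again via $\si^{n-k}\asymp b_1^{2^k}$, forces the minimal distance between adjacent pieces to be of strictly smaller order than their diameter, contradicting bounded geometry. Part (i) is essentially the converse of Corollary~\ref{cor-comparison sigma n-k with b-1} and is where the main work lies.

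For part (i) I would start from the explicit expression
\begin{equation*}
\pi_x \circ \Psi^n_k(w) = \alpha_{n,\,k}\big(x + S^n_k(w)\big) + \si_{n,\,k}\big(t_{n,\,k}\,y + \Bu_{n,\,k}\cdot(\Bz + {\bf R}_{n,\,k}(y))\big),
\end{equation*}
and, regarding $D(w_1,w_2) := \pi_x\circ\Psi^n_k(w_1) - \pi_x\circ\Psi^n_k(w_2)$ as a continuous function on the connected compact set $\overline{B^{n+1}_v}\times\overline{B^{n+1}_c}$, note that horizontal overlap is exactly the statement that $0$ lies in the range of $D$, which is an interval. Using $x + S^n_k(w) = v_*(x) + O(\bar \eps^{2^k}+\rho^{n-k})$ with $v_*$ a diffeomorphism, and the structural fact (inherited from $F_*$, up to $O(\rho^n)$) that the $x$-projections of $B^{n+1}_v$ and $B^{n+1}_c$ are disjoint closed intervals with a fixed gap, and likewise $\pi_y(B^{n+1}_v)$ and $\pi_y(B^{n+1}_c)=\pi_x(B^{n+1}_v)$, the first summand of $D$ ranges over an interval $\alpha_{n,\,k}[A_-,A_+]$ with $A_\pm$ of one fixed sign and $|A_\pm|\asymp 1$, while $y_1-y_2$ ranges over $[B_-,B_+]$ with $B_\pm$ of one fixed sign and $|B_\pm|\asymp 1$. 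By the estimates collected at the end of Section~\ref{sec-recursive formula of Psi-n-k} together with $\|\Bz_1-\Bz_2\| = O(\bar\eps^{2^n})$ (from \eqref{z distance of two points}), the term $\si_{n,\,k}\Bu_{n,\,k}\cdot\{\Bz_1-\Bz_2+{\bf R}_{n,\,k}(y_1)-{\bf R}_{n,\,k}(y_2)\}$ is $O\big(\si^{n-k}\bar\eps^{2^k}(\bar\eps^{2^n}+\si^{n-k}\bar\eps^{2^k})\big)$, negligible against $\si^{2(n-k)}$ once $n\ge k+A$. Hence, up to this error, the range of $D$ equals the Minkowski sum $\alpha_{n,\,k}[A_-,A_+] + \si_{n,\,k}t_{n,\,k}[B_-,B_+]$; by Proposition~\ref{prop-estimation of t-n-k by b-1} ($t_{n,\,k}\asymp b_1^{2^k}$) and the hypothesis $b_1^{2^k}\asymp\si^{n-k}$, both summand intervals have length and center of exact order $\si^{2(n-k)}$. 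Passing to a sub-subsequence of pairs for which $n-k$ has the parity putting $\si_{n,\,k}t_{n,\,k}[B_-,B_+]$ on the side of $0$ opposite to $\alpha_{n,\,k}[A_-,A_+]$, the requirement that $0$ belong to the range of $D$ reduces to
\begin{equation*}
\frac{|B_+|}{A_+}\ \le\ \frac{\alpha_{n,\,k}}{|\si_{n,\,k}|\,t_{n,\,k}}\ =\ \frac{\si^{n-k}}{t_{n,\,k}}\,(1+O(\rho^k))\ \le\ \frac{|B_-|}{A_-},
\end{equation*}
a fixed, nonempty window (since $|B_-|\ge|B_+|$ and $A_-\le A_+$); choosing $b_1$ so that the constant implicit in $b_1^{2^k}\asymp\si^{n-k}$ places $\si^{n-k}/t_{n,\,k}$ strictly inside this window for infinitely many $(k,n)$ puts $0$ in the interior of the range of $D$, and since the two $x$-projections are nondegenerate intervals their intersection then contains a subinterval, which is the claimed horizontal overlap.

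For part (ii), given this overlap I would invoke Lemma~\ref{upper bound the distance of boxes} and Lemma~\ref{lower bounds of the diameter of boxes} (with its Remark): for $ {\bf w}=v^kc\,v^{n-k-1} $,
\begin{equation*}
\dist_{\min}(B^n_{{\bf w}v},B^n_{{\bf w}c})\ \le\ C\big[\si^{2k}\si^{n-k}b_1^{2^k}+\si^k\si^{2(n-k)}\bar\eps^{2^k}\big],\qquad
\diam(B^n_{{\bf w}v})\ \ge\ \max\{C_1\si^k\si^{2(n-k)},\ C_2\si^k\si^{n-k}b_1^{2^k}\}.
\end{equation*}
Substituting $\si^{n-k}\asymp b_1^{2^k}$ makes $\si^{n-k}b_1^{2^k}$ and $\si^{2(n-k)}$ both $\asymp b_1^{2^{k+1}}$, so that $\diam(B^n_{{\bf w}v})\gtrsim \si^k b_1^{2^{k+1}}$ while $\dist_{\min}(B^n_{{\bf w}v},B^n_{{\bf w}c})\lesssim \si^{2k}b_1^{2^{k+1}}+\si^k b_1^{2^{k+1}}\bar\eps^{2^k}\lesssim \si^{2k}b_1^{2^{k+1}}$, using $\bar\eps^{2^k}\ll\si^k$ for large $k$. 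Therefore
\begin{equation*}
\frac{\dist_{\min}(B^n_{{\bf w}v},B^n_{{\bf w}c})}{\diam(B^n_{{\bf w}v})}\ \lesssim\ \si^k\ \longrightarrow\ 0
\end{equation*}
along the infinitely many chosen pairs $(k,n)$; as bounded geometry would force $\dist_{\min}(B^n_{{\bf w}v},B^n_{{\bf w}c})\asymp\diam(B^n_{{\bf w}v})$ with a uniform constant, we conclude that $\OO_{F_{b_1}}$ has unbounded geometry.

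The hard part is part (i). Its routine ingredients — that $x+S^n_k$ equals $v_*$ up to a controlled error, and that the $\Bu_{n,\,k}$-, ${\bf R}_{n,\,k}$- and $\Bz_1-\Bz_2$-contributions are genuinely negligible once $n\ge k+A$ — follow directly from the estimates already assembled in Section~\ref{sec-recursive formula of Psi-n-k} and the preceding sections. The delicate point is the cancellation: one must exhibit actual points $w_1\in B^{n+1}_v$, $w_2\in B^{n+1}_c$ with $\pi_x\circ\Psi^n_k(w_1)=\pi_x\circ\Psi^n_k(w_2)$, which needs not merely the order equality $\si^{2(n-k)}\asymp\si^{n-k}b_1^{2^k}$ but control of the implicit constant together with the parity of $n-k$, so that $\si_{n,\,k}t_{n,\,k}(y_1-y_2)$ lands exactly in the window needed to offset the $\alpha_{n,\,k}$-term. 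This is precisely why the statement only asserts the \emph{existence} of such a $b_1$, and it is the reverse implication to Corollary~\ref{cor-comparison sigma n-k with b-1}.
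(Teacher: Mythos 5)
Your proposal is correct and follows essentially the same route as the paper: the same expansion of $\pi_x\circ\Psi^n_k(w_1)-\pi_x\circ\Psi^n_k(w_2)$ into the $\alpha_{n,\,k}$-term and the $\si_{n,\,k}\,t_{n,\,k}$-term with the remaining contributions controlled by the estimates of Section \ref{sec-recursive formula of Psi-n-k} and Proposition \ref{prop-estimation of t-n-k by b-1}, followed by the comparison of Lemma \ref{upper bound the distance of boxes} against Lemma \ref{lower bounds of the diameter of boxes} under $\si^{n-k}\asymp b_1^{2^k}$ to conclude $\dist_{\min}(B^n_{{\bf w}v},B^n_{{\bf w}c})\lesssim \si^k\diam(B^n_{{\bf w}v})$. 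The only difference is cosmetic: you produce the cancellation by an intermediate-value argument in the points $(w_1,w_2)$ (the range of $D$ over the product of boxes is an interval containing $0$), whereas the paper arranges it by varying the parameter $b_1$ so that the factor $1+r_{n,\,k}\,b_1^{2^k}(-\si)^{-(n-k)}$ vanishes; both rest on the same sign and parity discussion of that bracket.
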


\begin{proof}
Let us choose the two points
\begin{equation*}
\begin{aligned}
w_1 = (x_1,\,y_1,\,\Bz_1) \in B^1_v(R^nF) \cap \OO_{R^nF}, \quad w_2 = (x_2,\,y_2,\,\Bz_2) \in B^1_c(R^nF) \cap \OO_{R^nF}
\end{aligned}
\end{equation*}
such that $ | \:\! x_1 - x_2 | \asymp 1 $, $ | \:\! y_1 - y_2 | \asymp 1 $. Recall $ \| \:\! \Bz_1 - \Bz_2 \| = O(\bar \eps^{2^n}) $. Let $ \dot w_j = (\dot x_j,\, \dot y_j,\, \dot \Bz_j) $ be $ \Psi^n_k(w_j) $ for $ j =1,2 $. Thus
\begin{equation} \label{x coordinate of the image under Psi n-k again}
\begin{aligned}
 \dot x_1 - \dot x_2 
&= \ \alpha_{n,\,k} \Big[ \big(x_1 + S^n_k(w_1) \big) - \big(x_2 + S^n_k(w_2)\big) \Big] \\[0.2em]
& 
+ \si_{n,\,k} \Big[\, t_{n,\,k}(y_1 - y_2) + \Bu_{n,\,k} \cdot \big \{ \Bz_1 - \Bz_2 + {\bf R}_{n,\,k}(y_1) - {\bf R}_{n,\,k}(y_2) \big \} \Big] .
\end{aligned} \ssk
\end{equation}
Recall that $ \alpha_{n,\,k} = \si^{2(n-k)}(1 + O(\rho^k)) $,\, $ \si_{n,\,k} = (-\si)^{n-k}(1 + O(\rho^k)) $ and $ x + S^n_k(w) = v_*(x) + O(\bar \eps^{2^k} + \rho^{n-k}) $. Since $ v_* $ is a diffeomorphism and $ | \;\! x_1 - x_2 | \asymp 1 $, $ | \, v_*(x_1) - v_*(x_2) | \asymp 1 $ by mean value theorem. 
Moreover, Proposition \ref{prop-estimation of t-n-k by b-1} implies that
$$ b_1^{2^k} \asymp t_{n,\,k} . $$
In addition to the above estimation, the fact that $ \| ({\bf R}_{n,\,k})' \| = O\big(\si^{n-k} \bar \eps^{2^k} \big) $ and the estimation in \eqref{lower estimate of x dot distance} implies that 
\msk
\begin{equation}
\begin{aligned}
\big| \, \Bu_{n,\,k}\cdot \big\{ \Bz_1 - \Bz_2 + {\bf R}_{n,\,k}(y_1) - {\bf R}_{n,\,k}(y_2) \,\big\}\,\big| &\leq \big\| \, \Bu_{n,\,k}\cdot (\Bz_1 - \Bz_2) \big\| + \big\| \:\!({\bf R}_{n,\,k})'(\bar y)\cdot (y_1 - y_2) \:\!\big\| \\[0.2em]
&= O\big(\,\bar \eps^{2^k} \bar \eps^{2^n} \big) + O\big( \si^{n-k}\bar \eps^{2^k}\big) .
\end{aligned} \msk
\end{equation}
\nin If $ n \geq k + A $ where $ A $ is the constant defined in Proposition \ref{prop-estimation of t-n-k by b-1}, then we express the equation \eqref{x coordinate of the image under Psi n-k again} as follows \ssk
\begin{equation*}
\begin{aligned}
\dot x_1 - \dot x_2 = \si^{2(n-k)} \big[\, v_*(x_1) - v_*(x_2) \,\big] \cdot \big[\, 1 + r_{n,\,k}\,b_1^{2^k} (-\si)^{-(n-k)} \big](1 + O(\rho^k))
\end{aligned} \ssk
\end{equation*}
where $ r_{n,\,k} $ depends uniformly on $ b_1 $. 
\comm{*********************
Let $ r \leq r_{n,\,k} \leq \frac{1}{r} $. \ssk Let us take any number $ b_1^{-} $ in the parameter space $ (0, \bar b_1) $ and any natural number $ k \geq N $ for some big enough $ N $. 
\nin Then we can find the biggest number $ n $ such that $ n-k $ is odd and $ \si^{n-k} > \dfrac{1}{r}\;  (b_1^{-})^{2^k} $, that is,
$$ 1 + r_{n,\,k}\cdot(b_1^{-})^{2^k}(-\si)^{-(n-k)} \geq 1 + \dfrac{1}{r}\;(b_1^{-})^{2^k}(-\si)^{-(n-k)}>0 $$
Let us increase the parameter from $ b_1^- $ to $ b_1^+ $ such that $ (b_1^{+})^{2^k} = \dfrac{2}{r}\;\si^{(n-k)} $. Then
$$ 1 + r_{n,\,k}\cdot(b_1^{+})^{2^k}(-\si)^{-(n-k)} \leq 1 + r \cdot \dfrac{2}{r}\;(-1) = -1 < 0 $$

\nin Then there exists $ b_1 \in (b_1^-, b_1^+) $ such that \,$ \dot x_1 - \dot x_2 = 0 $, \ssk that is, $ \Psi^n_k(B^1_v(R^nF)) $ and $ \Psi^n_k(B^1_c(R^nF)) $ overlaps over the $ x- $axis with respect to $ \dot w_1 $ and $ \dot w_2 $. 
For all big enough $ k $, $ b_1 \asymp b_1^- $. Thus $ \log(b_1 / b_1^-) = O(2^{-k}) $. \ssk Then $ b_1 $ converges to $ b_1^- $ as $ k \ra \infty $. Then we obtain the dense subset of the parameter, \ssk $ (0, \bar b_1) $ on which $ \Psi^n_k(B^1_v(R^nF)) $ and $ \Psi^n_k(B^1_c(R^nF)) $ overlaps over the $ x- $axis. Moreover, there exists open subset, $ J_m $ of parameter $ (0, \bar b_1) $ for each fixed level $ k \geq m $. Then $ \cap_{m} J_m $ is a $ G_{\de} $ subset of $ (0, \bar b_1) $. 
********************************}
\ssk \\
\nin Let us compare the distance of two adjacent boxes and the diameter of the box for every big $ k + A < n $. Let us take $ n $ such that $ \si^{n-k} \asymp b_1^{2^k} $. We may assume \ssk that $ B^{n-k}_{{\bf v}v}(R^kF) $ overlaps $ B^{n-k}_{{\bf v}c}(R^kF) $ on the $ x- $axis where $ {\bf v} = v^{n-k-1} \in W^{n-k-1} $. By Lemma \ref{lower bounds of the diameter of boxes} and Lemma \ref{upper bound the distance of boxes}, \msk
\begin{equation*}
\begin{aligned}
 \diam (B^{n}_{{\bf w}v}) &\geq \big| \, C_1 \, \si^k \si^{2(n-k)} - C_2\, \si^k \si^{n-k}b_1^{2^k} \big|  \\
 \dist_{\min}(B^{n}_{{\bf w}v}, B^{n}_{{\bf w}c}) &\leq C_0 \big[\, \si^{2k} \si^{n-k} b_1^{2^k} + \si^{k} \si^{2(n-k)} \oeps^{2^k} \,\big]
\end{aligned} \msk
\end{equation*}
where $ {\bf w} = v^kc\,v^{n-k-1} \in W^n $ for some numbers $ C_0 > 0 $ and $ C_1 $ and $ C_2 $. 
Hence, 
\begin{equation*}
\dist_{\min}(B^{n}_{{\bf w}v}, B^{n}_{{\bf w}c}) \leq C \,\si^k  \diam (B^{n}_{{\bf w}v}) \msk
\end{equation*}
for every sufficiently large $ k \in \N $ and for some $ C>0 $. Then the critical Cantor set has unbounded geometry.
\end{proof}

Overlapping with $ \si^{n-k} \asymp b_1^{2^k} $ is valid almost everywhere with respect to Lebesgue measure in \cite{HLM}.

\begin{thm}[\cite{HLM}] Given any $ 0 < A_0 < A_1,\ 0 < \si < 1 $ and any $ p \geq 2 $, the set of parameters $ b \in [0,1] $ for which there are infinitely many $ 0<k<n $ satisfying 
\begin{equation*}
A_0 < \frac{b^{p^k}}{\si^{n-k}} < A_1
\end{equation*}
is a dense $ G_{\de} $ set with full Lebesgue measure. 
\end{thm}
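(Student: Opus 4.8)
The plan is to pass to logarithmic coordinates, where the arithmetic condition becomes a statement about how often the orbit of a single point under the map $x\mapsto px\bmod1$ visits a fixed arc. Write $b=e^{-\beta}$ with $\beta\in(0,\infty)$, put $\lambda=-\log\si>0$, and set $m=n-k$; then $A_0<b^{p^k}/\si^{n-k}<A_1$ is equivalent to $m\lambda-p^k\beta\in(\log A_0,\log A_1)$, i.e. to $m$ lying in the open interval $\big(\gamma_k,\gamma_k+\ell\big)$ of length $\ell:=\log(A_1/A_0)/\lambda>0$, where $\gamma_k:=p^k\beta/\lambda+\log A_0/\lambda$. If $\ell\geq1$ this interval contains an integer for every large $k$ (since $\gamma_k\to\infty$), so the theorem is trivial; assume $\ell<1$. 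Then, for all sufficiently large $k$, the interval $(\gamma_k,\gamma_k+\ell)$ contains a positive integer precisely when $\{\gamma_k\}\in(1-\ell,1)$. Writing $x_0=(\beta/\lambda)\bmod1$ and $T\colon[0,1)\to[0,1)$, $Tx=px\bmod1$, we have $\{\gamma_k\}=\{T^kx_0+\log A_0/\lambda\}$, so the condition reads $T^kx_0\in J$, where $J\subseteq[0,1)$ is the arc of length $\ell$ obtained by translating $(1-\ell,1)$ by $-\log A_0/\lambda$ mod $1$. As observed just after the statement, a bounded value of $k$ contributes only finitely many pairs, so ``infinitely many $0<k<n$'' is the same as ``$T^kx_0\in J$ for infinitely many $k$''.

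For the full-measure claim I would use the ergodicity of $T$ for Lebesgue measure. Let $W=\{x\in[0,1):T^kx\notin J\text{ for all }k\geq0\}$; since $x\in W$ implies $Tx\in W$, one has $W\subseteq T^{-1}W$, and measure preservation forces $W=T^{-1}W$ mod $0$, so $|W|\in\{0,1\}$ by ergodicity; as $W\cap J=\varnothing$ and $|J|=\ell>0$ we get $|W|=0$. Hence $\bigcup_{N\geq0}T^{-N}W$, the set of starting points whose orbit meets $J$ only finitely often, is null. Finally $b\mapsto x_0=(-\log b/\lambda)\bmod1$ is piecewise smooth with nowhere-vanishing derivative, hence non-singular, so the preimage of a null set is null; adjoining $\{0,1\}$ we conclude that the set of $b\in[0,1]$ failing the conclusion is Lebesgue-null.

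For the topological claim, note that the good set is $\bigcap_{N\geq1}V_N$ with $V_N:=\bigcup_{k\geq N}\bigcup_{n>k}\{b\in[0,1]:A_0<b^{p^k}/\si^{n-k}<A_1\}$, each $V_N$ open; by Baire it suffices that each $V_N$ is dense. Given a subinterval $(a_1,a_2)\subseteq(0,1)$, the decisive point is that, although $(a_1^{p^k},a_2^{p^k})$ collapses to $\{0\}$ as $k\to\infty$, its logarithm $(p^k\log a_1,p^k\log a_2)$ has length $p^k\log(a_2/a_1)\to\infty$. Choose $k\geq N$ so large that this length exceeds $\lambda+\log(A_1/A_0)$; then the interval must contain a translate $m\lambda+(\log A_0,\log A_1)$ for some integer $m$, and since the interval lies far out on the negative axis while $\lambda>0$, that $m$ is a large positive integer. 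Taking $n=k+m$ and any $b\in(a_1,a_2)$ with $p^k\log b$ in that translate gives $A_0<b^{p^k}/\si^{n-k}<A_1$, so $V_N\cap(a_1,a_2)\neq\varnothing$. Thus every $V_N$ is dense open and $\bigcap_NV_N$ is a dense $G_\delta$, which by the previous paragraph also has full Lebesgue measure.

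The argument is mostly bookkeeping once the logarithmic change of variables is set up. The points needing care are the open/closed endpoint conventions in passing from ``$(\gamma_k,\gamma_k+\ell)$ meets $\Z$'' to ``$\{\gamma_k\}\in(1-\ell,1)$'', the constraint $m=n-k\geq1$, and the non-singularity of $b\mapsto x_0$ used to transport the full-measure statement from $x_0$ back to $b$. The essential idea behind density is precisely the growth $p^k\log(a_2/a_1)\to\infty$, which prevents the shrinking target windows from slipping through the gaps between the translates $m\lambda+(\log A_0,\log A_1)$.
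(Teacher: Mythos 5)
The paper does not prove this statement---it is quoted verbatim from \cite{HLM} and used as a black box---so there is no internal proof to compare against; I can only assess your argument on its own terms, and it is correct and complete. Your reduction is the natural one: in logarithmic coordinates the condition $A_0<b^{p^k}/\si^{n-k}<A_1$ becomes the statement that the orbit of $x_0=\{-\log b/\lambda\}$ under the $\times p$ map hits a fixed arc $J$ of length $\ell=\log(A_1/A_0)/\lambda$, after which full measure follows from ergodicity of $\times p$ (your invariance argument $W\subseteq T^{-1}W$ with $|W|\le 1-\ell$ is clean, and the transport back to the $b$-variable via local Lipschitz bounds on $t\mapsto e^{-\lambda t}$ is handled correctly), while the $G_\de$ structure and density follow from the openness of the defining conditions and the growth of $p^k\log(a_2/a_1)$. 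All the delicate points (the equivalence of ``infinitely many pairs'' with ``infinitely many $k$'', the restriction to positive $m=n-k$, the endpoint conventions for $\{\gamma_k\}\in(1-\ell,1)$) are addressed. The only blemish is a sign slip in the density paragraph: the translates your interval $(p^k\log a_1,\,p^k\log a_2)$ must capture are $m\log\si+(\log A_0,\log A_1)=-m\lambda+(\log A_0,\log A_1)$ with $m\ge 1$, not $m\lambda+(\log A_0,\log A_1)$; as written, ``$m$ is a large positive integer'' contradicts ``the interval lies far out on the negative axis while $\lambda>0$''. With that sign corrected the argument reads exactly as intended and the conclusion $n=k+m$ is valid.
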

\nin Unbounded geometry is almost everywhere property in the parameter set of $ b_1 $ for every fixed  $ b_2 $. By Corollary \ref{cor-comparison sigma n-k with b-1}, the condition of the overlapping of two adjacent boxes, $ B^{n-k}_{{\bf v}v}(R^kF) $ and $ B^{n-k}_{{\bf v}c}(R^kF) $ on the $ x- $axis implies that 
$$ \si^{n-k} \asymp b_1^{2^k} $$
for infinitely many $ k $ and $ n $. Then
\begin{thm} \label{unbounded geometry with b-1}
Let $ F_{b_1} $ be an element of parametrized space in $ \NN \cap \II_B(\oeps) $ with $ b_1 = b_F/b_{\Bz} $. Then there exists a small interval $ [0, b_{\bullet}] $ for which there exists a $ G_{\de} $ subset $ S \subset [0, b_{\bullet}] $ with full Lebesgue measure such that the critical Cantor set, $ \OO_{F_{b_1}} $  has unbounded geometry for all $ b_1 \in S $. 
\end{thm}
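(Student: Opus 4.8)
The plan is to deduce the statement from the Proposition immediately preceding it together with the cited arithmetic result of \cite{HLM}, the only genuine work being to pin down constants that are uniform over the parameter interval. Recall that the preceding Proposition shows: for $F_{b_1}\in\NN\cap\II_B(\oeps)$, if there are infinitely many pairs $k<n$ with $b_1^{2^k}\asymp\si^{n-k}$ (with a \emph{fixed} implied ratio), then for such pairs the adjacent boxes $B^{n-k}_{{\bf v}v}(R^kF_{b_1})$ and $B^{n-k}_{{\bf v}c}(R^kF_{b_1})$ overlap on the $x$-axis, and then Lemma \ref{upper bound the distance of boxes} and Lemma \ref{lower bounds of the diameter of boxes} give
$$ \dist_{\min}(B^n_{{\bf w}v}, B^n_{{\bf w}c}) \leq C\,\si^k\,\diam(B^n_{{\bf w}v}) $$
for $ {\bf w}=v^kc\,v^{n-k-1} $, so the separation-to-diameter ratio tends to $0$ along these levels, which is incompatible with bounded geometry. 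Hence it suffices to produce, for a full-measure $G_\de$ set of $b_1$ in a small interval, infinitely many such pairs.

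First I would fix the interval. Choose $b_\bullet>0$ small enough that: (i) the parametrized family $b_1\mapsto F_{b_1}$ is defined on $[0,b_\bullet]$ with each $F_{b_1}\in\NN\cap\II_B(\oeps)$ and $b_1=b_F/b_{\Bz}$; (ii) $b_\bullet<\oeps^2$, so the threshold $A=A(b_1,\oeps)$ of Proposition \ref{prop-estimation of t-n-k by b-1} is the second branch and, more to the point, is bounded on $[0,b_\bullet]$; (iii) all the $\asymp$- and $O$-constants appearing in Proposition \ref{prop-estimation of t-n-k by b-1}, Corollary \ref{cor-comparison sigma n-k with b-1}, Lemma \ref{upper bound the distance of boxes} and Lemma \ref{lower bounds of the diameter of boxes} — which a priori depend only on $B$ and $\oeps$ — are uniform for $b_1\in[0,b_\bullet]$. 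From these one extracts absolute constants $0<A_0<A_1$ such that whenever $A_0<b_1^{2^k}/\si^{n-k}<A_1$ with $n\geq k+A(b_1)$, the $x$-axis overlap of $B^{n-k}_{{\bf v}v}$ and $B^{n-k}_{{\bf v}c}$ occurs and the distance/diameter comparison above holds with a constant independent of $k,n,b_1$.

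Next I would invoke the theorem of \cite{HLM} with $p=2$, the renormalization scaling $0<\si<1$, and the constants $A_0,A_1$ just chosen: the set of $b\in[0,1]$ for which there are infinitely many $0<k<n$ with $A_0<b^{2^k}/\si^{n-k}<A_1$ is a dense $G_\de$ set of full Lebesgue measure. Intersecting with $[0,b_\bullet]$ gives a set $S\subset[0,b_\bullet]$ that is still $G_\de$ and of full Lebesgue measure in $[0,b_\bullet]$. For $b_1\in S$ there are infinitely many pairs $k<n$ with $b_1^{2^k}\asymp\si^{n-k}$; since $A(b_1)$ is finite, all but finitely many of these satisfy $n\geq k+A(b_1)$, so the overlap and the comparison hold along an infinite sequence of levels. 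By the preceding Proposition, $\OO_{F_{b_1}}$ then has unbounded geometry, which is the assertion.

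The main obstacle is step two: checking that the implied constants throughout Proposition \ref{prop-estimation of t-n-k by b-1}, Corollary \ref{cor-comparison sigma n-k with b-1} and Lemmas \ref{upper bound the distance of boxes}–\ref{lower bounds of the diameter of boxes} can be taken uniform in $b_1\in[0,b_\bullet]$, and that the threshold $A(b_1)$ — which grows like $\log_2(\log b_1/\log\oeps-1)$ as $b_1\downarrow 0$ — does not spoil matters. The first point is handled by shrinking $b_\bullet$ and tracking the dependence of each estimate through the chain of results in Section \ref{sec-recursive formula of Psi-n-k} and the horizontal-overlap subsection; the second is benign, since $A(b_1)$ is finite for each fixed $b_1$ and only cofinitely many of the infinitely many \cite{HLM} pairs are needed. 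Everything else is a direct citation.
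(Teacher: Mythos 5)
Your proposal is correct and follows essentially the same route as the paper: the theorem is obtained by combining the Proposition immediately preceding it (infinitely many pairs $k<n$ with $b_1^{2^k}\asymp\si^{n-k}$ force the horizontal overlap and hence $\dist_{\min}\leq C\si^k\diam$, i.e.\ unbounded geometry) with the quoted arithmetic theorem of \cite{HLM} producing the full-measure $G_\de$ set of such parameters. Your additional attention to the uniformity of the implied constants over $[0,b_\bullet]$ and to the finiteness of the threshold $A(b_1)$ only makes explicit what the paper leaves implicit.
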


\bsk

\section{Non rigidity on the critical Cantor set}

Let $ F $ and $ \widetilde F $ be H\'enon-like maps in $ \NN \cap \II_B(\bar \eps) $. Let the universal number $ b_1 $ and $ \widetilde b_1 $ are for the map $ F $ and $ \widetilde F $. Non rigidity on the Cantor set with respect to the universal constant $ b_1 $ means that the homeomorphism between critical Cantor sets, $ \OO_F $ and $ \OO_{\widetilde F} $ is at most $ \alpha- $H\"older continuous with a constant $ \alpha < 1 $ (Theorem \ref{Non rigidity with b-1} below). This kind of non rigidity phenomenon is a generalization of two dimensional non rigidity theorem in \cite{CLM}. However, non rigidity theorem in three or higher dimension only depends essentially on the contracting rate $ b_1 $ from two dimensional H\'enon-like map in higher dimension.

\subsection{Bounds of the distance between two points}
Let us consider the box 
$$ B^n_{\bf w} = \Psi^k_0 \circ F_k \circ \Psi^n_k (B) $$ 
where $ B = B(R^nF) $. Since $ \diam B(R^nF) \asymp \diam B^1_v(R^nF) $, 
by Lemma \ref{lower bounds of the diameter of boxes} we have the estimation of the lower bound of $ \diam B(R^nF) $ as follows
\msk
\begin{equation} \label{lower bound of the distance of two points}
\begin{aligned}
\diam (B^{n}_{{\bf w}}) \geq \big| \, C_1 \, \si^k \si^{2(n-k)} - C_2\, \si^k \si^{n-k}b_1^{2^k} \big|
\end{aligned} \msk
\end{equation}
where $ {\bf w} = v^kc\,v^{n-k-1} \in W^n $ for some constants $ C_1 $ and $ C_2 $. Let us estimate the upper bound of the distance. 

\begin{lem} \label{upper bound of the distance of two points}
Let $ F \in {\NN} \cap \II_B(\bar \eps) $. 
Then 
$$  \diam (B^{n}_{{\bf w}}) \leq C \, \big[\,\si^k \si^{2(n-k)} + \si^k \si^{n-k} b_1^{2^k} \,\big] $$
where $ {\bf w} = v^kc\,v^{n-k-1} \in W^n $ for some $ C>0 $.
\end{lem}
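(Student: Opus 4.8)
The plan is to run exactly the computation of Lemma~\ref{upper bound the distance of boxes}, but with the overlap hypothesis dropped. Write $B^n_{\bf w}=\Psi^k_{0,\,\vv}\circ F_k\circ\Psi^n_{k,\,\vv}(B)$ with $B=B(R^nF)$ and ${\bf w}=v^kc\,v^{n-k-1}$, and, since $\diam B(R^nF)\asymp\diam B^1_v(R^nF)$, it suffices to bound $\|\,\dddot w_1-\dddot w_2\|$ for two points $w_1,w_2\in B^1_v(R^nF)\cap\OO_{R^nF}$ traced as $\dot w_j=\Psi^n_k(w_j)$, $\ddot w_j=F_k(\dot w_j)$, $\dddot w_j=\Psi^k_0(\ddot w_j)$, just as in Lemma~\ref{lower bounds of the diameter of boxes}. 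Because $w_1,w_2\in\OO_{R^nF}$ one has $\|\,\Bz_1-\Bz_2\|=O(\oeps^{2^n})$ by \eqref{z distance of two points}, and the finitely many cases with small $n-k$ are absorbed into the constant, so I may assume $n\ge k+A$ with $A$ as in Proposition~\ref{prop-estimation of t-n-k by b-1}.

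First I would estimate $\dot w_1-\dot w_2$. From the matrix form of $\Psi^n_k$, using $\alpha_{n,k}\asymp\si^{2(n-k)}$, $\si_{n,k}\asymp\si^{n-k}$, $t_{n,k}\asymp b_1^{2^k}$ (Proposition~\ref{prop-estimation of t-n-k by b-1}), $\|\Bu_{n,k}\|,\|\Bd_{n,k}\|=O(\oeps^{2^k})$ and $\|{\bf R}_{n,k}\|,\|({\bf R}_{n,k})'\|=O(\si^{n-k}\oeps^{2^k})$, the computation leading to \eqref{lower estimate of x dot distance} gives $|\dot x_1-\dot x_2|\le C(\si^{2(n-k)}+\si^{n-k}b_1^{2^k})$ once $n\ge k+A$ is used to absorb $\oeps^{2^k}\oeps^{2^n}\lesssim b_1^{2^k}$, while $|\dot y_1-\dot y_2|\asymp\si^{n-k}$ and $\|\,\dot\Bz_1-\dot\Bz_2\|=O(\si^{n-k}\oeps^{2^k})$, so $\|\,\dot w_1-\dot w_2\|=O(\si^{n-k})$.

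The crucial step is passing through $F_k$. One must \emph{not} use the crude bound $|\eps_k(\dot w_1)-\eps_k(\dot w_2)|\le\|D\eps_k\|\,\|\,\dot w_1-\dot w_2\|=O(\oeps^{2^k}\si^{n-k})$: after the magnification by $\alpha_{k,0}\asymp\si^{2k}$ inside $\Psi^k_0$ this leaves a term $\si^{2k}\oeps^{2^k}\si^{n-k}$ which in general (when $b_1<\oeps$) cannot be absorbed into $\si^k\si^{2(n-k)}+\si^k\si^{n-k}b_1^{2^k}$. Instead, exactly as in \eqref{distance of ddot x-1 and ddot x-2}--\eqref{ddot x distance estimation}, expand $\ddot x_1-\ddot x_2=[f_k(\dot x_1)-f_k(\dot x_2)]-[\eps_k(\dot w_1)-\eps_k(\dot w_2)]$ by the mean value theorem, extract the combination $\di_y\eps_k+E_k\cdot\sum_i\Bq_i$, and invoke Lemma~\ref{lem-di-y eps and q asymptotic from n to k} (where $F\in\NN$ and the results of Sections~3--5 enter) to get $|\eps_k(\dot w_1)-\eps_k(\dot w_2)|\le C\si^{n-k}[\,b_1^{2^k}+\oeps^{2^k}\si^{n-k}+\oeps^{2^k}\oeps^{2^n}\,]\le C(\si^{2(n-k)}+\si^{n-k}b_1^{2^k})$; the extra term $f_k(\dot x_1)-f_k(\dot x_2)=f_k'(\,\cdot\,)(\dot x_1-\dot x_2)$, present now that overlap is not assumed, is $O(|\dot x_1-\dot x_2|)$, of the same order. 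Likewise $\ddot\Bz_1-\ddot\Bz_2=\bde_k(\dot w_1)-\bde_k(\dot w_2)$ is treated via \eqref{distance of ddot z-1 and ddot z-2}--\eqref{ddot z distance estimation} using Corollary~\ref{cor-Y-k Z-k and q estimation} (again using $F\in\NN$), yielding $\|\,\ddot\Bz_1-\ddot\Bz_2\|\le C\si^{n-k}[\,\si^{n-k}\oeps^{2^k}+\oeps^{2^k}\oeps^{2^n}\,]\le C(\si^{2(n-k)}+\si^{n-k}b_1^{2^k})$; and $\ddot y_1-\ddot y_2=\dot x_1-\dot x_2$ by the H\'enon-like form. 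Hence $\|\,\ddot w_1-\ddot w_2\|\le C(\si^{2(n-k)}+\si^{n-k}b_1^{2^k})$.

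Finally I would push through $\Psi^k_0$. The $y$-component gives $\dddot y_1-\dddot y_2=\si_{k,0}(\ddot y_1-\ddot y_2)=\si_{k,0}(\dot x_1-\dot x_2)$, of size $\le C\si^k(\si^{2(n-k)}+\si^{n-k}b_1^{2^k})$ --- precisely the term that produced the matching lower bound in Lemma~\ref{lower bounds of the diameter of boxes}. For $\dddot x_1-\dddot x_2$ and $\dddot\Bz_1-\dddot\Bz_2$ I would plug into the matrix form of $\Psi^k_0$, using $\alpha_{k,0}\asymp\si^{2k}$, $\si_{k,0}\asymp\si^k$, $t_{k,0},\|\Bu_{k,0}\|,\|\Bd_{k,0}\|=O(\oeps)$, $\|{\bf R}_{k,0}\|,\|({\bf R}_{k,0})'\|=O(\si^k\oeps)$, and $x+S^k_0(w)=v_*(x)+O(\oeps+\rho^k)$ with correspondingly small derivative; every resulting contribution is bounded by $C\si^k(\si^{2(n-k)}+\si^{n-k}b_1^{2^k})$. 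Adding the three coordinates gives $\diam(B^n_{\bf w})\le C[\,\si^k\si^{2(n-k)}+\si^k\si^{n-k}b_1^{2^k}\,]$. The main obstacle is the third paragraph: keeping the $b_1^{2^k}$-refinement alive when $\eps_k$ and $\bde_k$ are differentiated along the thin box $\Psi^n_{k,\,\vv}(B)$ near the tip --- resisting the temptation to bound these increments by the generic $O(\oeps^{2^k})$ size of $D\eps_k$, $D\bde_k$, and routing them instead through Lemma~\ref{lem-di-y eps and q asymptotic from n to k} and Corollary~\ref{cor-Y-k Z-k and q estimation}. All remaining steps are bookkeeping with the error estimates collected at the end of Section~\ref{sec-recursive formula of Psi-n-k}.
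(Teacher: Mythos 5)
Your proposal is correct and follows essentially the same route as the paper's proof: the same decomposition $\Psi^k_0\circ F_k\circ\Psi^n_k$ with the tracked points $\dot w_j,\ddot w_j,\dddot w_j$, the same use of Proposition \ref{prop-estimation of t-n-k by b-1} for $t_{n,k}\asymp b_1^{2^k}$, and crucially the same routing of the increments of $\eps_k$ and $\bde_k$ through Lemma \ref{lem-di-y eps and q asymptotic from n to k} and Corollary \ref{cor-Y-k Z-k and q estimation} rather than the crude $\|D\eps_k\|$ bound, with the extra $f_k'(\bar x)(\dot x_1-\dot x_2)$ term absorbed exactly as in \eqref{ddot x distance upper bound}. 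Your explicit remark that the finitely many cases $n<k+A$ are absorbed into the constant is a small point the paper leaves implicit, but otherwise the arguments coincide.
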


\begin{proof}
Recall the map $ \Psi^n_k $ from $ B(R^nF) $ to $ B^{n}_{{\bf v}}(R^kF) $.\msk
\begin{align*}
\Psi^n_k(w) = 
\begin{pmatrix}
1 & t_{n,\,k} & \Bu_{n,\,k} \\[0.2em]
& 1 & \\
& \Bd_{n,\,k} & \Id_{m \times m}
\end{pmatrix}
\begin{pmatrix}
\alpha_{n,\,k} & & \\
& \si_{n,\,k} & \\
& & \si_{n,\,k} \cdot \Id_{m \times m}
\end{pmatrix}
\begin{pmatrix}
x + S^n_k(w) \\
y \\[0.2em]
z + {\bf R}_{n,\,k}(y)     \msk
\end{pmatrix} .
\end{align*}
\\
where $ {\bf v} = v^{n-k} \in W^{n-k} $. Let us \ssk choose the two points 
$$ w_1 = (x_1,\,y_1,\,\Bz_1) \in B^1_v(R^nF) \cap \OO_{R^nF}, \quad w_2 = (x_2,\,y_2,\,\Bz_2) \in B^1_c(R^nF) \cap \OO_{R^nF}. $$
Recall $ \dot w_j = \Psi^n_k(w_j) $, $ \ddot w_j = F_k(\dot w_j) $ and $ \dddot w_j = \Psi^k_0(\ddot w_j) $ for $ j =1,2 $. 
\ssk Observe that $ | \:\! x_1 - x_2 | $ and $ | \:\! y_1 - y_2| $ is $ O(1) $. We may assume that $ \| \:\! \Bz_1 - \Bz_2 \| = O(\bar \eps^{2^n}) $ because $ \OO_{R^nF} $ is a completely invariant set under $ R^nF $. 
By Corollary \ref{cor-comparison sigma n-k with b-1} and the equation \eqref{lower estimate of x dot distance}, we have \msk
\begin{equation} \label{distance of x coordinate under Psi-n-k}
\begin{aligned}
\dot x_1 - \dot x_2 
&= \ \alpha_{n,\,k} \big[ \big(x_1 + S^n_k(w_1) \big) - \big(x_2 + S^n_k(w_2)\big) \big] \\
&\qquad + \si_{n,\,k} \big[\, t_{n,\,k}(y_1 - y_2) + \Bu_{n,\,k} \cdot \big \{ \Bz_1 - \Bz_2 + {\bf R}_{n,\,k}(y_1) - {\bf R}_{n,\,k}(y_2) \big \} \big] \\[0.4em]
&= \ \alpha_{n,\,k} \big[\,v_*'(\bar x) + O(\bar \eps^{2^k} + \rho^{n-k}) \big] (x_1 - x_2) \\
&\qquad + \si_{n,\,k} \big[\; t_{n,\,k}(y_1 - y_2) + \Bu_{n,\,k} \cdot \big \{ \Bz_1 - \Bz_2 + {\bf R}_{n,\,k}(y_1) - {\bf R}_{n,\,k}(y_2) \big \} \big] \\[0.4em]
&\leq \ C\, \big[\, \si^{2(n-k)} + \si^{n-k} b_1^{2^k} + \si^{2(n-k)}\bar \eps^{2^k}\,\big]
\end{aligned} \msk
\end{equation}
for some $ C>0 $. Moreover, \ssk
\begin{equation*} 
\begin{aligned}
\dot y_1 - \dot y_2 &= \ \si_{n,\,k} (y_1 - y_2) \\[0.3em]
\dot \Bz_1 - \dot \Bz_2 &= \ \si_{n,\,k} \big[\, \Bd_{n,\,k}(y_1 -y_2) + \Bz_1 - \Bz_2 + {\bf R}_{n,\,k}(y_1) - {\bf R}_{n,\,k}(y_2)\, \big] .
\phantom{*****}
\end{aligned} \msk
\end{equation*}

\nin By the equation \eqref{distance of ddot x-1 and ddot x-2}, we estimate the distance between each coordinates of $ F_k(\dot w_1) $ and $ F_k(\dot w_2) $ as follows \ssk
\begin{align*}
\ddot{x}_1 - \ddot{x}_2 &= \ f_k(\dot x_1) - \eps_k(\dot w_1) - [f_k(\dot x_2) - \eps_k(\dot w_2)] \\[0.2em]
&= \ f'_k(\bar x)\cdot(\dot x_1 - \dot x_2) - \eps_k(\dot w_1 ) + \eps_k(\dot w_2)\\[0.3em]
&= \ [\,f'_k(\bar x) - \di_x \eps_k(\eta) \,]\cdot(\dot x_1 - \dot x_2)- \di_y \eps_k(\eta) \cdot (\dot{y_1} - \dot{y_2}) - E_k(\eta) \cdot (\dot \Bz_1 - \dot \Bz_2 ) \\[0.4em]
&= \ [\,f_k'(\bar x) - \di_x \eps_k(\eta) \,]\cdot(\dot x_1 - \dot x_2) - \di_y \eps_k(\eta)  \cdot  \si_{n,\,k} (y_1 -y_2) \\[0.2em]
& \qquad - E_k(\eta) \cdot \si_{n,\,k} \big[\, \Bd_{n,\,k}(y_1 -y_2) + \Bz_1 - \Bz_2 + {\bf R}_{n,\,k}(y_1) - {\bf R}_{n,\,k}(y_2)\, \big] \\[0.4em]
\ddot{y}_1 - \ddot{y}_2 &= \ \dot x_1 - \dot x_2  \\[0.5em]
\ddot{\Bz}_1 - \ddot{\Bz}_2 &= \ \bde_k(\dot{w}_1) - \bde_k(\dot{w}_2) \\[0.3em]
&= \ X_k(\zeta) \cdot(\dot x_1 - \dot x_2) + Y_k(\zeta) \cdot (\dot y_1 - \dot y_2 ) + Z_k(\zeta) \cdot (\dot \Bz_1 - \dot \Bz_2 ) \\[0.3em]
&= \ X_k(\zeta) \cdot(\dot x_1 - \dot x_2) + Y_k(\zeta) \cdot \si_{n,\,k} (y_1 -y_2) \\
& \qquad + Z_k(\zeta) \cdot \si_{n,\,k} \big[\, \Bd_{n,\,k}(y_1 -y_2) + \Bz_1 - \Bz_2 + {\bf R}_{n,\,k}(y_1) - {\bf R}_{n,\,k}(y_2)\, \big]
\end{align*} 
where $ \eta $ and $ \zeta $ are some points in the line segment between $ \dot{w}_1 $ and $ \dot{w}_2 $ in $ \Psi^n_k(B) $. The equations \eqref{distance of ddot x-1 and ddot x-2} and \eqref{ddot x distance estimation} in Lemma \ref{upper bound the distance of boxes} implies that
\begin{equation} \label{ddot x distance upper bound}
\begin{aligned}
| \, \ddot x_1 - \ddot x_2 | \leq |\,f_k'(\bar x) - \di_x \eps_k(\eta) \,| \cdot | \, \dot x_1 - \dot x_2 | + C_2 \, \si^{n-k}\big[\, b_1^{2^k} + \bar \eps^{2^k}\si^{n-k} + \bar \eps^{2^k}\bar \eps^{2^n} \,\big] 
\end{aligned} \msk
\end{equation}
and the equations \eqref{distance of ddot z-1 and ddot z-2} and \eqref{ddot z distance estimation} in the same Lemma implies that
\msk
\begin{equation} \label{ddot z distance upper bound}
\begin{aligned}
\| \, \ddot \Bz_1 - \ddot \Bz_2 \| \leq \|\,X_k(\zeta) \| \cdot | \, \dot x_1 - \dot x_2 | + C_3\,\si^{n-k} \big[\, \si^{n-k} \bar \eps^{2^k} + \bar \eps^{2^k} \bar \eps^{2^n} \,\big] .
\end{aligned} \msk
\end{equation}
Then the difference of each coordinates of $ \Psi^k_0(\ddot w_1) $ and $ \Psi^k_0(\ddot w_2) $ as follows
\msk
\begin{equation} 
\begin{aligned}
\dddot x_1 - \dddot x_2 &= \ \pi_x \circ \Psi^k_0(\ddot w_1) - \pi_x \circ \Psi^k_0(\ddot w_2) \\[0.3em]
&= \ \alpha_{k,\,0} \big[\,(\ddot x_1 + S^k_0( \ddot w_1)) - (\ddot x_2 + S^k_0( \ddot w_2)) \,\big] \\
&\qquad + \si_{k,\,0}\, \big[\, t_{k,\,0}\,(\ddot y_1 - \ddot y_2 ) + \Bu_{k,\,0} \cdot \big( \ddot \Bz_1 -\ddot \Bz_2 + {\bf R}_{k,\,0}( \ddot y_1) - {\bf R}_{k,\,0}( \ddot y_2) \big)\, \big] \\[0.3em]
&= \ \alpha_{k,\,0} \big[\, v_*'(\bar x) + O(\bar \eps + \rho^k) \,\big](\ddot x_1 - \ddot x_2) + \si_{k,\,0} \cdot \Bu_{k,\,0}\cdot ( \ddot \Bz_1 -\ddot \Bz_2) \\
&\qquad + \si_{k,\,0}\, \big[\, t_{k,\,0}\,(\dot x_1 - \dot x_2 ) + \Bu_{k,\,0} \cdot \big( {\bf R}_{k,\,0}(\dot x_1) - {\bf R}_{k,\,0}( \dot x_2) \big)\, \big]
\end{aligned}
\end{equation}
\begin{equation}
\begin{aligned}
\dddot y_1 - \dddot y_2 &= \ \si_{k,\,0}\,(\ddot y_1 - \ddot y_2 ) = \si_{k,\,0}\,(\dot x_1 - \dot x_2 ) \\[0.4em] 
\dddot \Bz_1 - \dddot \Bz_2 &= \ \pi_z \circ \Psi^k_0(\ddot w_1) - \pi_z \circ \Psi^k_0(\ddot w_2) \\[0.2em]
&= \ \si_{k,\,0}\,(\ddot \Bz_1 - \ddot \Bz_2 ) +  \si_{k,\,0} \big[\, \Bd_{k,\,0}(\ddot y_1 - \ddot y_2) + {\bf R}_{n,\,k}(\ddot y_1) - {\bf R}_{n,\,k}(\ddot y_2)\, \big] 
\\[0.2em]
&= \ \si_{k,\,0}\,(\ddot \Bz_1 - \ddot \Bz_2 ) + \si_{k,\,0} \big[\, \Bd_{k,\,0}(\dot x_1 - \dot x_2) + {\bf R}_{n,\,k}(\dot x_1) - {\bf R}_{n,\,k}(\dot x_2)\, \big] . \phantom{**\;\,}
\end{aligned} \msk
\end{equation}
Let us calculate a upper bound of the distance, $ \|\, \dddot w_1 - \dddot w_2 \| $. Applying the estimation \eqref{ddot x distance upper bound} and \eqref{ddot z distance upper bound}, we obtain that \msk
\begin{equation*}
\begin{aligned}
& \quad \ \ \|\, \dddot w_1 - \dddot w_2 \| \leq \ |\,\dddot x_1 - \dddot x_2 | + |\,\dddot y_1 - \dddot y_2 | + \|\,\dddot \Bz_1 - \dddot \Bz_2 \| \\[0.4em]
& \leq \ \big|\, \alpha_{k,\,0} \big[\, v_*'(\bar x) + O(\bar \eps + \rho^k) \,\big](\ddot x_1 - \ddot x_2) + \si_{k,\,0} \cdot \Bu_{k,\,0}\cdot ( \ddot \Bz_1 -\ddot \Bz_2) \\[0.2em]
&\qquad + \si_{k,\,0}\, \big[\, t_{k,\,0}\,(\dot x_1 - \dot x_2 ) + \Bu_{k,\,0} \cdot \big( {\bf R}_{k,\,0}(\dot x_1) - {\bf R}_{k,\,0}( \dot x_2) \big)\, \big] \:\!\big| 
\\[0.2em]
&\qquad + \big|\,\si_{k,\,0}\,(\dot x_1 - \dot x_2 ) \,\big| + \big\|\,\si_{k,\,0}\,(\ddot \Bz_1 - \ddot \Bz_2 ) + \si_{k,\,0} \big[\, \Bd_{k,\,0}(\dot x_1 - \dot x_2) + {\bf R}_{n,\,k}(\dot x_1) - {\bf R}_{n,\,k}(\dot x_2)\, \big] \:\!\big\| \phantom{***\;}
\end{aligned} \ssk
\end{equation*}

\begin{equation*}
\begin{aligned}
& \leq \ \big|\,\alpha_{k,\,0} \big[\, v_*'(\bar x) + O(\bar \eps + \rho^k) \,\big]\cdot |\,f'(\bar x) - \di_x \eps_k(\eta) \,| \cdot | \, \dot x_1 - \dot x_2 | \\[0.2em]
&\qquad + \big|\,\alpha_{k,\,0} \big[\, v_*'(\bar x) + O(\bar \eps + \rho^k) \,\big]\cdot C_2 \, \si^{n-k}\big[\, b_1^{2^k} + \bar \eps^{2^k}\si^{n-k} + \bar \eps^{2^k}\bar \eps^{2^n} \,\big]\:\!\big| \\[0.2em] 
&\qquad + \big|\,\si_{k,\,0}\, [\,1 + |\;\!t_{k,\,0}| + \| \;\!\Bd_{k,\,0} \| \,]\,(\dot x_1 - \dot x_2 ) \,\big| 
+ \big|\,\si_{k,\,0}\, [\,1 + \| \:\!\Bu_{k,\,0} \| \,] \:\! \big| \cdot \big\| \:\!({\bf R}_{n,\,k})'(\widetilde x) \cdot (\dot x_1 - \dot x_2) \big\| \\[0.2em]
&\qquad + \big|\,\si_{k,\,0}\, [\,1 + \| \:\!\Bu_{k,\,0} \| \,]\:\!\big| \cdot \big[\,\|\, X_k(\zeta) \| \cdot 
\big| \dot x_1 - \dot x_2 \big| + C_3\,\si^{n-k} \big( \si^n \bar \eps^{2^k} + \bar \eps^{2^k} \bar \eps^{2^n} \big)\:\!\big]  
\end{aligned} \ssk
\end{equation*}
After factoring out $ |\;\! \dot x_1 - \dot x_2 | $\,, the inequality continues as follows \ssk
\begin{equation*}
\begin{aligned}
& \leq \ C_5 \,\si^k |\,\dot x_1 - \dot x_2 | + C_5 \, \si^{2k}\si^{n-k}\big[\, b_1^{2^k} + \bar \eps^{2^k}\bar \eps^{2^n} \,\big] + C_6\,\si^k\si^{n-k} \big[\, \si^{n-k} \bar \eps^{2^k} + \bar \eps^{2^k} \bar \eps^{2^n} \:\!\big] \hspace{0.93in} \\[0.2em]
& \leq \ C_7\,\si^k \big[\, \si^{2(n-k)} + \si^{n-k} ( b_1^{2^k} + \bar \eps^{2^k} \bar \eps^{2^n} ) \,\big]  \\[0.2em]
& \qquad + C_5 \, \si^{2k}\si^{n-k}\big[\, b_1^{2^k} + \bar \eps^{2^k}\si^{n-k} + \bar \eps^{2^k}\bar \eps^{2^n} \,\big] + C_6\,\si^k\si^{n-k} \big[\, \si^{n-k}  \bar \eps^{2^k} +  \bar \eps^{2^k} \bar \eps^{2^n} \:\!\big] 
\end{aligned} \ssk
\end{equation*}
for some positive constants, $ C_j $, $ 2 \leq j \leq 7 $ which are independent of $ k $ and $ n $. The second last line holds by the estimation in \eqref{distance of x coordinate under Psi-n-k} and $ \| \:\!({\bf R}_{n,\,k})'\| $ in Proposition \ref{exponential smallness of R-n-k}. 
 Then the above estimation continues
\msk
\begin{equation*}
\begin{aligned}
 & \leq \ C_7\,\si^k \big[\, \si^{2(n-k)} + \si^{n-k} ( b_1^{2^k} + \bar \eps^{2^k} \bar \eps^{2^n} ) \,\big]  
+ C_5 \, \si^{2k}\si^{n-k}\big[\, b_1^{2^k} + \bar \eps^{2^k}\si^{n-k} + \bar \eps^{2^k}\bar \eps^{2^n} \,\big]  \hspace{0.93in}\\
 & \qquad + C_8\,\si^k\si^{2(n-k)} \bar \eps^{2^k} \\[0.3em]
 & \leq \ \big( C_7 + C_5\,\si^k\bar \eps^{2^k} + C_8\, \bar \eps^{2^k} \big)\, \si^k \si^{2(n-k)} + \big(C_7 + C_5\,\si^k)\, \si^k\si^{n-k}b_1^{2^k} 
 \\ & \qquad 
+ \big(C_7 + C_5\,\si^k \big)\, \si^k \si^{n-k} \bar \eps^{2^k}\bar \eps^{2^n}
\end{aligned} \bsk
\end{equation*}
for some positive constant $ C_8 $. Moreover, if $ n \geq k +A $ where $ A $ is the number defined in Proposition \ref{prop-estimation of t-n-k by b-1}, then 
$$ b_1^{2^k} \gtrsim \bar \eps^{2^k}\bar \eps^{2^n} . $$
Hence,
\begin{equation*}
\diam (B^{n}_{{\bf w}}) \leq C \, \big[\,\si^k \si^{2(n-k)} + \si^k \si^{n-k} b_1^{2^k} \,\big]
\end{equation*}
where $ {\bf w} = v^kc\,v^{n-k-1} \in W^n $ for some $ C>0 $.
\end{proof}
\msk
\begin{rem}
Lemma \ref{lower bounds of the diameter of boxes} and Lemma \ref{upper bound of the distance of two points} implies the lower and upper bounds of $ \diam B_{\bf w} $ where $ B_{\bf w} = \Psi^k_0 \circ F_k \circ \Psi^n_k (B(R^nF)) $ as follows
\begin{equation*}
C_0\,\si^k | \;\! \dot x_1 - \dot x_2 | \leq \diam B_{\bf w} \leq C_1 \,\si^k |\;\! \dot x_1 - \dot x_2 |
\end{equation*}
for every big enough $ k \in \N $, that is, \,$ \diam B_{\bf w} \asymp \si^k |\;\! \dot x_1 - \dot x_2 | $.
\end{rem}

\msk

\subsection{Non rigidity on the Cantor set with respect to $ b_1 $}

\begin{thm} \label{Non rigidity with b-1}
Let H\'enon-like maps $ F $ and $ \widetilde{F} $ be in $ \NN \cap \II_B(\bar \eps) $. 
Let $ b_1 $ be the ratio $ b_F/b_{\Bz} $ where $ b_F $ is the average Jacobian and $ b_{\Bz} $ for $ F $ is the number defined in Proposition \ref{prop-universal number b-Z}. The number $ \widetilde{b}_1 $ is defined by the similar way for the map $ \widetilde{F} $. Let $ \phi \colon \OO_{\widetilde{F}} \ra \OO_F $ be a homeomorphism \ssk which conjugate $ F_{\,\OO_F} $ and $ \widetilde{F}_{\,\OO_{\widetilde{F}}} $ and $ \phi(\tau_{\widetilde{F}}) = \tau_{F} $. If \,$ b_1 > \widetilde{b}_1 $, \ssk then the H\"older exponent of $ \phi $ is not greater than $ \displaystyle{\frac{1}{2} \left(1 + \frac{\log b_1}{ \log \widetilde{b}_1} \right)} $.
\end{thm}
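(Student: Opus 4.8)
The plan is to transport the canonical piece decomposition through $\phi$ and then read off the H\"older exponent from the two-sided diameter estimates established above (Lemma~\ref{lower bounds of the diameter of boxes} with its Remark, and Lemma~\ref{upper bound of the distance of two points}), evaluated along a carefully chosen sequence of words. The first step is to check that $\phi$ respects the piece decomposition, i.e.\ that $\phi(\OO_{\bf w}(\widetilde F))=\OO_{\bf w}(F)$ for every $n$ and every ${\bf w}\in W^n$. Both $(\OO_F,F)$ and $(\OO_{\widetilde F},\widetilde F)$ are topologically conjugate to the dyadic adding machine, and for each $n$ the level-$n$ pieces are precisely the atoms of the (unique) partition into $2^n$ clopen sets that the dynamics permutes cyclically; a conjugacy therefore carries level-$n$ pieces of $\widetilde F$ to level-$n$ pieces of $F$ preserving the cyclic order, and since $\tau_{\widetilde F}\in\OO_{v^n}(\widetilde F)$ and $\tau_F\in\OO_{v^n}(F)$ for all $n$, the hypothesis $\phi(\tau_{\widetilde F})=\tau_F$ forces the permutation to be the identity. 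Hence, if $\phi$ is $\alpha$-H\"older with constant $C$, then choosing two points of $\OO_{\bf w}(F)$ realizing $\diam\OO_{\bf w}(F)$ and pulling them back by $\phi^{-1}$ into $\OO_{\bf w}(\widetilde F)$ yields
$$\diam\OO_{\bf w}(F)\ \le\ C\,\big(\diam\OO_{\bf w}(\widetilde F)\big)^{\alpha}\qquad\text{for every word }{\bf w}.$$

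Next I would take ${\bf w}=v^kc\,v^{n-k-1}$ with $n\ge k+A$ and $k$ large. The extremal points used in Lemma~\ref{lower bounds of the diameter of boxes} and Lemma~\ref{upper bound of the distance of two points} lie in the Cantor set, so those results apply to $\OO_{\bf w}$ itself and give, for each map with its own universal number,
$$\diam\OO_{\bf w}(F)\ \asymp\ \si^{k}\,\si^{n-k}\,\max\{\si^{n-k},\,b_1^{2^k}\},\qquad \diam\OO_{\bf w}(\widetilde F)\ \asymp\ \si^{k}\,\si^{n-k}\,\max\{\si^{n-k},\,\widetilde b_1^{2^k}\}.$$
The decisive choice is to let $n=n(k)$ satisfy $\si^{n-k}\asymp\widetilde b_1^{2^k}$; this is possible for all large $k$, and then $n-k\asymp 2^k|\log\widetilde b_1|/|\log\si|\to\infty$, so the constraint $n\ge k+A$ of Proposition~\ref{prop-estimation of t-n-k by b-1} is automatic. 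Because $\widetilde b_1<b_1<1$, one gets $\max\{\si^{n-k},b_1^{2^k}\}\asymp b_1^{2^k}$ while $\max\{\si^{n-k},\widetilde b_1^{2^k}\}\asymp\widetilde b_1^{2^k}$, hence
$$\log\diam\OO_{\bf w}(F)=k\log\si+2^{k}\log\widetilde b_1+2^{k}\log b_1+O(1),\qquad \log\diam\OO_{\bf w}(\widetilde F)=k\log\si+2^{k+1}\log\widetilde b_1+O(1).$$

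Then, taking logarithms in the displayed H\"older inequality, dividing by the negative quantity $\log\diam\OO_{\bf w}(\widetilde F)$, and letting $k\to\infty$ so that $\log C$, the $O(1)$ errors, and the $k\log\si$ term all become negligible against the dominant $2^{k}$ terms, I would conclude
$$\alpha\ \le\ \lim_{k\to\infty}\frac{k\log\si+2^{k}\big(\log\widetilde b_1+\log b_1\big)}{k\log\si+2^{k+1}\log\widetilde b_1}\ =\ \frac{\log\widetilde b_1+\log b_1}{2\log\widetilde b_1}\ =\ \frac12\Big(1+\frac{\log b_1}{\log\widetilde b_1}\Big),$$
which is the asserted bound; the hypothesis $b_1>\widetilde b_1$ is exactly what makes this number lie strictly below $1$. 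A brief optimisation, treating $(n-k)|\log\si|$ as a free parameter against $2^{k}|\log b_1|$ and $2^{k}|\log\widetilde b_1|$, shows that the balance $\si^{n-k}\asymp\widetilde b_1^{2^k}$ in fact minimises the ratio of log-diameters, so this is the sharpest conclusion obtainable from these estimates.

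The hard part will not be conceptual but a matter of uniformity: one has to verify that the two-sided diameter asymptotics genuinely hold on the Cantor piece $\OO_{\bf w}$ (and not merely on the ambient box $B^n_{\bf w}$), that the multiplicative constants and the error terms $\bar\eps^{2^k}\bar\eps^{2^n}$, $\rho^k$, $\si^{n-k}\bar\eps^{2^k}$ appearing in Lemmas~\ref{upper bound the distance of boxes} and \ref{lower bounds of the diameter of boxes} are controlled uniformly enough that they wash out after taking $\log$ and dividing, and that the constant $A$ of Proposition~\ref{prop-estimation of t-n-k by b-1} does not interfere with the balance condition for large $k$. The identification $\phi(\OO_{\bf w}(\widetilde F))=\OO_{\bf w}(F)$, although standard for odometers, must be recorded carefully, since it is what licenses applying the H\"older inequality piece by piece.
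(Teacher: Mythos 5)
Your proposal is correct and follows essentially the same route as the paper: the same choice of word $ {\bf w} = v^kc\,v^{n-k-1} $, the same balancing condition $ \si^{n-k} \asymp \widetilde b_1^{2^k} $, the same two diameter lemmas for the upper bound $ \asymp \si^k (\widetilde b_1^{2^k})^2 $ and lower bound $ \asymp \si^k \widetilde b_1^{2^k} b_1^{2^k} $, and the same logarithmic limit computation. The only difference is that you explicitly justify the piece correspondence $ \phi(\OO_{\bf w}(\widetilde F)) = \OO_{\bf w}(F) $ via the adding-machine structure and the normalization $ \phi(\tau_{\widetilde F}) = \tau_F $, a step the paper's proof uses implicitly without recording it; this is a worthwhile addition rather than a deviation.
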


\begin{proof}
Let two points $ w_1 $ and $ w_2 $ be in $ B^1_v(R^nF) $ and $ B^1_c(R^nF) $ respectively. Similarly, assume that $ {\widetilde w}_1 $ and $ \widetilde{w}_2 $ are in $ B(R^n\widetilde{F}) $. Let us define
$ \dddot w_j = \Psi^k_0 \circ F_k \circ \Psi^n_k (w_j) $ for $ j=1,2 $. The points $ \dddot {\widetilde {w}}_1 $ and $ \dddot {\widetilde {w}}_2 $ are defined by the similar way. For sufficiently large $ k \in \N $, let us choose $ n $ depending on $ k $ which satisfies the following inequality
\begin{equation*}
\si^{n-k+1} \leq \widetilde{b}_1^{2^k} < \si^{n-k} .
\end{equation*}
Observe that $ b_1^{2^k} \gg \widetilde{b}_1^{2^k} $.
By Lemma \ref{lower bounds of the diameter of boxes} and Lemma \ref{upper bound of the distance of two points}, we have the following inequalities
\msk
\begin{equation*}
\begin{aligned}
\dist(\dddot {\widetilde {w}}_1, \dddot{\widetilde{w}}_2) &\leq C_0 \, \Big[\,\si^k \si^{2(n-k)} + \si^k \si^{n-k} \,\widetilde{b}_1^{2^k} \,\Big] \leq C_1 \,\si^k \,\widetilde{b}_1^{2^k}\, \widetilde{b}_1^{2^k}  \\[0.2em]
\dist(\dddot w_1, \dddot w_2) &\geq \big| \, C_2 \, \si^k \si^{2(n-k)} - C_3\, \si^k \si^{n-k}{b}_1^{2^k} \big| \geq C_4 \, \si^k \,\widetilde{b}_1^{2^k}\, b_1^{2^k}
\end{aligned} \msk
\end{equation*}
for some positive $ C_j $ where $ j =0,1,2,3 $ and $ 4 $. 
\nin H\"older continuous function, $ \phi $ with the H\"older exponent $ \alpha $ has to satisfy 
\begin{equation*}
\dist(\dddot w_1, \dddot w_2) \leq C \, \big( \dist(\dddot{\widetilde{w}}_1, \dddot{\widetilde{w}}_2) \big)^{\alpha}
\end{equation*}
for some $ C>0 $. Then we see that
\begin{equation*}
\si^k \,\widetilde{b}_1^{2^k}\, {b}_1^{2^k} \leq C \, \Big( \si^k \,\widetilde{b}_1^{2^k}\, \widetilde b_1^{2^k} \Big)^{\alpha}
\end{equation*}
\nin Take the logarithm both sides and divide them by $ 2^k $. After passing the limit, divide both sides by the negative number, $ 2\log \widetilde{b}_1 $. Then the desired upper bound of the H\"older exponent is obtained
\begin{equation*}
\begin{aligned}
k \log \si + 2^k \log \widetilde{b}_1 + 2^k \log {b}_1 &\leq \log C + \alpha \:\! \big( k \log \si + 2^k \log \widetilde{b}_1 + 2^k \log \widetilde b_1 \big) \\
\frac{k}{2^k} \log \si + \log \widetilde{b}_1 + \log {b}_1 &\leq \frac{1}{2^k} \log C + \alpha \left( \frac{k}{2^k} \log \si + \log \widetilde{b}_1 + \log \widetilde b_1 \right) \\
\log \widetilde{b}_1 + \log {b}_1 &\leq \alpha \cdot 2\log \widetilde b_1 \\
\alpha &\leq \frac{1}{2} \left(1 + \frac{\log b_1}{ \log \widetilde{b}_1} \right) .
\end{aligned}
\end{equation*}
\end{proof} \bsk

\nin 
The best possible regularity of homeomorphic conjugation between two critical Cantor set is unknown. In particular, the possible biggest H\"older exponent of $ \phi $ where $ F $ and $ \widetilde{F} $ are two dimensional H\'enon-like map and $ b_F = b_{\widetilde{F}} $. However, the average Jacobian of higher dimensional H\'enon-like map in $ \NN \cap \II_B(\bar \eps) $ less affects the non rigidity than the number $ b_1 $. In other words, in higher dimension we may less expect the rigidity between two different Cantor attractors.

\comm{********
\begin{example}
Let us consider a map $ F $ in $ \II_B(\bar \eps $ as follows
\begin{equation*}
F(w) = (f(x) - \eps(x,y),\, x,\, \bde(\Bz)) .
\end{equation*}
We call the three dimensional H\'enon-like map satisfying $ \bde(w) \equiv \bde(\Bz) $ {\em trivial extension} of two dimensional H\'enon-like map. Let the set of these maps be $ \TT $. It is worth to notify that $ \TT \cap \II_B(\bar \eps) \subset \NN \cap \II_B(\bar \eps) $ and $ \TT \cap \II_B(\bar \eps) $ a space which is invariant under renormalization. $ \TT $ is also contained in the set of toy model maps. Then if $ F \in \TT \cap \II_B(\bar \eps) $, then the $ n^{th} $ renormalized map of $ F $, $ F_n \equiv R^nF $ is of the following form
\msk
\begin{equation*}
\begin{aligned}
F_n(w) = \big(\,f_n(x) - a(x)\,b_1^{2^n}y\,(1 +O(\rho^n)),\ x,\ 
\bde(\Bz) \big) 
\end{aligned} \msk
\end{equation*} 
where $ b_1 $ is the average Jacobian of two dimensional map, $ \pi_{xy} \circ F $. 
Let $ b_{\Bz} $ be the logarithmic average of \,$ \det Z(w) $. Recall that \,$ \det Z_n(w) =  b_{\Bz}^{2^n}(1 + O(\rho^n)) $. Since $ F \in \TT \cap \II_B(\bar \eps) $, $ \Jac F_n = \di_y \eps_n \cdot \det Z_n $, that is, the average Jacobian $ b_F $ can be defined as $ b_1 b_{\Bz} $. 
Let $ \widetilde{F} $ be another map in $ \TT \cap \II_B(\bar \eps) $ with the corresponding numbers $ \widetilde{b}_1 $, $ \widetilde{b} $ and $ \widetilde{b}_2 $. By Theorem \ref{Non rigidity with b-1}, if $ b_1 > \widetilde{b}_1 $, the upper bound of H\"older exponent is 
$$ \frac{1}{2} \left(1 + \frac{\log b_1}{ \log \widetilde{b}_1} \right) $$
Let $ \bde $ and $ \widetilde{\bde} $ be $ \pi_{\Bz} \circ F $ and $ \pi_{\Bz} \circ \widetilde{F} $ respectively. 
The condition, $ b_{\Bz} \neq \widetilde{b}_{\Bz} $ may require the non rigidity of the homeomorphic conjugacy between critical Cantor sets of $ F $ and $ \widetilde{F} $ even if $ b_F = b_{\widetilde{F}} $. Assume that 
$$ b_1b_{\Bz} = b = \widetilde{b} = \widetilde{b}_1 \widetilde{b}_{\Bz} $$ 
Thus the condition $ b_{\Bz} \neq \widetilde{b}_{\Bz} $ implies either $ b_1 > \widetilde{b}_1 $ or $ b_1 < \widetilde{b}_1 $. Then Theorem \ref{Non rigidity with b-1} implies the non rigidity between Cantor attractors of $ F $ and $ \widetilde{F} $.
\end{example} 
****************}


\msk

\appendix

\section{Recursive formula of $ D\bde_1 $}

The pre-renormalization of H\'enon-like map,
$$ PRF = H \circ F^2 \circ H^{-1} $$
where 
$ H^{-1}(w) = (\phi^{-1}(w),\,y,\,\Bz + \bde(y,f^{-1}(y),\B0)) $. Let $ \text{Pre}\,\bde_1 $ be $ \pi_{\Bz} \circ PRF $. The renormalization of $ F $, $ RF $ is $ \La \circ PRF \circ \La^{-1} $. The third coordinate of $ RF $, $ \bde_1(w) $ is $ \displaystyle{ \frac{1}{\si_0}\; \text{Pre}\,\bde_1 \:\!(\si_0\;\! w) } $. Recall that $ \bde(w) = (\de^1(w),\; \de^2(w),\ldots,\;\de^m(w)) $. Thus each partial derivatives of $ j^{th} $ coordinate function $ \de^j(w) $ is each partial derivatives of $ \text{Pre}\,\de_1^j $ at $ \si_0\:\! w $. 
\ssk
\begin{lem} \label{lem-recursive formula of delta}
Let $ F $ be the $ m+2 $ dimensional renormalizable H\'enon-like map. Let $ \pi_{\Bz} \circ F  = \bde $ and $ \pi_{\Bz} \circ RF  = \bde_1 $. Then
\begin{align*}
\di_x \de^j_1(w) &= \Big[\, \di_y \de^j \circ \psi^1_c(w) + \sum_{l=1}^m \di_{z_l} \de^j \circ \psi^1_c(w) \cdot \di_x \de^l \circ \psi^1_v(w) \,\Big] \cdot \di_x \phi^{-1}( \si_0 w) \\[-0.3em]
& \quad \ + \di_x \de^j \circ \psi^1_c(w) - \frac{d}{dx}\,\de^j (\si_0 x,\,f^{-1}(\si_0 x), {\bf 0}) \\[0.4em]
\di_y \de^j_1(w) &= \Big[\, \di_y \de^j \circ \psi^1_c(w) + \sum_{l=1}^m \di_{z_l} \de^j \circ \psi^1_c(w) \cdot \di_x \de^l \circ \psi^1_v(w) \,\Big] \cdot \di_y \phi^{-1}( \si_0 w) \\[-0.4em]
& \quad \ + \sum_{l=1}^m \di_{z_l} \de^j \circ \psi^1_c(w) \cdot \Big[\, \di_y \de^j \circ \psi^1_v(w) + \sum_{i=1}^m \di_{z_i} \de^l \circ \psi^1_v(w) \cdot \frac{d}{dy}\,\de^i(\si_0y,f^{-1}(\si_0 y), {\bf 0}) \,\Big] \\[0.3em]
\di_{z_i} \de^j_1(w) &= \Big[\, \di_y \de^j \circ \psi^1_c(w) + \sum_{l=1}^m \di_{z_l} \de^j \circ \psi^1_c(w) \cdot \di_x \de^l \circ \psi^1_v(w) \,\Big] \cdot \di_{z_i} \phi^{-1}( \si_0 w) \\[-0.4em]
& \quad \ + \sum_{l=1}^m \di_{z_l} \de^j \circ \psi^1_c(w) \cdot \di_{z_i} \de^l \circ \psi^1_v(w)
\end{align*} \msk
for $ 1 \leq j \leq m $ and $ 1 \leq i \leq m $.
\end{lem}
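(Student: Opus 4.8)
The plan is to derive all three formulas from a single closed expression for $\bde_1$, obtained by unwinding the definition of $RF$, and then to differentiate that expression by the chain rule. First I would record how the scaling map $\La^{-1}$ and the horizontal–like change $H^{-1}$ act on points. Since $\psi^1_v=H^{-1}\circ\La^{-1}$ with $\La^{-1}(w)=\si_0 w$ and $H^{-1}(x,y,\Bz)=(\phi^{-1}(x,y,\Bz),\,y,\,\Bz+\bde(y,f^{-1}(y),\B0))$, for $w=(x,y,\Bz)$ one gets
\[
\pi_x\psi^1_v(w)=\phi^{-1}(\si_0 w),\qquad \pi_y\psi^1_v(w)=\si_0 y,\qquad \pi_{\Bz}\psi^1_v(w)=\si_0\Bz+\bde(\si_0 y,f^{-1}(\si_0 y),\B0),
\]
and then, using $\psi^1_c=F\circ\psi^1_v$ with $F(a,b,\Bc)=(f(a)-\eps(a,b,\Bc),\,a,\,\bde(a,b,\Bc))$ together with the inverse-function relation $\phi\circ H^{-1}=\pi_x$,
\[
\pi_x\psi^1_c(w)=\si_0 x,\qquad \pi_y\psi^1_c(w)=\phi^{-1}(\si_0 w),\qquad \pi_{\Bz}\psi^1_c(w)=\bde\circ\psi^1_v(w).
\]
The structural point to retain is that $\phi^{-1}(\si_0\,\cdot\,)$ appears twice here: once as $\pi_y\psi^1_c$, and once as the first coordinate of $\psi^1_v$, which feeds into $\pi_{\Bz}\psi^1_c=\bde\circ\psi^1_v$. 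This double occurrence is exactly what will produce the common boxed factor.

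Next I would establish the functional identity
\[
\si_0\,\bde_1(w)=\bde\big(\psi^1_c(w)\big)-\bde\big(\si_0 x,\,f^{-1}(\si_0 x),\,\B0\big).
\]
This follows from $PRF=H\circ F^2\circ H^{-1}$: since $H^{-1}(\si_0 w)=\psi^1_v(w)$ and $F^2\circ\psi^1_v=F\circ\psi^1_c$, we have $PRF(\si_0 w)=H\big(F(\psi^1_c(w))\big)$; applying $\pi_{\Bz}\circ H$, which at a point $(a,b,\Bc)$ equals $\Bc-\bde(b,f^{-1}(b),\B0)$, and using $\pi_y F(\psi^1_c(w))=\pi_x\psi^1_c(w)=\si_0 x$ together with $\pi_{\Bz}F(\psi^1_c(w))=\bde(\psi^1_c(w))$ gives $\pi_{\Bz}PRF(\si_0 w)=\bde(\psi^1_c(w))-\bde(\si_0 x,f^{-1}(\si_0 x),\B0)$. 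The identity then follows from $\bde_1(w)=\tfrac1{\si_0}\,\pi_{\Bz}PRF(\si_0 w)$, read off coordinatewise for each $\de^j_1$. This is the same mechanism used for the three-dimensional case in \cite{Nam2}, now with $\Bz\in\R^m$.

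Finally I would differentiate the identity. Applying the chain rule to $\de^j\circ\psi^1_c$ and inserting the coordinate formulas above, the $y$-component $\phi^{-1}(\si_0\,\cdot\,)$ of $\psi^1_c$ contributes $\di_y\de^j\circ\psi^1_c$ times the relevant partial of $\phi^{-1}(\si_0 w)$, while the $\Bz$-components $\de^l\circ\psi^1_v$ contribute $\sum_l\di_{z_l}\de^j\circ\psi^1_c$ times the relevant partial of $\de^l\circ\psi^1_v$; inside the latter, the first coordinate $\phi^{-1}(\si_0\,\cdot\,)$ of $\psi^1_v$ produces a further term proportional to the same partial of $\phi^{-1}(\si_0 w)$, and the two combine into the boxed factor $\di_y\de^j\circ\psi^1_c+\sum_l\di_{z_l}\de^j\circ\psi^1_c\cdot\di_x\de^l\circ\psi^1_v$. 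The leftover pieces are precisely those in the statement: for $\di_x$, the $x$-component $\si_0 x$ of $\psi^1_c$ and the subtracted term yield $\di_x\de^j\circ\psi^1_c-\tfrac{d}{dx}\de^j(\si_0 x,f^{-1}(\si_0 x),\B0)$; for $\di_y$, the components $\si_0 y$ and $\si_0 z_l+\de^l(\si_0 y,f^{-1}(\si_0 y),\B0)$ of $\psi^1_v$ yield $\sum_l\di_{z_l}\de^j\circ\psi^1_c\cdot\big[\di_y\de^l\circ\psi^1_v+\sum_i\di_{z_i}\de^l\circ\psi^1_v\cdot\tfrac{d}{dy}\de^i(\si_0 y,f^{-1}(\si_0 y),\B0)\big]$; and for $\di_{z_i}$, the component $\si_0 z_i$ of $\psi^1_v$ yields $\sum_l\di_{z_l}\de^j\circ\psi^1_c\cdot\di_{z_i}\de^l\circ\psi^1_v$. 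Every internal factor of $\si_0$ introduced by $\La^{-1}$ cancels against the $1/\si_0$ in front of $\bde_1$, and the three formulas drop out. The only real obstacle is the bookkeeping in this chain-rule expansion — tracking which coordinate of $\psi^1_v$ and $\psi^1_c$ depends on which variable, and isolating the doubly-occurring $\phi^{-1}(\si_0\,\cdot\,)$ so that the boxed coefficient comes out exactly right — which is routine once the coordinate identities above are in hand.
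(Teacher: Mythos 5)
Your proposal is correct and follows essentially the same route as the paper's Appendix A proof: both unwind $PRF=H\circ F^2\circ H^{-1}$ to the closed formula $\si_0\,\bde_1(w)=\bde\circ\psi^1_c(w)-\bde(\si_0 x,f^{-1}(\si_0 x),\B0)$ (the paper states it as $\text{Pre}\,\bde_1(w)=\bde\circ F\circ H^{-1}(w)-\bde(x,f^{-1}(x),\B0)$ before rescaling) and then apply the chain rule, with the boxed common factor arising exactly as you say from the two occurrences of $\phi^{-1}(\si_0\,\cdot\,)$ in the coordinates of $\psi^1_c$ and $\psi^1_v$. Your index bookkeeping in the $\di_y$ formula (writing $\di_y\de^l\circ\psi^1_v$ inside the bracket) is in fact the correct one.
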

\begin{proof}
The expression of $ \text{Pre}\,\bde_1 $ is as follows
\ssk
\begin{align*}
\text{Pre}\,\bde_1\:\!(w) & = \bde \circ F \circ H^{-1}(w) - \bde(x,f^{-1}(x),\B0) \\[0.2em]
& = \bde(x,\,\phi^{-1}(w),\,\bde \circ H^{-1}(w)) - \bde(x,f^{-1}(x),\B0) .
\end{align*} 
Recall $ \bde(w) = (\de^1(w),\; \de^2(w),\ldots,\;\de^m(w)) $. The $ j^{th} $ coordinate function of $ \de_1^j(w) $ or $ \text{Pre}\,\de_1^j(w) $ is defined similarly for $ 1 \leq j \leq m $. Let us calculate $ \di_x \text{Pre}\,\de_1^j(w) $ 
\ssk
\begin{align}
&\quad \ \ \di_x \text{Pre}\,\de_1^j(w) \nonumber \\[0.2em]
& = \ \frac{\di}{\di x}\; \big(\,\de^j \circ F \circ H^{-1}(w)\big) - \frac{d}{dx}\; \de^j(x,f^{-1}(x),\B0)  \nonumber \\[0.4em]
& = \ \frac{\di}{\di x}\; \de^j (x,\,\phi^{-1}(w),\, \bde \circ H(w)) - \frac{d}{dx}\; \de^j(x,f^{-1}(x),\B0)  \nonumber \\[0.6em]
& = \ \di_x \de^j \circ (F \circ H^{-1}(w)) + \di_y \de^j \circ (F \circ H^{-1}(w)) \cdot \di_x \phi^{-1}(w)  \nonumber \\
&\qquad + \sum_{l=1}^m \di_{z_l} \de^j \circ (F \circ H^{-1}(w)) \cdot \di_x (\, \de^l \circ H^{-1}(w)) - \frac{d}{dx}\; \de^j(x,f^{-1}(x),\B0)  \nonumber \\[0.6em]
& = \ \di_x \de^j \circ (F \circ H^{-1}(w)) + \di_y \de^j \circ (F \circ H^{-1}(w)) \cdot \di_x \phi^{-1}(w)  \nonumber \\
&\qquad + \sum_{l=1}^m \di_{z_l} \de^j \circ (F \circ H^{-1}(w)) \cdot \di_x \de^l \circ H^{-1}(w) \cdot \di_x \phi^{-1}(w) - \frac{d}{dx}\; \de^j(x,f^{-1}(x),\B0)  \nonumber \\[0.6em]
& = \left[\, \di_y \de^j \circ (F \circ H^{-1}(w)) + \sum_{l=1}^m \di_{z_l} \de^j \circ (F \circ H^{-1}(w)) \cdot \di_x \de^l \circ H^{-1}(w) \,\right] \cdot \di_x \phi^{-1}(w) \label{eq-app-recursive for di-x de-1} \\
& \qquad + \di_x \de^j \circ (F \circ H^{-1}(w)) - \frac{d}{dx}\; \de^j(x,f^{-1}(x),\B0) . \nonumber 
\end{align} \msk
Recall that $ \psi^1_v(w) = H^{-1}(\si_0\:\!w) $ and $ \psi^1_c(w) = F \circ H^{-1}(\si_0\:\!w) $. Since $ \di_x \de^j_1(w) = \di_x \text{Pre}\,\de_1^j( \si_0\:\!w) $, equation \eqref{eq-app-recursive for di-x de-1} implies that \ssk
\begin{equation*}
\begin{aligned}
\di_x \de^j_1(w) &= \Big[\, \di_y \de^j \circ \psi^1_c(w) + \sum_{l=1}^m \di_{z_l} \de^j \circ \psi^1_c(w) \cdot \di_x \de^l \circ \psi^1_v(w) \,\Big] \cdot \di_x \phi^{-1}( \si_0 w) \\[-0.3em]
& \quad \ + \di_x \de^j \circ \psi^1_c(w) - \frac{d}{dx}\,\de^j (\si_0 x,\,f^{-1}(\si_0 x), {\bf 0}) \
\end{aligned} \ssk
\end{equation*}
for each $ 1 \leq j \leq m $. Let us calculate $ \di_y \text{Pre}\,\de_1^j(w) $ \msk
\begin{align*}
&\quad \ \ \di_y \text{Pre}\,\de_1^j(w) \\[0.3em]
& = \ \frac{\di}{\di y}\; \big(\,\de^j \circ F \circ H^{-1}(w)\big) - \frac{d}{dy}\; \de^j(x,f^{-1}(x),\B0) \\[0.3em]
& = \ \frac{\di}{\di y}\; \de^j (x,\,\phi^{-1}(w),\, \bde \circ H(w)) \\
& = \ \di_y \de^j \circ (F \circ H^{-1}(w)) \cdot \di_y \phi^{-1}(w) 
+ \sum_{l=1}^m \di_{z_l} \de^j \circ (F \circ H^{-1}(w)) \cdot \di_y (\, \de^l \circ H^{-1}(w)) \\[0.4em]
& = \ \di_y \de^j \circ (F \circ H^{-1}(w)) \cdot \di_y \phi^{-1}(w) 
+ \sum_{l=1}^m \di_{z_l} \de^j \circ (F \circ H^{-1}(w)) \\[-0.4em]
& \quad \ \cdot \Big[\, \di_x \de^j \circ H^{-1}(w) \cdot \di_y \phi^{-1}(w) + \di_y \de^j \circ H^{-1}(w) + \sum_{i=1}^m \di_{z_i} \de^l \circ H^{-1}(w) \cdot \frac{d}{dy}\;\de^i(y,f^{-1}(y),\B0) \,\Big] \\[0.2em]
& = \ \Big[\, \di_y \de^j \circ (F \circ H^{-1}(w)) + \sum_{l=1}^m \di_{z_l} \de^j \circ (F \circ H^{-1}(w)) \cdot \di_x \de^l \circ H^{-1}(w) \,\Big] \cdot \di_y \phi^{-1}(w) \\[-0.4em]
& \quad \ + \sum_{l=1}^m \di_{z_l} \de^j \circ (F \circ H^{-1}(w)) \cdot \Big[\, \di_y \de^j \circ H^{-1}(w) + \sum_{i=1}^m \di_{z_i} \de^l \circ H^{-1}(w) \cdot \frac{d}{dy}\;\de^i(y,f^{-1}(y),\B0) \,\Big] .
\end{align*} 
Then
\begin{align*}
\di_y \de^j_1(w) &= \Big[\, \di_y \de^j \circ \psi^1_c(w) + \sum_{l=1}^m \di_{z_l} \de^j \circ \psi^1_c(w) \cdot \di_x \de^l \circ \psi^1_v(w) \,\Big] \cdot \di_y \phi^{-1}( \si_0 w) \\[-0.4em]
& \quad \ + \sum_{l=1}^m \di_{z_l} \de^j \circ \psi^1_c(w) \cdot \Big[\, \di_y \de^j \circ \psi^1_v(w) + \sum_{i=1}^m \di_{z_i} \de^l \circ \psi^1_v(w) \cdot \frac{d}{dy}\,\de^i(\si_0y,f^{-1}(\si_0 y), {\bf 0}) \,\Big]
\end{align*} \msk
for each $ 1 \leq j \leq m $. Let us calculate $ \di_{z_i} \text{Pre}\,\de_1^j(w) $ for $ 1 \leq i \leq m $
\begin{align*}
&\quad \ \ \di_{z_i} \text{Pre}\,\de_1^j(w) \\[0.3em]
& = \ \frac{\di}{\di z_i}\; \big(\,\de^j \circ F \circ H^{-1}(w)\big) - \frac{d}{d z_i}\; \de^j(x,f^{-1}(x),\B0) \\[0.3em]
& = \ \frac{\di}{\di z_i}\; \de^j (x,\,\phi^{-1}(w),\, \bde \circ H(w)) \\
& = \ \di_y \de^j \circ (F \circ H^{-1}(w)) \cdot \di_{z_i} \phi^{-1}(w) 
+ \sum_{l=1}^m \di_{z_l} \de^j \circ (F \circ H^{-1}(w)) \cdot \di_{z_i} (\, \de^l \circ H^{-1}(w)) \\[0.4em]
& = \ \di_y \de^j \circ (F \circ H^{-1}(w)) \cdot \di_{z_i} \phi^{-1}(w)  \\ & \qquad 
+ \sum_{l=1}^m \di_{z_l} \de^j \circ (F \circ H^{-1}(w)) 
\cdot \Big[\, \di_x \de^l \circ H^{-1}(w) \cdot \di_{z_i} \phi^{-1}(w) + \di_{z_i} \de^l \circ H^{-1}(w) \,\Big] \\[0.2em]
& = \ \Big[\, \di_y \de^j \circ (F \circ H^{-1}(w)) + \sum_{l=1}^m \di_{z_l} \de^j \circ (F \circ H^{-1}(w)) \cdot \di_x \de^l \circ H^{-1}(w) \,\Big] \cdot \di_{z_i} \phi^{-1}(w) \\[-0.4em]
& \qquad  + \sum_{l=1}^m \di_{z_l} \de^j \circ (F \circ H^{-1}(w)) \cdot \sum_{i=1}^m \di_{z_i} \de^l \circ H^{-1}(w) .
\end{align*} %
\begin{align*}
\di_{z_i} \de^j_1(w) &= \Big[\, \di_y \de^j \circ \psi^1_c(w) + \sum_{l=1}^m \di_{z_l} \de^j \circ \psi^1_c(w) \cdot \di_x \de^l \circ \psi^1_v(w) \,\Big] \cdot \di_{z_i} \phi^{-1}( \si_0 w) \\[-0.4em]
& \quad \ + \sum_{l=1}^m \di_{z_l} \de^j \circ \psi^1_c(w) \cdot \di_{z_i} \de^l \circ \psi^1_v(w)
\end{align*}
for each $ 1 \leq i,\,j \leq m $. Hence, the proof is complete.
\end{proof}

\ssk

\section{Critical point of infinitely renormalizable H\'enon-like map}
The H\'enon-like map $ F $ has the tip if it is infinitely renormalizable. Let $ \tau_k $ be the tip of $ R^kF $ for $ k \in \N $. Define the {\em critical point} of $ R^kF $, $ c_{F_k} $ as $ (R^kF)^{-1}(\tau_k) $. For $ k<n $, define $ \Psi^n_{k,\,\vv} $ and $ \Psi^n_{k,\,\cc} $ as follows
\begin{equation}
\begin{aligned}
\Psi^n_{k,\,\vv} &= \psi^{k+1}_v \circ \psi^{k+2}_v \circ \cdots \circ \psi^n_v \\[0.2em]
\Psi^n_{k,\,\cc} &= \psi^{k+1}_c \circ \psi^{k+2}_c \circ \cdots \circ \psi^n_c .
\end{aligned} \msk
\end{equation}
\nin Recall the equation between renormalized map
$$ F_{j+1} = \big( \psi^{j+1}_v \big)^{-1} \circ F^2_j \circ \psi^{j+1}_v $$
for $ k \leq j < n $. 
\begin{lem} \label{lem-relation between Psi-c and Psi-v}
Let $ F $ be the infinitely renormalizable H\'enon-like map. Then
$$ \Psi^n_{k,\,\vv} \circ F_n = F_k \circ \Psi^n_{k,\,\cc} $$
for $ k < n $.
\end{lem}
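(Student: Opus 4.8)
The plan is to reduce the identity to the one-step relation $\psi^{j+1}_v\circ F_{j+1}=F_j\circ\psi^{j+1}_c$ already proved in Lemma \ref{lem-relation of F-k and F-k-1 with psi} (that lemma, stated for index $k$, reads with $j=k-1$ as $\psi^{j+1}_v\circ F_{j+1}=F_j\circ\psi^{j+1}_c$), and then either telescope the composition or run an induction on $n-k$.

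First I would unwind the definitions \eqref{eq-psi-v,c composition}: $\Psi^n_{k,\,\vv}=\psi^{k+1}_v\circ\psi^{k+2}_v\circ\cdots\circ\psi^n_v$ and $\Psi^n_{k,\,\cc}=\psi^{k+1}_c\circ\cdots\circ\psi^n_c$. Since $F$ is infinitely renormalizable, each $F_j=R^jF$ is renormalizable, so every $\psi^{j+1}_v$ and $\psi^{j+1}_c=F_j\circ\psi^{j+1}_v$ is well defined and the above compositions make sense on $\Dom(R^nF)$; this takes care of all domain bookkeeping.

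The key step is to push the trailing factor $F_n$ leftward through the composition. Starting from $\Psi^n_{k,\,\vv}\circ F_n=\psi^{k+1}_v\circ\cdots\circ\psi^n_v\circ F_n$, apply $\psi^n_v\circ F_n=F_{n-1}\circ\psi^n_c$ to turn the innermost $\psi^n_v$ into $\psi^n_c$ (now sitting on the right) while lowering the $F$-index from $n$ to $n-1$; then apply $\psi^{n-1}_v\circ F_{n-1}=F_{n-2}\circ\psi^{n-1}_c$, and so on, down to $\psi^{k+1}_v\circ F_{k+1}=F_k\circ\psi^{k+1}_c$. After $n-k$ applications all the $v$-maps have become $c$-maps in the same order, and the $F$-factor has become $F_k$ on the left, yielding $F_k\circ\psi^{k+1}_c\circ\cdots\circ\psi^n_c=F_k\circ\Psi^n_{k,\,\cc}$. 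Equivalently, one can write this as an induction on $n-k$: the base case $n=k+1$ is exactly Lemma \ref{lem-relation of F-k and F-k-1 with psi}; for the step, factor $\Psi^n_{k,\,\vv}=\psi^{k+1}_v\circ\Psi^n_{k+1,\,\vv}$, apply the inductive hypothesis at level $k+1$ to get $\Psi^n_{k+1,\,\vv}\circ F_n=F_{k+1}\circ\Psi^n_{k+1,\,\cc}$, then use the base case $\psi^{k+1}_v\circ F_{k+1}=F_k\circ\psi^{k+1}_c$ and recombine $\psi^{k+1}_c\circ\Psi^n_{k+1,\,\cc}=\Psi^n_{k,\,\cc}$.

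I do not expect a genuine obstacle here: no estimates are involved, and the only points requiring care are that each composition is formed on the correct domain — guaranteed by infinite renormalizability — and that the one-step relation is invoked at the matching index at each stage. The mild bookkeeping of keeping the order of the $\psi_c$'s straight while the $F$-index descends is the only thing to watch.
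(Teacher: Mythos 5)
Your proof is correct and is essentially the same argument as the paper's: the paper also telescopes the composition using the one-step conjugacy $\psi^{j+1}_c = F_j\circ\psi^{j+1}_v$ together with $F_j^2\circ\psi^{j+1}_v=\psi^{j+1}_v\circ F_{j+1}$ (the content of Lemma \ref{lem-relation of F-k and F-k-1 with psi}), merely starting from $F_k\circ\Psi^n_{k,\,\cc}$ and pushing the $F$-factor rightward rather than starting from $\Psi^n_{k,\,\vv}\circ F_n$ and pushing it leftward. No gap.
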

\begin{proof}
Recall the equation $ \psi^{j+1}_c = F_j \circ \psi^{j+1}_v $ for $ k \leq j < n $. Thus 
\msk
\begin{align*}
F_k \circ \Psi^n_{k,\,\cc} 
& = F_k \circ \psi^{k+1}_c \circ \psi^{k+2}_c \circ \cdots \circ \psi^n_c \\[0.2em]
& = \big[\,F_k \circ (F_k \circ \psi^{k+1}_v)\,\big] \circ (F_{k+1} \circ \psi^{k+2}_v) \circ \cdots \circ (F_{n-1} \cdot \psi^n_v) \\[0.2em]
& = \psi^{k+1}_v \circ \psi^{k+2}_v \circ F_{k+2} \circ (F_{k+2} \circ \psi^{k+3}_v) \circ \cdots \circ (F_{n-1} \circ \psi^n_v) \\
&  \hspace{1in} \vdots \\[0.2em]
& = \psi^{k+1}_v \circ \psi^{k+2}_v \circ \cdots \circ \psi^n_v \circ F_n \\[0.2em]
& = \Psi^n_{k,\,\vv} \circ F_n
\end{align*}
\end{proof}

\begin{cor} \label{cor-image of critical point under Psi}
Let $ F $ be the infinitely renormalizable H\'enon-like map. Let $ c_{F_j} $ is the critical point of $ F_j $ for $ j \in \N $. Then
$$ c_{F_k} = \Psi^n_{k,\,\cc}(c_{F_n}) $$
for $ k < n $. Moreover, 
$$ \{c_{F_k} \} = \bigcap_{i=k}^{\infty} \Psi^i_{k,\,\cc}(B) . $$
\end{cor}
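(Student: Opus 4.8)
The plan is to obtain both statements as immediate consequences of the commutation relation in Lemma~\ref{lem-relation between Psi-c and Psi-v}, combined with the two facts recalled earlier in the paper: that $F_j(c_{F_j})=\tau_j$ for every $j$ (this is just the definition $c_{F_j}=F_j^{-1}(\tau_j)$), and that $\Psi^n_{k,\,\vv}(\tau_n)=\tau_k$ for $k<n$. Note also that $c_{F_n}=F_n^{-1}(\tau_n)$ lies in $\Dom(F_n)=B(R^nF)$, so $\Psi^n_{k,\,\cc}(c_{F_n})$ makes sense.

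First I would prove the identity $c_{F_k}=\Psi^n_{k,\,\cc}(c_{F_n})$. Since each renormalized map $R^jF$ is a H\'enon-like diffeomorphism, $F_k$ is injective, so it is enough to check $F_k\bigl(\Psi^n_{k,\,\cc}(c_{F_n})\bigr)=\tau_k$. Evaluating Lemma~\ref{lem-relation between Psi-c and Psi-v} at $c_{F_n}$ gives
\begin{equation*}
F_k\circ\Psi^n_{k,\,\cc}(c_{F_n})=\Psi^n_{k,\,\vv}\circ F_n(c_{F_n})=\Psi^n_{k,\,\vv}(\tau_n)=\tau_k ,
\end{equation*}
and applying $F_k^{-1}$ yields $\Psi^n_{k,\,\cc}(c_{F_n})=F_k^{-1}(\tau_k)=c_{F_k}$, as claimed.

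For the second assertion, the identity just proved together with the convention $\Psi^k_{k,\,\cc}=\id$ shows $c_{F_k}=\Psi^i_{k,\,\cc}(c_{F_i})\in\Psi^i_{k,\,\cc}(B)$ for every $i\geq k$, hence $c_{F_k}\in\bigcap_{i\geq k}\Psi^i_{k,\,\cc}(B)$. Conversely, from $\psi^{i+1}_c=F_i\circ\psi^{i+1}_v$ and $\Psi^{i+1}_{k,\,\cc}=\Psi^i_{k,\,\cc}\circ\psi^{i+1}_c$, together with the nesting $\psi^{i+1}_c(B(R^{i+1}F))\subset B(R^iF)$ from the renormalization construction, one sees that the sets $\Psi^i_{k,\,\cc}(B)$ are compact and nested decreasing in $i$; and since $\diam\bigl(\Psi^i_{k,\,\cc}(B)\bigr)\leq C\si^{\,i-k}\to 0$, their intersection is a single point. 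That point must therefore be $c_{F_k}$, giving $\{c_{F_k}\}=\bigcap_{i=k}^{\infty}\Psi^i_{k,\,\cc}(B)$.

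There is no serious obstacle here: the content is entirely carried by Lemma~\ref{lem-relation between Psi-c and Psi-v}, and the remaining work is the routine bookkeeping of the nesting $\Psi^{i+1}_{k,\,\cc}(B)\subset\Psi^i_{k,\,\cc}(B)$ and the exponential diameter decay, both already available from the standing setup for $F\in\II_B(\oeps)$. If anything needs care, it is only making the inclusion $\psi^{i+1}_c(B(R^{i+1}F))\subset B(R^iF)$ explicit, which follows from $\psi^{i+1}_c=F_i\circ\psi^{i+1}_v$ and $\psi^{i+1}_v(B(R^{i+1}F))=B^{i+1}_v\subset B(R^iF)$.
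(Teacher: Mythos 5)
Your proof is correct and follows essentially the same route as the paper: evaluate the commutation relation $\Psi^n_{k,\,\cc}\circ F_n = F_k^{-1}\circ(\Psi^n_{k,\,\vv}\circ F_n)$ at $c_{F_n}$ using $F_n(c_{F_n})=\tau_n$ and $\Psi^n_{k,\,\vv}(\tau_n)=\tau_k$, then deduce the intersection statement from nesting plus the diameter bound $C\si^{\,i-k}$. The only difference is cosmetic (you verify $F_k(\Psi^n_{k,\,\cc}(c_{F_n}))=\tau_k$ and invoke injectivity rather than composing with $F_k^{-1}$ directly, and you spell out the nesting a bit more explicitly).
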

\begin{proof}
Lemma \ref{lem-relation between Psi-c and Psi-v} implies
$$ \Psi^n_{k,\,\vv} \circ F_n(w) = F_k \circ \Psi^n_{k,\,\cc}(w) $$
for $ k<n $. Then
\begin{equation*}
\begin{aligned}
\Psi^n_{k,\,\cc}(c_{F_n}) &= (F_k)^{-1} \circ \Psi^n_{k,\,\vv} \circ F_n(c_{F_n}) \\[0.2em]
&= (F_k)^{-1} \circ \Psi^n_{k,\,\vv} (\tau_n) \\[0.2em]
&= (F_k)^{-1}(\tau_k) \\
&= c_{F_k}
\end{aligned} \ssk
\end{equation*}
By the above equation, $ c_{F_k} \in \Psi^i_{k,\,\cc}(B) $ for every $ i > k $. Furthermore, 
$$ \Psi^{k+1}_{k,\,\cc}(B) \supset \Psi^{k+2}_{k,\,\cc}(B) \supset \cdots $$ 
is the nested sequence by the definition of $ \Psi^n_{k,\,\cc} $. Since $ \diam\, (\Psi^n_{k,\,\cc}) \leq C\si^{n-k} $ where $ \si $ is the universal scaling factor for $ k < n $ and for some $ C>0 $. Then 
$$ \lim_{ i \ra \infty} \Psi^i_{k,\,\cc}(B) = \{ c_{F_k} \}. $$
Hence, the intersection of $ \Psi^i_{k,\,\cc}(B) $ for all $ i > k $ is the set of critical point of $ F_k $.
\end{proof}



\end{document}